\def\kn{\kern.1em}
\theoremstyle{definition}
\definecolor{midgrey}{RGB}{150,173,180}
\DeclarePairedDelimiter{\ceil}{\lceil}{\rceil}
\numberwithin{equation}{section}
\newcommand{\komp}      {{^\prime}}
\newcommand{\kompM}[1]{^{{\prime^{\mkern-4mu^{_{#1}}}}}}
\newcommand{\nega}      [1] {{#1}\komp}
\newcommand{\negaM}      [2] {{#2}\kompM{#1}}
\newcommand{\te}{{\mathbin{*\mkern-9mu \circ}}}
\newcommand{\ted}{\mathbin{\diamond}}
\newcommand{\tebeta}{\mathbin{\te_\beta}}
\newcommand{\tegamma}{\mathbin{\te_\gamma}}
\newcommand{\ite}[1]{\mathbin{\rightarrow_{#1}}}
\newcommand{\g}                 [2] {{#1} \mathbin{\te} {#2}}
\newcommand{\gstar}               [2] {{#1} \mathbin{\star} {#2}}
\newcommand{\gd}                [2] {{#1} \ted {#2}}
\newcommand{\gbeta}             [2] {\mathbin{{#1} \tebeta {#2}}}
\newcommand{\ggamma}             [2] {\mathbin{{#1} \tegamma {#2}}}
\newcommand{\res}               [3] {{#2}\mathbin{\ite{#1}}{#3}}
\newcommand{\lex}                                        {\overset{\rightarrow}{\times}}
\newcommand{\lexADD}                            {\mathbin{\overset{\rightarrow}{\oplus}}}
\newcommand{\Twoheadrightarrow}               {\rightarrow \mkern-23.25mu \rightharpoonup}
\newcommand{\threeheadrightarrow}               {\rightarrow \mkern-12.5mu \rightarrow}
\newcommand{\plexII}               {\overset{\Twoheadrightarrow}{\times}}
\newcommand{\plexI}               {\overset{\threeheadrightarrow}{\times}}
\newcommand{\PLPI}             [3] {{#1}_{{#2}}\plexI{{#3}}}
\newcommand{\PLPII}            [2] {{#1}\plexII{{#2}}}
\newcommand{\PLPIs}            [4] {{#1}_{{\left({#2}\plexI{#3}\right)}_{#4}}}
\newcommand{\PLPIIs}            [4] {{#1}_{{\left({#2}\plexII{#3}\right)}_{#4}}}
\newcommand{\PLPIII}            [4] {{#1}_{{#2}_{#3}}\plexI{{#4}}}
\newcommand{\PLPIV}            [3] {{#1}_{#2}\plexII{{#3}}}
\newtheorem{theorem}{Theorem}[section]
\newtheorem{lemma}[theorem]{Lemma}
\theoremstyle{definition}
\newtheorem{definition}[theorem]{Definition}
\newtheorem{example}[theorem]{Example}
\newtheorem{proposition}[theorem]{Proposition}
\newtheorem{corollary}[theorem]{Corollary}
\theoremstyle{remark}
\newtheorem{remark}[theorem]{Remark}
\numberwithin{equation}{section}
\begin{document}

\setcounter{page}{1}     

\AuthorTitle{S\'andor Jenei}{The Hahn embedding theorem for a class of residuated semigroups}
\footnote{Institute of Mathematics and Informatics, University of P\'ecs, H-7624 P\'ecs, Ifj\'us\'ag u. 6., Hungary, jenei@ttk.pte.hu}




\PresentedReceived{This paper is \cite{Jenei_Hahn} modified by the corrections of \cite{Jenei_Hahn_err}.}{}






\begin{abstract}
Hahn's embedding theorem asserts that linearly ordered abelian groups embed in some lexicographic product of real groups. Hahn's theorem is generalized to a class of residuated semigroups in this paper, namely, to odd involutive commutative residuated chains which possess only finitely many idempotent elements. To this end, the partial sublex product construction is introduced to construct new odd involutive commutative residuated lattices from a pair of odd involutive commutative residuated lattices, and a representation theorem for odd involutive commutative residuated chains which possess only finitely many idempotent elements, by means of linearly ordered abelian groups and the partial sublex product construction is presented.
\end{abstract}

\Keywords{Involutive residuated lattices, construction, representation, abelian groups, Hahn-type embedding}

\section{Introduction}

Hahn's celebrated embedding theorem states that every linearly ordered abelian group $G$ can be embedded as an ordered subgroup into the Hahn product 
$\displaystyle{\overset{\rightarrow_H}{\times_\Omega}}\ \mathbb R$, where 
$\mathbb R$ is the additive group of real numbers (with its standard order), 
$\Omega$ is the set of archimedean equivalence classes of $G$ (ordered naturally by the dominance relation),
${\overset{\rightarrow_H}{\times_\Omega}}\ \mathbb R$ is the set of all functions from $\Omega$ to $\mathbb R$ (alternatively the set of all vectors with real elements and with coordinates taken from $\Omega$) 
which vanish outside a well-ordered set, 
endowed with a lexicographical order
\cite{hahn}. 
Briefly, every linearly ordered abelian group can be represented as a group of real-valued functions on a totally ordered set.
By weakening the linearly ordered hypothesis, Conrad, Harvey, and Holland generalized Hahn's theorem for lattice-ordered abelian groups in \cite{CHH} by showing that any abelian $\ell$-group can be represented as a group of real-valued functions on a partially ordered set. 
By weakening the hypothesis on the existence of the inverse element but keeping the linear order, in this paper Hahn's theorem will be generalized to a class of residuated semigroups, namely to linearly ordered odd involutive commutative residuated lattices which possess only finitely many idempotent elements.
The generalization will be a by-product of a representation theorem for odd involutive commutative residuated chains which possess only finitely many idempotent elements, by means of the partial sub-lexicographic product construction and using only linearly ordered abelian groups.
Thus, the price for not having inverses in our semigroup framework is that our embedding is made into partial sub-lexicographic products, introduced here, rather than lexicographic ones. The building blocks, which are linearly ordered abelian groups, remain the same. 

Residuation is a basic concept in mathematics \cite{residuation theory} with strong connections to Galois maps \cite{Galoisbook} and closure operators. Residuated semigroups have been introduced in the 1930s by Ward and Dilworth \cite{WD39} to investigate ideal theory of commutative rings with unit.
Recently the investigation of residuated lattices (that is, residuated monoids on lattices) has become quite intense, initiated by the discovery of the strong connection between residuated lattices and substructural logics \cite{gjko} via an algebraic logic link \cite{BlokPigozzi}.
Substructural logics encompass among many others, Classical logic, Intuitionistic logic, \L ukasiewicz logic, Abelian logic, Relevance logics, Basic fuzzy logic, Monoidal $t$-norm logic, Full Lambek calculus, Linear logic, many-valued logics, mathematical fuzzy logics, along with their non-com\-mu\-ta\-tive versions. 
The theory of substructural logics has put all these logics, along with many others, under the same motivational and methodological umbrella, and residuated lattices themselves have been the key component in this remarkable  unification.
Examples of residuated lattices include Boolean algebras, Heyting algebras \cite{heyting}, MV-algebras \cite{cigmun}, BL-algebras \cite{hajekbook}, and lattice-ordered groups, to name a few, a variety of other algebraic structures can be rendered as residuated lattices. Applications of substructural logics and residuated lattices span across proof theory, algebra, and computer science.

As for the structural description of classes of residuated lattices, 
non-integral residuated structures, and consequently substructural logics without the weakening rule, are far less understood at present than their integral counterparts. 
Therefore, some authors try to establish category equivalences between integral and non-integral residuated structures to gain a better understanding of the non-integral case, and in particular, to gain a better understanding of substructural logics without weakening \cite{GalRaf,Gal2015}.
Despite the extensive literature devoted to classes of residuated lattices, there are still very few results that effectively describe their structure, and many of these effective descriptions postulate, besides integrality, the naturally ordered condition\footnote{Its dual notion is often called divisibility.}, too \cite{AglMon,cigmun,GenMV,hajekbook,JF,JKpsBCK,lawson,Mos57}.
Linearly ordered odd involutive commutative residuated lattices (another terminology is odd involutive FL$_e$-chains) are non-integral and not naturally ordered, unless they are trivial.
Therefore, from a general point of view, our study is a contribution to the structural description of residuated structures which are neither naturally ordered nor integral.


\begin{definition}\label{FLe}\rm
An {\em FL$_e$-algebra} 
is a structure $( X, \wedge,\vee, \te, \ite{\te}, t, f )$ such that 
$(X, \wedge,\vee )$ is a lattice\footnote{Sometimes the lattice operators are replaced by their induced ordering $\leq$ in the signature, in particular, if an FL$_e$-{\em chain} is considered, that is, if the ordering is linear.}, $( X,\leq, \te,t)$ is a commutative, 
residuated monoid\footnote{We use the word monoid to mean semigroup with unit element.}, and $f$ is an arbitrary constant.
Here being {\em residuated} means that there exists a binary operation $\ite{\te}$,
called the residual operation of $\te$, such that 
$$
\mbox{
$\g{x}{y}\leq z$ if and only if $\res{\te}{x}{z}\geq y$.
}
$$
This equivalence is called adjointness condition and ($\te,\ite{\te}$) is called an adjoint pair. Equivalently, for any $x,z\in X$, the set $\{v\ | \ \g{x}{v}\leq z\}$ has its greatest element, and $\res{\te}{x}{z}$, the so-called residuum of $x$ and $z$, is defined to be this greatest element: 
$$
\mbox{
$\res{\te}{x}{z}:=\max\{v\ | \ \g{x}{v}\leq z\}$.
}
$$
This is called the residuation property. 
Easy consequences of this definition are the exchange property
$$
\res{\te}{(\g{x}{y})}{z}=\res{\te}{x}{(\res{\te}{y}{z})},
$$
that $\ite{\te}$ is isotone at its second argument,
and that $\te$ distributes over arbitrary joins.
One defines $\nega{x}=\res{\te}{x}{f}$ and calls an FL$_e$-algebra {\em involutive} if $\nega{(\nega{x})}=x$ holds.
We say that the rank of an involutive FL$_e$-algebra is positive if $t>f$, negative if $t<f$, and $0$ if $t=f$. In the zero rank case we also say that the involutive FL$_e$-algebra is {\em odd}.
Denote the set of {\em positive} (resp.\,{\em negative}) elements of $X$ by $X^+=\{x\in X : x\geq t\}$ (resp.\, $X^-=\{x\in X : x\leq t\}$), and call the elements of $X^+$ different from $t$ {\em strictly positive}.
We call the FL$_e$-algebra {\em conical} if all elements of $X$ are comparable with $t$.
If the algebra is linearly ordered we speak about FL$_e$-{\em chains}.
Algebras will be denoted by bold capital letters, their underlying set by the same regular letter.
Commutative residuated lattices are exactly the $f$-free reducts of FL$_e$-algebras.
\end{definition}

\begin{definition}\rm\label{LexRendDefi}
The lexicographic product of two linearly ordered sets 
$\mathbf A=(A,\leq_1)$ and $\mathbf B=(B,\leq_2)$
is a linearly order set 
$\mathbf A\lex\mathbf B=(A\times B,\leq)$, where $A\times B$ is the Cartesian product of $A$ and $B$, and $\leq$ is defined by $\langle a_1,b_1 \rangle\leq\langle a_2,b_2 \rangle$ if and only if $a_1<_1 a_2$ or $a_1=a_2$ and  $b_1\leq_2 b_2$.
The lexicographic product $\mathbf A\lex\mathbf B$ of two FL$_e$-chains $\mathbf A$ and $\mathbf B$ is an FL$_e$-chain over the lexicographic product of their respective universes such that all operations are defined coordinatewise.

\end{definition}
\noindent
We shall view such a lexicographic product 
as an {\em enlargement}: each element of $A$ is replaced by a whole copy of $B$.
Accordingly, in Section~\ref{SectPLPs} by a {\em partial} lexicographic product of two linearly ordered sets 
we will mean a kind of partial enlargement: only {\em some} elements of the first algebra will be replaced by a whole copy of the second algebra. 

\medskip
In any involutive FL$_e$-algebra $\komp$ is an order reversing involution of the underlying set, and $\komp$ has a single fixed point if the algebra is odd.
Hence in odd involutive FL$_e$-algebras $x\mapsto \nega{x}$ is somewhat reminiscent to a reflection operation across a point in geometry, yielding a symmetry of order two with a single fixed point.
In this sense $\nega{t}=f$ means that the position of the two constants is symmetric in the lattice. 
Thus, an extreme situation is the integral case, when $t$ is the top element of $X$ and hence $f$ is its bottom element.
This case has been deeply studied in the literature.
The other extreme situation, when \lq\lq the two constants are in both the middle of the lattice\rq\rq\, (that is, when $t=f$ and so the algebra is odd) is a much less studied scenario.
We remark that $t<f$ can also hold in some involutive FL$_e$-algebras. However, there is no third extremal situation, as it cannot be the case that $f$ is the top element of $X$ and $t$ is its bottom element: For any residuated lattice with bottom element, the bottom element has to be the annihilator of the algebra, but $t$ is its unit element, a contradiction unless the algebra is trivial.

Prominent examples of odd involutive FL$_e$-algebras are odd Sugihara monoids and lattice-ordered abe\-lian groups.
The latter constitutes an algebraic semantics of Abelian Logic \cite{MeyerAbelian,cesari,AbelianNow} while the former constitutes an algebraic semantics of $\mathbf{IUML}^*$, which is a logic at the intersection of relevance logic and many-valued logic \cite{GalRaf}.
These two examples represent two extreme situations from another viewpoint: There is a single idempotent element in any lattice-ordered abelian group, whereas all elements are idempotent in any odd Sugihara monoid. The scope of our investigation lies in between these two extremes; we shall assume that the number of idempotent elements of the odd involutive FL$_e$-chain is finite.
For this class a representation theorem along with some corollaries (e.g. the generalization of Hahn's theorem) will be presented in this paper. 

\section{Odd involutive FL$_e$-algebras vs.\ partially ordered abelian groups}
We start with three preliminary observations, the first of which is being folklore (we present its proof, too, to keep the paper self contained).
\begin{proposition}
For any involutive FL$_e$-algebra $( X, \wedge, \vee, \te, \ite{\te}, t, f )$ the following hold true.
\begin{equation}\label{eq_quasi_inverse}
\res{\te}{x}{y} = \nega{\left(\g{x}{\nega{y}}\right)},
\end{equation}
If $t\geq f$ then
\begin{equation}\label{eq_feltukrozes}
\g{x}{y}\leq\nega{\left(\g{\nega{x}}{\nega{y}}\right)} .
\end{equation}
If $t\leq f$ and the algebra is conical then $y_1>y$ implies
\begin{equation}\label{eq_feltukrozes_CS}
\nega{(\g{\nega{x}}{\nega{y}})}\leq\g{x}{y_1} .
\end{equation}
\end{proposition}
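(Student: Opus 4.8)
The plan is to isolate one algebraic identity and then ride it through all three parts. I would obtain (I) directly from the two facts recorded after Definition~\ref{FLe}: by the exchange property $\res{\te}{(\g{x}{\nega{y}})}{f}=\res{\te}{x}{(\res{\te}{\nega{y}}{f})}$, where the inner residual equals $\nega{(\nega{y})}=y$ by involutivity; the left-hand side is by definition $\nega{(\g{x}{\nega{y}})}$ and the right-hand side is $\res{\te}{x}{y}$, giving (I) with no case split. The single tool for (II) and (III) is the equivalence that $a\leq\nega{b}$ holds iff $\g{a}{b}\leq f$: this is immediate from $\nega{b}=\res{\te}{b}{f}$, adjointness, and commutativity, and it converts any inequality between an element and a negation into the assertion that a product lies below $f$.

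For (II) I would apply this equivalence with $a=\g{x}{y}$ and $b=\g{\nega{x}}{\nega{y}}$, so that it suffices to show $\g{(\g{x}{y})}{(\g{\nega{x}}{\nega{y}})}\leq f$. By commutativity and associativity this product equals $\g{(\g{x}{\nega{x}})}{(\g{y}{\nega{y}})}$, and since $\g{z}{\nega{z}}=\g{z}{(\res{\te}{z}{f})}\leq f$ for every $z$ straight from adjointness, it is $\leq\g{f}{f}$. Finally $t\geq f$ gives $\g{f}{f}\leq\g{f}{t}=f$ by isotonicity of $\te$, which is (II).

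For (III) the reduction is the same in spirit. Rewriting the left side by (I) as $\res{\te}{\nega{x}}{y}$, and noting that (I) also gives $\g{x}{y_1}=\nega{(\res{\te}{x}{\nega{y_1}})}$, the equivalence turns the goal into the single inequality $\g{(\res{\te}{\nega{x}}{y})}{(\res{\te}{x}{\nega{y_1}})}\leq f$. Writing $P=\res{\te}{\nega{x}}{y}$ and $Q=\res{\te}{x}{\nega{y_1}}$, the residuation property yields $\g{\nega{x}}{P}\leq y$ and $\g{x}{Q}\leq\nega{y_1}$; multiplying and regrouping gives $\g{(\g{\nega{x}}{x})}{(\g{P}{Q})}\leq\g{y}{\nega{y_1}}$. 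Since $y_1>y$ forces $\nega{y_1}<\nega{y}$, isotonicity gives $\g{y}{\nega{y_1}}\leq\g{y}{\nega{y}}\leq f$, so with $m\defeq\g{\nega{x}}{x}\leq f$ we reach $\g{m}{(\g{P}{Q})}\leq f$. Conicality now enters: $m$ is comparable with $t$, and if $m\geq t$ then $\g{P}{Q}=\g{t}{(\g{P}{Q})}\leq\g{m}{(\g{P}{Q})}\leq f$, settling this case.

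The main obstacle is the complementary case $m<t$ (still with $m\leq f$), which is exactly the non-group, ``antipodal'' behaviour that cannot arise for lattice-ordered groups. Here $\te$ is not cancellative, so the factor $m$ cannot be stripped from $\g{m}{(\g{P}{Q})}\leq f$, and this is precisely where the strict hypothesis $y_1>y$ must be spent: moving strictly above $y$ drops $Q=\res{\te}{x}{\nega{y_1}}$ strictly below the value $\res{\te}{x}{\nega{y}}$ at which $\g{P}{Q}\leq f$ would fail, and the resulting order-jump of $\te$ furnishes the missing slack. I expect to formalise this by contraposition inside the chain, assuming $\g{x}{y_1}<P$, deducing $\g{m}{y_1}\leq y<y_1$, and then exploiting conicality of $x$ (the subcase $x\geq t$, in which $m\leq f$ may lie strictly below $t$, being the hard one) together with $y_1>y$ to force a contradiction. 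Making this non-cancellative order-jump argument precise is the crux of the whole proposition.
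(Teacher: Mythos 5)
Your derivations of (\ref{eq_quasi_inverse}) and (\ref{eq_feltukrozes}) are correct and are essentially the paper's own: exchange plus involutivity for the first, and the regrouping $\g{(\g{x}{y})}{(\g{\nega{x}}{\nega{y}})}\leq\g{f}{f}\leq\g{t}{f}=f$ followed by adjointness for the second.

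For (\ref{eq_feltukrozes_CS}) there is a genuine gap, and you have located it yourself. After reducing the goal to $\g{m}{(\g{P}{Q})}\leq f$ with $m=\g{x}{\nega{x}}$, you can only conclude when $m\geq t$. The complementary case $m<t$ is not a fringe case: by (\ref{eq_quasi_inverse}), $m=\nega{\tau(x)}$, so $m<t$ holds exactly when $\tau(x)>f$ -- in particular, in the odd conical case ($t=f$) it holds for \emph{every} non-invertible $x$ (cf.\ Theorem~\ref{csoportLesz}; e.g.\ in the chain of Example~3.1 take $x$ to be the strictly positive idempotent). Since $\te$ is not cancellative there, the factor $m$ cannot be stripped, and your fallback sketch does not repair this: it argues ``by contraposition inside the chain'', but linearity is not a hypothesis here -- conicality only gives comparability with $t$ (equivalently with $f$), not comparability of arbitrary pairs such as $P$ and $\g{x}{y_1}$. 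So the crux case of the proposition remains open in your proposal.

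The paper closes this case by spending the strictness of $y_1>y$ in a different place: not to cancel the defect factor $m\leq f$ from below, but to manufacture a \emph{positive multiplier} from above. By residuation, $y_1>y=\res{\te}{\nega{y}}{f}$ gives $\g{y_1}{\nega{y}}\not\leq f$, and conicality (every element is comparable with $f$, because $\komp$ is order-reversing and $\nega{f}=t$) upgrades this to $\g{y_1}{\nega{y}}>f\geq t$. Writing $\nega{(\g{x}{y_1})}=\res{\te}{y_1}{\nega{x}}$ by (\ref{eq_quasi_inverse}), one then gets
$$
\nega{(\g{x}{y_1})}=\g{\nega{(\g{x}{y_1})}}{t}\leq\g{\nega{(\g{x}{y_1})}}{(\g{y_1}{\nega{y}})}=\g{\left(\g{y_1}{(\res{\te}{y_1}{\nega{x}})}\right)}{\nega{y}}\leq\g{\nega{x}}{\nega{y}},
$$
the last step because $\g{y_1}{(\res{\te}{y_1}{\nega{x}})}\leq\nega{x}$ by residuation; applying the order-reversing involution yields (\ref{eq_feltukrozes_CS}), with no cancellation and no case split on $\g{x}{\nega{x}}$. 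The same idea also finishes your own reduction: the bound $\nega{(\g{x}{y_1})}\leq\g{\nega{x}}{\nega{y}}$ is exactly $Q\leq\g{\nega{x}}{\nega{y}}$, whence $\g{P}{Q}\leq\g{(\g{\nega{x}}{P})}{\nega{y}}\leq\g{y}{\nega{y}}\leq f$, which was your target inequality.
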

\begin{proof}
Using that $\komp$ is an involution one easily obtains 
$\nega{(\g{x}{\nega{y}})}=\res{\te}{\left(\g{x}{\nega{y}}\right)}{f}=\res{\te}{x}{\left( \res{\te}{\nega{y}}{f} \right)}=\res{\te}{x}{y}$.
To prove (\ref{eq_feltukrozes}) one proceeds as follows:
$\g{(\g{x}{y})}{(\g{\nega{x}}{\nega{y}})}=
\g{[\g{x}{(\res{\te}{x}{f})}]}{[\g{y}{(\res{\te}{y}{f})}]}
\leq \g{f}{f}
\leq \g{t}{f}=f$,
 hence $\g{x}{y} \leq \res{\te}{(\g{\nega{x}}{\nega{y}})}{f}$.
To show (\ref{eq_feltukrozes_CS}) we proceed as follows.
Since the algebra is involutive $\nega{t}=f$ holds.
Since  the algebra is conical, every element is comparable with $f$, too.
Indeed, for any $a\in X$, if $a$ were not compatible with $f$ then since $\komp$ is an order reversing involution, $\nega{a}$ were not compatible with $\nega{f}=t$, which is a contradiction.
Hence, by residuation\footnote{Throughout the paper when we say in a proof \lq by residuation\rq\ or \lq by adjointness\rq\ 
we refer to the residuation property and the adjointness property, respectively.}, $y_1>y=\nega{(\nega{y})}=\res{\te}{\nega{y}}{f}$ implies 
$\g{y_1}{\nega{y}}\not\leq f$, that is, $\g{y_1}{\nega{y}}>f\geq t$.
Therefore, 
$\nega{(\g{x}{y_1})}=\g{\nega{(\g{x}{y_1})}}{t}\leq\g{\nega{(\g{x}{y_1})}}{\g{y_1}{\nega{y}}}=\g{(\g{y_1}{(\res{\te}{y_1}{\nega{x}})})}{\nega{y}}\leq\g{\nega{x}}{\nega{y}}$ follows by using (\ref{eq_quasi_inverse}).
\end{proof}
In our investigations a crucial role will be played by the $\tau$ function.
\begin{definition}\label{def sk}{\bf ($\tau$)}
\rm
For an involutive FL$_e$-algebra $( X, \wedge, \vee, \te, \ite{\te}, t, f )$, for $x\in X$, define
$\tau(x)$ to be the greatest element of $Stab_x=\{ z\in X \ | \ \g{z}{x}=x \}$, the stabilizer set of $x$.
Since $t$ is the unit element of $\te$, $Stab_x$ is nonempty.
Since $\te$ is residuated, the greatest element of $Stab_x$ exists, and it holds true that
\begin{equation}\label{tnelnagyobb}
\g{\tau(x)}{x}=x \ \ \ \mbox{and} \ \ \  \tau(x)=\res{\te}{x}{x}\geq t. 
\end{equation}
\end{definition}
\begin{proposition}\label{zeta_1}
For an involutive FL$_e$-algebra $( X,  \wedge, \vee, \te, \ite{\te}, t, f )$ the following holds true.
\begin{enumerate}

\item\label{FENTszimmetrikus}
$\tau(x)=\tau(\nega{x})$.

\item\label{item_zetavege_idempotens} $\tau(\tau(x))=\tau(x)$.

\item\label{lemasoljaGENzeta} $\tau(\g{x}{y})\geq\tau(x)$. 
\item\label{ZetaOfIdempotent} $u\in X^+$ is idempotent if and only if $\tau(u)=u$.
\item\label{zetaRange_idempotent}
$Range(\tau)=\{ \tau(x): x\in X \}$ is equal to the set of idempotent elements in $X^+$. 
\item \label{item_boundary_zeta}
If the order is linear then for $x\in X^+$, $\tau(x)\leq x$ holds.
\end{enumerate}
\end{proposition}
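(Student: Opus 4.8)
The unifying strategy is to exploit the two descriptions of $\tau$ from its definition: $\tau(x)$ is the greatest element of the stabilizer $Stab_x=\{z : \g{z}{x}=x\}$, and at the same time $\tau(x)=\res{\te}{x}{x}\geq t$ by (\ref{tnelnagyobb}). The first description turns every desired lower bound $\tau(a)\geq b$ into the single verification $\g{b}{a}=a$, i.e.\ $b\in Stab_a$; the second, together with (\ref{eq_quasi_inverse}), lets me rewrite residua as negated products. Several items are then immediate. For item~(\ref{FENTszimmetrikus}) I would write $\tau(x)=\res{\te}{x}{x}=\nega{(\g{x}{\nega{x}})}$ using (\ref{eq_quasi_inverse}), compute $\tau(\nega{x})=\nega{(\g{\nega{x}}{x})}$ the same way, and conclude by commutativity of $\te$. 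For item~(\ref{lemasoljaGENzeta}) I would simply check $\g{\tau(x)}{(\g{x}{y})}=\g{(\g{\tau(x)}{x})}{y}=\g{x}{y}$, so that $\tau(x)\in Stab_{\g{x}{y}}$ and hence $\tau(x)\leq\tau(\g{x}{y})$.

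The engine behind the remaining items is that $\tau(x)$ is idempotent, which I would establish first: from $\g{\tau(x)}{x}=x$ I get $\g{(\g{\tau(x)}{\tau(x)})}{x}=\g{\tau(x)}{(\g{\tau(x)}{x})}=\g{\tau(x)}{x}=x$, so $\g{\tau(x)}{\tau(x)}\in Stab_x$ and thus $\g{\tau(x)}{\tau(x)}\leq\tau(x)$, while $\tau(x)\geq t$ and isotonicity of $\te$ give the reverse inequality. Granting idempotency, item~(\ref{item_zetavege_idempotens}) follows from the stabilizer picture: $\tau(x)\in Stab_{\tau(x)}$ yields $\tau(\tau(x))\geq\tau(x)$, while any $z\in Stab_{\tau(x)}$ satisfies $\g{z}{x}=\g{z}{(\g{\tau(x)}{x})}=\g{(\g{z}{\tau(x)})}{x}=\g{\tau(x)}{x}=x$, so $z\in Stab_x$ and $z\leq\tau(x)$, which gives the reverse bound. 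Item~(\ref{ZetaOfIdempotent}) I would split: if $\tau(u)=u$ then $\g{u}{u}=\g{\tau(u)}{u}=u$ by (\ref{tnelnagyobb}); conversely, if $u\in X^+$ is idempotent then $u\in Stab_u$ forces $\tau(u)\geq u$, while $u\geq t$ gives $u=\g{\tau(u)}{u}\geq\g{\tau(u)}{t}=\tau(u)$, whence $\tau(u)=u$. Item~(\ref{zetaRange_idempotent}) is then a direct corollary: idempotency together with $\tau(x)\geq t$ shows every value of $\tau$ is an idempotent element of $X^+$, and item~(\ref{ZetaOfIdempotent}) shows every such element is fixed by $\tau$, hence attained.

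The one place where linear order is genuinely needed, and the step I expect to be the main obstacle, is item~(\ref{item_boundary_zeta}). I plan to argue by contradiction, using linearity precisely to negate the conclusion: if $x\in X^+$ but $\tau(x)\not\leq x$, then $\tau(x)>x$. From $x\leq\tau(x)$ and isotonicity I obtain $\g{x}{x}\leq\g{\tau(x)}{x}=x$, while $x\geq t$ gives $\g{x}{x}\geq\g{x}{t}=x$; hence $x$ is idempotent. But then item~(\ref{ZetaOfIdempotent}) forces $\tau(x)=x$, contradicting $\tau(x)>x$. The delicate point is exactly that the implication ``$\tau(x)\not\leq x$ gives $\tau(x)>x$'' is available only under linearity, so I would flag that this is where the chain hypothesis enters and that the other five items hold in the general (non-linear) case as well.
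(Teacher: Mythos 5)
Your proof is correct, and on items~(\ref{FENTszimmetrikus}), (\ref{lemasoljaGENzeta}), (\ref{zetaRange_idempotent}) and (\ref{item_boundary_zeta}) it is essentially the paper's argument. The genuine divergence is your engine for items~(\ref{item_zetavege_idempotens}) and (\ref{ZetaOfIdempotent}): you first prove that $\tau(x)$ is idempotent purely from the stabilizer description ($\g{\left(\g{\tau(x)}{\tau(x)}\right)}{x}=x$ gives $\g{\tau(x)}{\tau(x)}\leq\tau(x)$, and isotonicity with $\tau(x)\geq t$ gives the converse), and then obtain $\tau(\tau(x))=\tau(x)$ by showing $Stab_{\tau(x)}\subseteq Stab_x$. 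The paper instead routes item~(\ref{item_zetavege_idempotens}) through the involution: by (\ref{eq_quasi_inverse}), $\res{\te}{x}{x}=\tau(x)$ is equivalent to $\g{x}{\nega{x}}=\nega{\tau(x)}$, whence $\g{\tau(x)}{\nega{\tau(x)}}=\nega{\tau(x)}$ and $\tau(\tau(x))=\nega{\nega{\tau(x)}}=\tau(x)$. What your route buys is generality: your proofs of items~(\ref{item_zetavege_idempotens})--(\ref{zetaRange_idempotent}) never invoke involutivity, so they remain valid in any commutative residuated lattice, whereas the paper's computation does not; likewise your version of the converse in item~(\ref{ZetaOfIdempotent}) ($u=\g{\tau(u)}{u}\geq\g{\tau(u)}{t}=\tau(u)$) is cleaner than the paper's argument, which excludes $z>u$ from $Stab_u$.

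Two small remarks. First, in the direction ``$\tau(u)=u$ implies $u\in X^+$ is idempotent'' of item~(\ref{ZetaOfIdempotent}) you should also record that $u=\tau(u)\geq t$, which is immediate from (\ref{tnelnagyobb}), the very equation you cite. Second, your worry about item~(\ref{item_boundary_zeta}) is misplaced: the inequality you prove inside item~(\ref{ZetaOfIdempotent}), namely $x=\g{\tau(x)}{x}\geq\g{\tau(x)}{t}=\tau(x)$ for any $x\geq t$, uses neither idempotency nor linearity, so it already proves item~(\ref{item_boundary_zeta}) directly; the contradiction argument via ``$\tau(x)\not\leq x$ implies $\tau(x)>x$'' (which the paper also uses) is where linearity enters, but the conclusion in fact holds in arbitrary, not necessarily linearly ordered, involutive FL$_e$-algebras.
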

\begin{proof}
\begin{enumerate}
\item
By (\ref{eq_quasi_inverse}), 
$\tau(x)=\res{\te}{x}{x}=\nega{(\g{x}{\nega{x}})}=\nega{(\g{\nega{x}}{\nega{\nega{x}}})}=\res{\te}{\nega{x}}{\nega{x}}=\tau(\nega{x})$.
\item
By (\ref{eq_quasi_inverse}), $\res{\te}{x}{x}=\tau(x)$ is equivalent to $x\te\nega{x}=\nega{\tau(x)}$.
Hence, $\tau(x)\te\nega{\tau(x)}$ $=\tau(x)\te (x\te\nega{x})=(\tau(x)\te x)\te \nega{x}=x\te\nega{x}=\nega{\tau(x)}$ follows.
Hence, $\tau(\tau(x))=\res{\te}{\tau(x)}{\tau(x)}=\nega{(\g{\tau(x)}{\nega{\tau(x)}})}=\nega{\nega{\tau(x)}}=\tau(x)$ follows by (\ref{eq_quasi_inverse}).
\item 
If $\g{u}{x}=x$ then $\g{u}{(\g{x}{y})}=\g{(\g{u}{x})}{y}=\g{x}{y}$, that is, $Stab_x\subseteq Stab_{\g{x}{y}}$.
\item
If $u\geq t$ is idempotent then from $\g{u}{u}=u$, $\res{\te}{u}{u}\geq u$ follows by adjointness. But for any $z>u$, $\g{u}{z}\geq\g{t}{z}=z>u$, 
hence $\tau(u)=u$ follows. 
On the other hand, $\tau(u)=u$ implies $u\geq t$ by (\ref{tnelnagyobb}), and also the idempotency of $u$ since $\g{u}{u}=\g{u}{\tau(u)}=u$.
\item
If $u>t$ is idempotent then claim~(\ref{ZetaOfIdempotent}) shows that $u$ is in the range of $\tau$.
If $u$ is in the range of $\tau$, that is $\tau(x)=u$ for some $x\in X$ then claim~(\ref{item_zetavege_idempotens}) implies $\tau(u)=u$, hence $u$ is an idempotent in $X^+$ by claim~(\ref{ZetaOfIdempotent}).
\item
Since the order is linear, the opposite of the statement is $\tau(x)>x$, but it yields
$\g{x}{\tau(x)}\geq\g{t}{\tau(x)}=\tau(x)>x$, a contradiction to (\ref{tnelnagyobb}).
\end{enumerate}
\end{proof}
The notion of odd involutive FL$_e$-algebras has been defined with respect to the  general notion of residuated lattices by adding further postulates (such as commutativity, an extra constant $f$, involutivity, and the $t=f$ property).
The following theorem relates odd involutive FL$_e$-algebras to (in the setting of residuated lattices, very specific) lattice-ordered abelian groups, thus picturing their precise interrelation. In addition, Theorem~\ref{csoportLesz} will serve as the basic step of the induction in the proof of Theorem~\ref{Hahn_type}, too. 

\begin{theorem}\label{csoportLesz}
For an odd involutive FL$_e$-algebra $\mathbf X=(X, \wedge, \vee,\te, \ite{\te}, t, f)$ the following statements are equivalent:
\begin{enumerate}
\item\label{fosdgs}
Each element of $X$ has inverse given by $x^{-1} = \nega{x}$, and hence $(X, \wedge, \vee,\te, t)$ is a {lattice-ordered abelian group},
\item\label{fkkssjffwgj}
$\te$ is cancellative,
\item\label{njsjsh}
$\tau(x)=t$ for all $x\in X$.
\item\label{jsjhsgsjdb}
The only idempotent element in $X^+$ is $t$.
\end{enumerate}
\end{theorem}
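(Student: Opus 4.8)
The plan is to close the cycle $(\ref{jsjhsgsjdb}) \Rightarrow (\ref{njsjsh}) \Rightarrow (\ref{fosdgs}) \Rightarrow (\ref{fkkssjffwgj}) \Rightarrow (\ref{jsjhsgsjdb})$, in which three of the four arrows are immediate and the single arrow $(\ref{njsjsh}) \Rightarrow (\ref{fosdgs})$ carries the real content.

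For $(\ref{jsjhsgsjdb}) \Rightarrow (\ref{njsjsh})$ I would simply quote Proposition~\ref{zeta_1}(\ref{zetaRange_idempotent}): the range of $\tau$ coincides with the set of idempotents lying in $X^+$. Under hypothesis~(\ref{jsjhsgsjdb}) this set is $\{t\}$, so $\tau$ is constantly $t$, which is exactly~(\ref{njsjsh}).

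The substantive step is $(\ref{njsjsh}) \Rightarrow (\ref{fosdgs})$, and here oddness is the crucial ingredient through the identity $\nega{t}=t$. Combining (\ref{tnelnagyobb}) with (\ref{eq_quasi_inverse}) gives $\tau(x)=\res{\te}{x}{x}=\nega{(\g{x}{\nega{x}})}$, so the hypothesis $\tau(x)=t$ reads $\nega{(\g{x}{\nega{x}})}=t$. Applying the involution $\komp$ and using $\nega{t}=t$ then yields $\g{x}{\nega{x}}=t$, which exhibits $\nega{x}$ as a $\te$-inverse of $x$. Since this holds for every $x\in X$, the commutative monoid $(X,\te,t)$ becomes an abelian group; as $\te$ is order-preserving (a general feature of residuated structures), the order is translation invariant, so $(X,\wedge,\vee,\te,t)$ is a lattice-ordered abelian group with $x^{-1}=\nega{x}$.

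The two closing arrows are routine. For $(\ref{fosdgs}) \Rightarrow (\ref{fkkssjffwgj})$ I would note that the operation of any group is cancellative. For $(\ref{fkkssjffwgj}) \Rightarrow (\ref{jsjhsgsjdb})$, if $u$ is idempotent then $\g{u}{u}=u=\g{u}{t}$, and cancelling $u$ forces $u=t$, so $t$ is the only idempotent, in particular the only one in $X^+$. I do not foresee a genuine obstacle; the only point demanding care is the translation $\tau(x)=t \Leftrightarrow \g{x}{\nega{x}}=t$ in the main step, which rests on oddness via $\nega{t}=t$ and would break down for non-odd involutive FL$_e$-algebras.
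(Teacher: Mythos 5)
Your proof is correct, but it is organized around a different decomposition than the paper's. The paper does not close a single four-step cycle: it proves (\ref{fosdgs}) $\Leftrightarrow$ (\ref{njsjsh}) directly (the same computation as your main step, via (\ref{eq_quasi_inverse}) and $\nega{t}=t$), notes (\ref{fosdgs}) $\Rightarrow$ (\ref{fkkssjffwgj}) as you do, handles (\ref{njsjsh}) $\Leftrightarrow$ (\ref{jsjhsgsjdb}) by claim~(\ref{zetaRange_idempotent}) of Proposition~\ref{zeta_1} exactly as you do, and then spends its real technical effort on a \emph{direct} proof of (\ref{fkkssjffwgj}) $\Rightarrow$ (\ref{fosdgs}): from $\g{x}{\nega{x}}\leq f$ and adjointness one derives $\res{\te}{x}{(\g{x}{\nega{x}})}=\nega{x}=\res{\te}{x}{t}$, then applies (\ref{eq_quasi_inverse}) and cancels $x$ to get $\g{x}{\nega{x}}=t$. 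Your cycle sidesteps that computation entirely: you return from cancellativity via the trivial implication (\ref{fkkssjffwgj}) $\Rightarrow$ (\ref{jsjhsgsjdb}) (cancel $u$ in $\g{u}{u}=\g{u}{t}$) and then ride claim~(\ref{zetaRange_idempotent}) of Proposition~\ref{zeta_1} back to (\ref{njsjsh}) and hence to (\ref{fosdgs}). Since both proofs invoke that claim anyway, your arrangement is strictly more economical within the theorem itself; what the paper's direct argument buys is that the implication from cancellativity to invertibility stands on pure residuation reasoning, independent of the $\tau$-machinery of Proposition~\ref{zeta_1}, whereas in your version that machinery becomes load-bearing for the group-theoretic core. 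The caveats you flag (oddness entering only through $\nega{t}=t$, and isotonicity of $\te$ yielding the $\ell$-group structure once inverses exist) are exactly the right ones.
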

\begin{proof}
First note that by (\ref{eq_quasi_inverse}), claim~(\ref{fosdgs}), that is, for all $x\in X$, 
$\g{x}{\nega{x}}=t$, is equivalent to claim~(\ref{njsjsh}).
Also claim~(\ref{fosdgs}) $\Rightarrow$ claim~(\ref{fkkssjffwgj}) is straightforward.
To see  claim~(\ref{fkkssjffwgj}) $\Rightarrow$ claim~(\ref{fosdgs}) we proceed as follows:
By residuation, $\g{x}{\nega{x}}\leq f$ holds, therefore by isotonicity of $\ite{\te}$ at its second argument,  $\res{\te}{x}{(\g{x}{\nega{x}})}\leq \res{\te}{x}{f}=\nega{x}$ follows.
By residuation $\g{x}{\nega{x}}\leq\g{x}{\nega{x}}$ is equivalent to $\res{\te}{x}{(\g{x}{\nega{x}})}\geq\nega{x}$, hence we infer $\res{\te}{x}{(\g{x}{\nega{x}})}=\nega{x}=\res{\te}{x}{f}=\res{\te}{x}{t}$.
By (\ref{eq_quasi_inverse}), $\g{x}{\nega{(\g{x}{\nega{x}})}}=\g{x}{\nega{t}}$ follows, and cancellation by $x$ implies $t=\g{x}{\nega{x}}$, so we are done.
Finally, claim~(\ref{zetaRange_idempotent}) in Proposition~\ref{zeta_1} ensures the equivalence of claims (\ref{njsjsh}) and (\ref{jsjhsgsjdb}).
\end{proof}
In the light of Theorem~\ref{csoportLesz}, in the sequel when we (loosely) speak about a subgroup of an odd involutive FL$_e$-algebra, we always mean a cancellative subalgebra of it.
Further, let $$X_{gr}=\{ x\in X \ | \ \nega{x} \mbox{ is the inverse of } x\}.$$ 
Evidently, there is a subalgebra $\mathbf X_\mathbf{gr}$ of $\mathbf X$ over $X_{gr}$\footnote{This will also follow from Proposition~\ref{elozetes}.},
and
$\mathbf X_\mathbf{gr}$ is the largest subgroup of $\mathbf X$.
We call $\mathbf X_\mathbf{gr}$ the group part of $\mathbf X$.

\section{Two illustrative examples}

Unfortunately, there does not exist any easily accessible example in the class of odd involutive FL$_e$-chains, apart from the cancellative subclass (linearly ordered abelian groups, each with a single idempotent element) or the idempotent subclass (odd Sugihara monoids, all elements are idempotent).
Here we start with the two simplest nontrivial examples of odd involutive FL$_e$-chains over $\mathbb R$, both of which have only a single strictly positive idempotent element.
These examples will be the inspirational source for the type I-IV partial lexicographic product constructions in Definition~\ref{FoKonstrukcio}.

Using (\ref{eq_feltukrozes}) it takes a half-line proof to show that for any odd involutive FL$_e$-algebra, the residual complement of any negative idempotent element $u$ is also idempotent:
$\g{\nega{u}}{\nega{u}}\leq\nega{(\g{u}{u})}=\nega{u}=\g{\nega{u}}{t}\leq\g{\nega{u}}{\nega{u}}$. 
The author of the present paper has put a considerable effort into proving the converse of this statement, with no avail. In fact, the converse statement does not hold, as shown by the following counterexample.
\begin{example}\label{kiveteles2}
\rm
The odd involutive FL$_e$-chain which is to be defined in this example is 
$\PLPII{\pmb{\mathbb Z}}{\pmb{\mathbb R}}$ (
up to isomorphism), 
see item~B in Definition~\ref{FoKonstrukcio} for its formal definition.
Here we present an informal account on the motivation and considerations which had led to the discovery of this example.

Let $I=\ ]\text{-}1,0[$. Consider any order-isomorphic copy ${\mathbf I}=(I,\star,\text{-}\frac{1}{2})$ of the linearly ordered abelian group $\pmb{\mathbb{R}}=(\mathbb R, +, 0)$\footnote{Let, for example, $\star$ be given by $\gstar{x}{y}=f^{-1}(f(x)+f(y))$ where $f: I\to \mathbb R$, $f(x)=\tan((x+0.5)\cdot\pi)$).}.
\begin{figure}[ht]
\begin{center}
\includegraphics[width=0.18\textwidth]{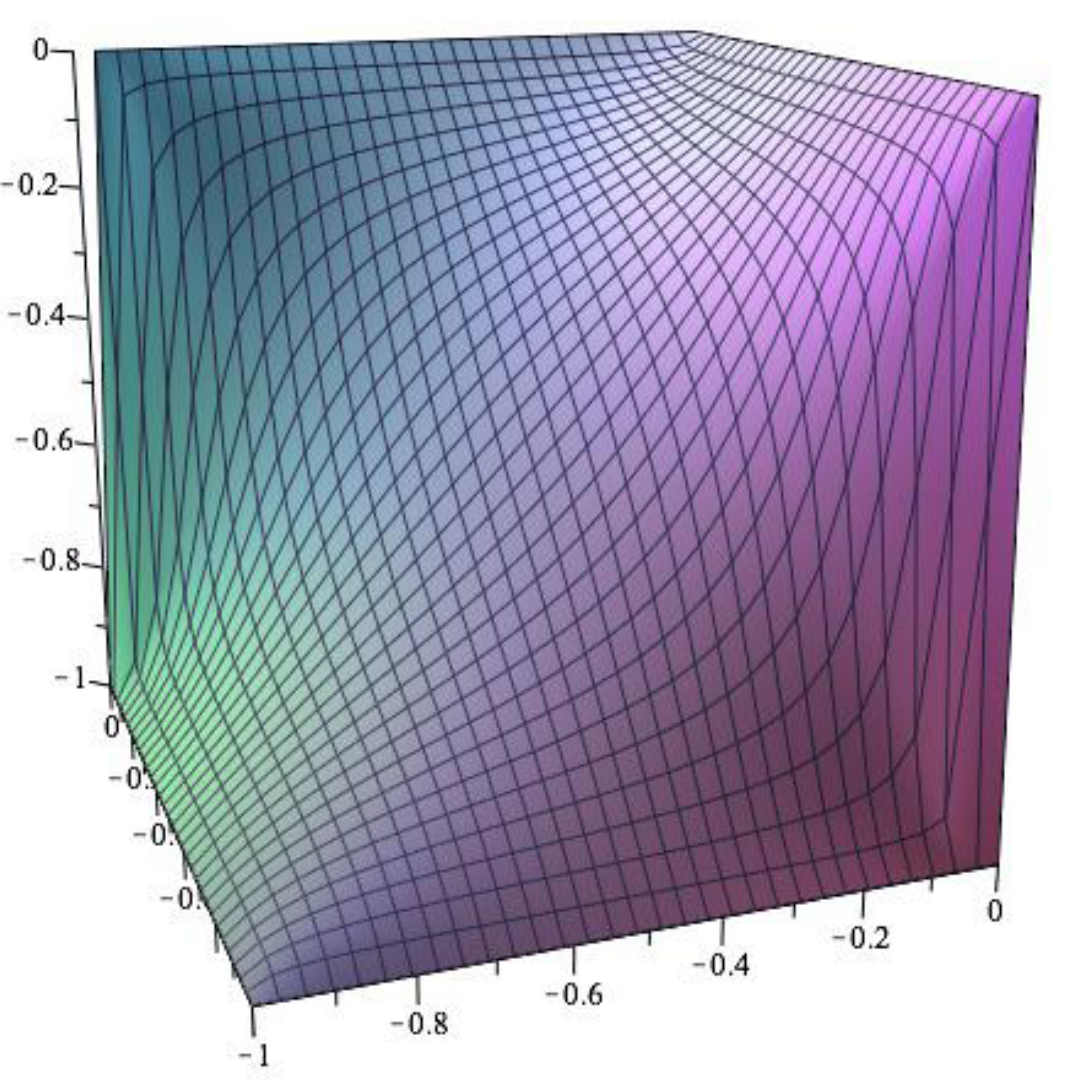} 
\caption{Visualization: The graph of the operation $\star$}
\end{center}
\end{figure}
Our plan is to put a series of copies of $I$ in a row (after each other, one for each integer, that is, formally we consider $\mathbb Z\times I$) to get $\mathbb R$, and to define, based on $\star$, an operation $\te$ over $\mathbb R$ so that the resulting structure becomes an odd involutive FL$_e$-chain. The first problem is caused by the fact that whichever involutive structure we start with (in place of $\mathbf I$), the above-described repetition of its universe will never yield $\mathbb R$. 
In our example $I=\ ]\text{-}1,0[$ does not have top and bottom elements, hence by the above-described repetition of it we obtain a set which is order-isomorphic to $\mathbb R\setminus\mathbb Z$, and not to $\mathbb R$. Or if the original structure does have a top element (and hence by involutivity a bottom element, too) then the universe resulted by the above-described repetition will have gaps. 
To overcome this we extend $\star$ to a new universe $I^\top:=I\cup\{0\}=\ ]\text{-}1,0]$ by letting $0$ be an annihilator of $\star$.
In other words, we add a new top element to the structure $\mathbf I$. 
The resulting structure, denote it by $\mathbf I^\top$, will no longer be an odd involutive FL$_e$-chain, but it is still an FL$_e$-chain, albeit obviously not even involutive due to the broken symmetry of its underlying set.
Luckily we can immediately eliminate this broken symmetry by putting 
$\aleph_0$ copies of $I^\top=]\text{-}1,0]$ in a row, thus obtaining $\mathbb R$.
Now we can define an operation $\te$ over $\mathbb R$ by letting for $a,b\in\mathbb R$,
$$\g{a}{b}=\ceil{a}+\ceil{b}+\gstar{(a-\ceil{a})}{(b-\ceil{b})}.\footnote{$\ceil{a}$ stands for the ceiling of $a$}$$
(see the graph of an order-isomorphic copy of $\te$ in Figure~\ref{fig:1}, right, for the use of such 3D plots the interested reader is referred to \cite{Refl Inv Jenei Geom SemForum}).
The slightly cumbersome task of verifying that $( {\mathbb R}, \leq, \te, \ite{\te}, \text{-}\frac{1}{2}, \text{-}\frac{1}{2} )$ is an odd involutive FL$_e$-chain, is left to the reader.
It amounts
to
verify the associativity of $\te$\footnote{A computation reveals that $\g{(\g{a}{b})}{c}=(\ceil{a}+\ceil{b})+\ceil{c}+\gstar{  \gstar{(a-\ceil{a})}{(b-\ceil{b})}   }{(c-\ceil{c})}$, thus the associativity of $\star$ ensures the associativity of $\te$.},
to
compute the residual operation of $\te$,
to
confirm that the residual complement operation is $\nega{x}=\text{-}x-1$,
and to confirm that $\text{-}\frac{1}{2}$ is the unit element and also the fixed point of $\komp$.
Note that the only strictly positive idempotent element of the constructed algebra is $0$, while $\text{-}\frac{1}{2}$ is its unit element. 
Note, however, that the residual complement of $0$ is not idempotent: 
$\nega{0}=\text{-}1$ and $\g{\text{-}1}{\text{-}1}=\text{-}2$.

In the quest for a deeper understanding of this example, observe that apart from $\star$, the addition operation of $\pmb{\mathbb{R}}$ plays a role in the definition of $\te$.
Moreover, by using a particular order-isomorphism, we can define $\te$ using $\star$ and the addition operation of $\pmb{\mathbb{Z}}$ only, as follows.
As said, putting a series of copies of 
$I^\top$ in a row can, formally, be obtained by considering the lexicographic product $\mathbb Z\lex I^\top$. 
Then, $h(a)=(\ceil{a},a-\ceil{a})$ is an order-isomorphism from $\mathbb R$ to $\mathbb Z\lex I^\top$.
It is left to the reader to verify that $\te$ can equivalently be defined in a {\em coordinatewise manner} 
by letting $\g{a}{b}=h^{-1}[(a_1+b_1,\gstar{a_2}{b_2})]$ where $h(a)=(a_1,a_2)$ and $h(b)=(b_1,b_2)$.
By using the natural order-reversing involutions ($\negaM{\ast}{a}=\text{-}a$ on $\mathbb Z$ and $\negaM{\star}{x}=\text{-}x-1$ on $I$) the reader can also verify that the residual operation of $\te$ is given by
$$
\res{\te}{(a_1,a_2)}{(b_1,b_2)}=\nega{\left(\g{(a_1,a_2)}{\nega{(b_1,b_2)}}\right)} ,
$$
where \footnote{Compare (\ref{FuraNegaEXTpelda}) with the last two rows of (\ref{FuraNegaEXT});
for the last row of (\ref{FuraNegaEXT}) see Definition~\ref{nyilak} for the notation $_\downarrow$.}
\begin{equation}\label{FuraNegaEXTpelda}
\nega{(x,y)}=\left\{
\begin{array}{ll}
(\negaM{\ast}{x}-1,0) 	& \mbox{if $y=0$}\\
(\negaM{\ast}{x},\negaM{\star}{y}) 	& \mbox{if $y\in I$}\\
\end{array}
\right. .
\end{equation}
With this notation, the only strictly positive idempotent element of the constructed algebra is $(0,0)$, while $(0,\text{-}\frac{1}{2})$ is its unit element. 
A moral of this example is that the latter coordinatewise formalism for $\te$ enlightens the example by providing a deeper insight into and a clear description of why $\te$ makes up for an odd involutive FL$_e$-chain (why is it associative, residuated, etc.)
Compare the example above with the formal definition of $\PLPII{\pmb{\mathbb{Z}}}{\pmb{\mathbb{R}}}$ in item~B in Definition~\ref{FoKonstrukcio}.
Further inspection reveals that in these kind of examples one does not necessarily have to work with groups (like $\pmb{\mathbb{Z}}$ and $\pmb{\mathbb{R}}$ in our example), but also odd involutive FL$_e$-algebras suffice in both coordinates.
But in this case only the {\em group part} of the first algebra (and no more) can be enlarged.
This makes our lexicographic product construction partial in nature, as if the first algebra is not a group then its group part is strictly smaller than the whole algebra.
Leaving the rest of the first algebra unchanged will, formally, mean that we consider a Cartesian product there, too, 
to enable a coordinatewise treatment of the operations, namely, the Cartesian product of the rest of the first algebra with a singleton.
This example has inspired the type II partial lexicographic product construction.

Moreover, it turns out that instead of enlarging the {\em group part} one can enlarge any {\em subgroup} of the group part.
This makes our lexicographic product construction definitively partial in nature, since even if the first algebra is a group (and hence its group part is the whole algebra), an even smaller algebra (a subalgebra of the group part) can be enlarged, by the second algebra extended by a top.
This option of {\em partial} enlargement inspires the type IV partial lexicographic construction defined in item~(B) of Definition~\ref{FoKonstrukcio}.
\end{example}

\begin{example}
The odd involutive FL$_e$-chain which is to be defined in this example is 
$\PLPI{\pmb{\mathbb{R}}}{\pmb{\mathbb{Z}}}{\pmb{\mathbb{R}}}$
(up to isomorphism), 
see item~A in Definition~\ref{FoKonstrukcio} for its formal definition.

In order to further exploit the partial nature of our lexicographic product construction, take the subgroup $\pmb{\mathbb Z}=(\mathbb Z,+,0)$ of the linearly ordered abelian group of the reals $\pmb{\mathbb R}=(\mathbb R,+,0)$.
Our plan is to replace each integer inside the reals by a whole copy of the reals (isomorphically, by $I$ from ${\mathbf I}=(I,\star,\text{-}\frac{1}{2})$ of Example~\ref{kiveteles2}), 
and to define an odd involutive FL$_e$-algebra over it based on $\star$.
Again, if we wish to obtain a universe, which is order isomorphic to $\mathbb R$, then replacing each integer inside the reals by $I=\ ]\text{-}1,0[$ is not convenient; the resulting universe will not be topologically connected, as it is easy to see. 
To overcome this we extend $\star$ to $I^{\top\bot}:=I\cup\{\text{-}1,0\}=[\text{-}1,0]$ by letting first $0$ be annihilator over $]\text{-}1,0]$, and then $\text{-}1$ be annihilator over $[\text{-}1,0]$.
In other words, we add a new top and a new bottom to the structure $\mathbf I$, thus obtaining $\mathbf I^{\top\bot}$.
Now, let the new universe be $\mathbb Z\times I^{\top\bot}\cup (\mathbb R\setminus \mathbb Z)\times \{\text{-}1\}$.
Again, we define $\te$ coordinatewise by letting $\g{(a_1,a_2)}{(b_1,b_2)}=(a_1+b_1,\gstar{a_2}{b_2})$, and 
we define $\komp$, using the natural order-reversing involutions 
($\negaM{\ast}{a}=\text{-}a$ on $\mathbb R$ and $\negaM{\star}{x}=\text{-}x-1$ on $I^{\top\bot}$)
also coordinatewise by 
$$
\nega{(x,y)}=\left\{
\begin{array}{ll}
(\negaM{\ast}{x},\negaM{\star}{y}) 	& \mbox{if $x\in \mathbb Z$}\\
(\negaM{\ast}{x},\text{-}1) 		& \mbox{if $x\in\mathbb R\setminus\mathbb Z$}\\
\end{array}
\right. .
$$
Finally, let 
$$
\res{\te}{(a_1,a_2)}{(b_1,b_2)}=\nega{\left(\g{(a_1,a_2)}{\nega{(b_1,b_2)}}\right)} .
$$
What we obtain is an odd involutive FL$_e$-chain, see the graph of an order-isomorphic copy of $\te$ in Figure~\ref{fig:1}, left.
Here, too, there is only one strictly positive idempotent element in the constructed algebra, namely $(0,0)$, while $(0,\text{-}\frac{1}{2})$ is its unit element. 
Note that unlike in the previous example here the residual complement of $(0,0)$ is idempotent: 
$\nega{(0,0)}=(0,\text{-}1)$ and 
$\g{(0,\text{-}1)}{(0,\text{-}1)}=(0,\text{-}1)$.
Compare the example above with the formal definition of 
$\PLPI{\pmb{\mathbb{R}}}{\pmb{\mathbb{Z}}}{\pmb{\mathbb{R}}}$
in item~A in Definition~\ref{FoKonstrukcio}.
Further inspection reveals that in these kind of examples one does not necessarily have to work with groups (like $\pmb{\mathbb R}$ together with its subgroup $\pmb{\mathbb Z}$ in the first coordinate, and $\pmb{\mathbb R}$ in the second), 
but any odd involutive FL$_e$-algebra with any of its subgroup suffices in the first coordinate, and any odd involutive FL$_e$-algebra suffices in the second one.
Summing up, we start with an odd involutive FL$_e$-algebra and we only enlarge a subgroup of it by an arbitrary odd involutive FL$_e$-algebra equipped with top and bottom. 
This example inspired the type I partial lexicographic product construction.

Moreover, it turns out that instead of enlarging a subgroup, one can 
(1) enlarge only a subgroup of a subgroup by the second algebra equipped with top and bottom, plus 
(2) 
enlarge the difference of the two subgroups by only the top and the bottom.
This second step is equivalent to enlarging the difference of the two subgroups also by the second algebra equipped with top and bottom, and then {\em removing} the second algebra, thus leaving only its top and bottom there. 
Ultimately, at this removal step we {\em take a subalgebra} compared to the corresponding algebra which is constructed by the type I variant, see Theorem~\ref{SubLexiTheo}.
The option of {\em partial} enlargement (when only a subalgebra of a subalgebra is enlarged) in this example inspires the type III partial lexicographic construction defined in item~(A) of Definition~\ref{FoKonstrukcio}.

Moreover, it turns out that 
in order to describe the class of odd involutive FL$_e$-chains having finitely many idempotent elements in its full generality, 
it is also possible to take such subalgebras of the group-part of the constructed algebra which are {\em not} the direct products of their projections. It motivates the most general construction, called partial {\em sub}lex products in Definition~B.
\end{example}

\section{Constructing involutive FL$_e$-algebras -- Partial sublex products}\label{SectPLPs}
We start with a notation.
\begin{definition}\label{nyilak}\rm
Let $(X, \leq)$ be a 
poset.
For $x\in X$ define 
$$
x_\downarrow=
\left\{
\begin{array}{ll}
z & \mbox{if there exists a unique $z\in X$ such that $x$ covers $z$,}\\
x & \mbox{otherwise.}\\
\end{array}
\right.
$$
We define $x_\uparrow$ dually.
Note that if $\komp$ is an order-reversing involution of $X$ then it holds true that 
\begin{equation}\label{FelNeg_NegLe}
\nega{x}_\uparrow=\nega{(x_\downarrow)} \ \ \   \mbox{ and }  \ \ \ \nega{x}_\downarrow=\nega{(x_\uparrow)} .
\end{equation}
We say for $Z\subseteq X$ that $Z$ is discretely embedded into $X$ if 
for $x\in Z$ it holds true that $x\notin\{ x_\uparrow,x_\downarrow\}\subseteq Z$. 
\end{definition}

Next, we introduce a construction, called {\em partial lexicographic product} 
(or partial lex product, or partial lex extension)
with four slightly different variations in Definition~\ref{FoKonstrukcio}. 
This definition lays the foundation for the most general construction in Definition~B, needed for our structural description purposes.
Roughly, only a subalgebra is used as a first component of a lexicographic product and the rest of the algebra is left unchanged, hence the adjective \lq partial\rq.
This results in an involutive FL$_e$-algebra, which is odd, too, provided that the second component of the lexicographic product is so, see Theorem~\ref{SubLexiTheo}.

In Remark~\ref{KonnyuLesz} we shall refer to some algebraic notions which appear later in the paper at the algebraic decomposition part, thus trying to make a bridge between the different components of the construction part and the decomposition part of the paper.

\begin{definition}\label{FoKonstrukcio}
\rm
Let ${\mathbf X}=(X, \wedge_X,\vee_X, \ast, \ite{\ast}, t_X, f_X)$ be an odd involutive FL$_e$-algebra
and ${\mathbf Y}=( Y, \wedge_Y,\vee_Y, \star, \ite{\star}, t_Y, f_Y )$
be an involutive FL$_e$-algebra, with residual complement $\kompM{\ast}$ and $\kompM{\star}$, respectively.

\medskip
\begin{enumerate}
\item[A.]
Add a new element $\top$ to $Y$ as a top element and annihilator (for $\star$), then add a new element $\bot$ to $Y\cup\{\top\}$ as a bottom element and annihilator.
Extend $\kompM{\star}$ by $\negaM{\star}{\bot}=\top$ and $\negaM{\star}{\top}=\bot$.
Let 
$\mathbf V\leq\mathbf Z\leq\mathbf X_\mathbf{gr}$. 
Let 
$$
\PLPIII{X}{Z}{V}{Y}= (V\times (Y\cup\{\top,\bot\}))\cup ((Z\setminus V)\times\{\top,\bot\}) \cup \left((X\setminus Z)\times \{\bot\}\right)
$$
and define $\PLPIII{\mathbf X}{\mathbf Z}{\mathbf V}{\mathbf Y}$, the {\em type III partial lexicographic product} of $\mathbf X,\mathbf Z,\mathbf V$ and $\mathbf Y$ as follows:
$$\PLPIII{\mathbf X}{\mathbf Z}{\mathbf V}{\mathbf Y}=\left(\PLPIII{X}{Z}{V}{Y}, \leq, \te, \ite{\te}, (t_X,t_Y),(f_X,f_Y)\right),$$
where $\leq$ is the restriction of the lexicographical order of $\leq_X$ and {\small $\leq_{Y\cup\{\top,\bot\}}$} to $\PLPIII{X}{Z}{V}{Y}$, 
$\te$ is defined coordinatewise, and the operation $\ite{\te}$ is given by
$
\res{\te}{(x_1,y_1)}{(x_2,y_2)}=\nega{\left(\g{(x_1,y_1)}{\nega{(x_2,y_2)}}\right)} ,
$
where
$$
\nega{(x,y)}=\left\{
\begin{array}{ll}
(\negaM{\ast}{x},\bot) 		& \mbox{if $x\not\in Z$}\\
(\negaM{\ast}{x},\negaM{\star}{y}) 	& \mbox{if $x\in Z$}\\
\end{array}
\right. .
$$
In the particular case when $\mathbf V=\mathbf Z$, we use the simpler notation 
$\PLPI{\mathbf X}{\mathbf Z}{\mathbf Y}$ for $\PLPIII{\mathbf X}{\mathbf Z}{\mathbf V}{\mathbf Y}$
and call it
the {\em type I partial lexicographic product} 
of $\mathbf X,\mathbf Z$, and $\mathbf Y$.
\item[B.]\label{B}
Assume that $X_{gr}$ 
is discretely embedded into $X$.
Add a new element $\top$ to $Y$ as a top element and annihilator.
Let 
$\mathbf V\leq\mathbf X_\mathbf{gr}$. 
Let
$$
\PLPIV{X}{V}{Y}=(V\times Y)\cup(X\times \{\top\})
$$
and define 
$\PLPIV{\mathbf X}{\mathbf V}{\mathbf Y}$, 
the {\em type IV partial lexicographic product} of $\mathbf X$, $\mathbf V$ and $\mathbf Y$ as follows:
$$\PLPIV{\mathbf X}{\mathbf V}{\mathbf Y}=\left(\PLPIV{X}{V}{Y}, \leq, \te, \ite{\te}, (t_X,t_Y),(f_X,f_Y)\right),$$
where $\leq$ is the restriction of the lexicographical order of $\leq_X$ and $\leq_{ Y\cup\{\top\}}$ to 
$\PLPIV{X}{V}{Y}$,
$\te$ is defined coordinatewise, and the operation $\ite{\te}$ is given by
$
\res{\te}{(x_1,y_1)}{(x_2,y_2)}=\nega{\left(\g{(x_1,y_1)}{\nega{(x_2,y_2)}}\right)} ,
$
where $\komp$ is defined coordinatewise\footnote{Note that intuitively it would make up for a coordinatewise definition, too, in the second line of (\ref{FuraNegaEXT}) to define it as $(\negaM{\ast}{x},\bot)$. 
But $\bot$ is not amongst the set of possible second coordinates. However, since $X_{gr}$ is discretely embedded into $X$, if $(\negaM{\ast}{x},\bot)$ would be an element of the algebra then it would be equal to $((\negaM{\ast}{x})_\downarrow,\top)$. 
}
by
\begin{equation}\label{FuraNegaEXT}
\nega{(x,y)}=
\left\{
\begin{array}{ll}
(\negaM{\ast}{x},\top) 			& \mbox{if $x\not\in X_{gr}$ and $y=\top$}\\
((\negaM{\ast}{x})_\downarrow,\top) 	& \mbox{if $x\in X_{gr}$ and $y=\top$}\\
(\negaM{\ast}{x},\negaM{\star}{y}) 	& \mbox{if $x\in V$ and $y\in Y$}\\
\end{array}
\right. .
\end{equation}
In the particular case when $\mathbf V=\mathbf X_{\mathbf{gr}}$, we use the simpler notation 
$\PLPII{\mathbf X}{\mathbf Y}$ for $\PLPIV{\mathbf X}{\mathbf V}{\mathbf Y}$
and call it
the {\em type II partial lexicographic product} 
of $\mathbf X$ and $\mathbf Y$.
\end{enumerate}
\end{definition}

\begin{figure}\label{fig_sdughsdgh}
\begin{center}
  \includegraphics[width=0.39\textwidth]{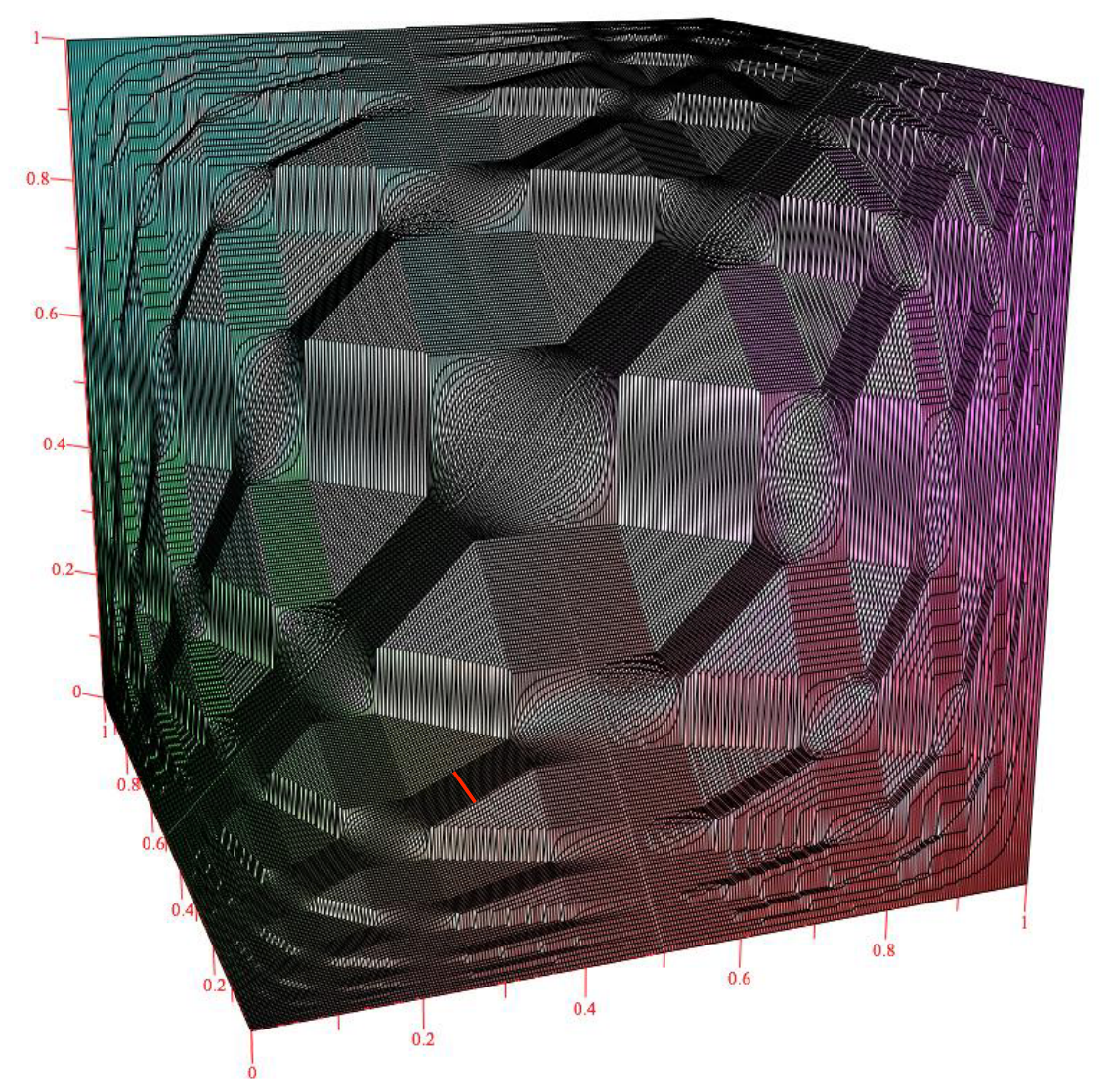} \ \ \ \ \ \ 
  \includegraphics[width=0.37\textwidth]{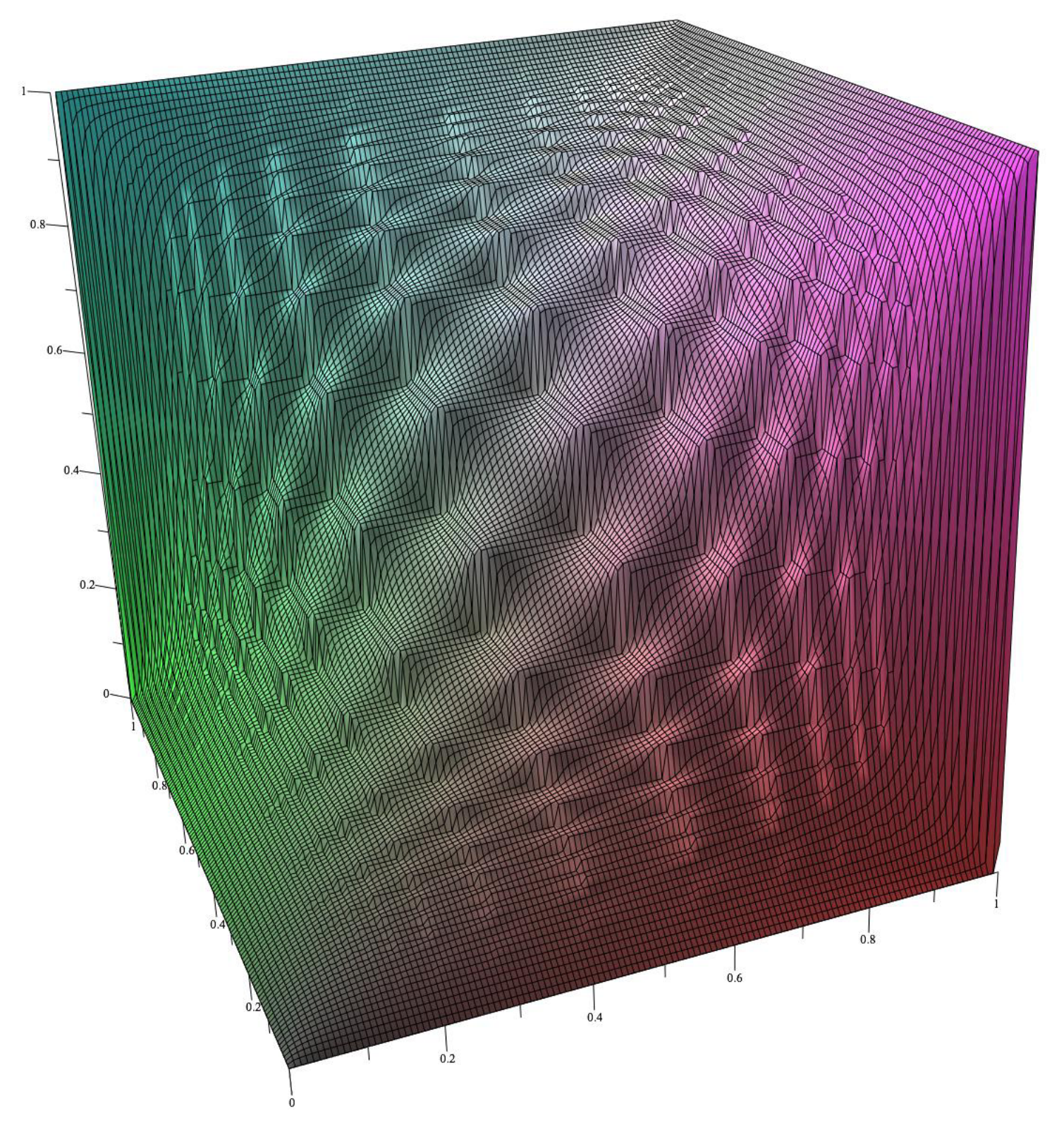}
\caption{Visualization:
$\PLPI{\pmb{\mathbb{R}}}{\pmb{\mathbb{Z}}}{\pmb{\mathbb{R}}}$ (left) and
$\PLPII{\pmb{\mathbb{Z}}}{\pmb{\mathbb{R}}}$ (right) shrank into $]0,1[$}
\label{fig:1}
\end{center}
\end{figure}

In order to reduce the number of different cases to be considered when checking the residuated nature of some structure (e.g. in the proof of Theorem~\ref{SubLexiTheo}) we will rely on the following statement, which generalizes the well-known equivalent formulation of involutive FL$_e$-algebras in terms of dualizing constants.

\begin{proposition}\label{PropClaim}
Let ${\mathcal M}=(M,\leq,\te)$ be a structure such that $(M,\leq)$ is a poset and $(M,\te)$ is a semigroup.
Call $c\in M$ a dualizing element\footnote{Dualizing elements are defined in {\em residuated} structures in the literature, see e.g. \cite[Section 3.4.17.]{gjko}.}  of $\mathcal M$, if 
(i) for $x\in M$ there exists $\res{\te}{x}{c}$\footnote{Here we do not assume that $\te$ is residuated. We only postulate that the greatest element of the set $\{ z\in M \ | \ \g{x}{z\leq c}\}$ exists for all $x$ and a fixed $c$.}, and 
(ii) for $x\in M$, $\res{\te}{(\res{\te}{x}{c})}{c}=x$.
\\
If there exists a dualizing element $c$ of $\mathcal M$ then $\te$ is residuated, and its residual operation is given by $\res{\te}{x}{y}=\res{\te}{\left(\g{x}{(\res{\te}{y}{c})}\right)}{c}$.
\end{proposition}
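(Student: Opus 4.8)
The plan is to read the hypothesis as saying that $\te$ is residuated \emph{only with respect to the single element $c$}, and then to bootstrap full residuation from this. Abbreviate $\bar{x}:=\res{\te}{x}{c}$, so that by hypothesis~(i) the set $\{v\in M\ |\ \g{x}{v}\leq c\}$ has a greatest element $\bar{x}$ for every $x$, making $x\mapsto\bar{x}$ a total unary operation on $M$, and by hypothesis~(ii) this operation is an involution, $\bar{\bar{x}}=x$. The guiding idea is that involutivity lets one rewrite any inequality $\,\cdots\leq z\,$ as $\,\cdots\leq\bar{\bar{z}}\,$ and then trade it for an inequality $\,\cdots\leq c\,$, which is the only shape for which a residual is guaranteed to exist. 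Along the way I use that $\te$ is associative, commutative and order-preserving, as is the case in all the intended applications.

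First I would establish the two-sided Galois equivalence
$$\g{a}{b}\leq c \quad\Longleftrightarrow\quad b\leq\bar{a} \quad\Longleftrightarrow\quad a\leq\bar{b}.$$
The implication $\g{a}{b}\leq c\Rightarrow b\leq\bar{a}$ is immediate, since $\bar{a}$ is by definition the greatest $v$ with $\g{a}{v}\leq c$. Conversely, if $b\leq\bar{a}$ then $\g{a}{b}\leq\g{a}{\bar{a}}\leq c$, using that $\te$ is order-preserving and that $\g{a}{\bar{a}}\leq c$ because $\bar{a}$ lies in $\{v\ |\ \g{a}{v}\leq c\}$. This proves $\g{a}{b}\leq c\Leftrightarrow b\leq\bar{a}$; applying it to $\g{b}{a}$ and invoking commutativity $\g{a}{b}=\g{b}{a}$ yields the remaining equivalence $\g{a}{b}\leq c\Leftrightarrow a\leq\bar{b}$.

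The residuation identity is now a short chain of equivalences. For arbitrary $x,y,z\in M$,
$$\g{x}{y}\leq z \iff \g{x}{y}\leq\bar{\bar{z}} \iff \g{(\g{x}{y})}{\bar{z}}\leq c \iff \g{(\g{x}{\bar{z}})}{y}\leq c \iff y\leq\overline{\g{x}{\bar{z}}},$$
where the first step is involutivity~(ii), the second and fourth steps are the Galois equivalence (used with the pairs $(\g{x}{y},\bar{z})$ and $(\g{x}{\bar{z}},y)$), and the middle step is associativity together with commutativity, giving $\g{(\g{x}{y})}{\bar{z}}=\g{(\g{x}{\bar{z}})}{y}$. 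Comparing the two ends, this is precisely the adjointness condition $\g{x}{y}\leq z\Leftrightarrow\res{\te}{x}{z}\geq y$ with $\res{\te}{x}{z}:=\overline{\g{x}{\bar{z}}}=\res{\te}{(\g{x}{(\res{\te}{z}{c})})}{c}$, which is the asserted formula. Hence $\te$ is residuated and its residual is the one claimed; note that no unit element was used, so the semigroup hypothesis suffices.

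I expect the only delicate point to be the two-sided Galois equivalence of the second paragraph: its reverse implication is exactly where order-preservation of $\te$ is needed, and its symmetric form $\Leftrightarrow a\leq\bar{b}$ is where commutativity is needed. Conceptually the single real idea is the doubling step $z=\bar{\bar{z}}$, which converts a statement about the unknown residual $\res{\te}{x}{z}$ for an arbitrary $z$ into manipulations involving only the available residual $w\mapsto\res{\te}{w}{c}$; everything else is bookkeeping with associativity and commutativity.
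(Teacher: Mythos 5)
Your proof is correct and follows essentially the same route as the paper's: the identical four-step chain of equivalences (replace $z$ by its double dual via (ii), convert to an inequality $\leq c$ by adjointness at $c$, regroup using associativity and commutativity, then pull out the free variable by adjointness again), arriving at the same formula $\res{\te}{x}{z}=\res{\te}{\left(\g{x}{(\res{\te}{z}{c})}\right)}{c}$. The only difference is presentational: you prove the Galois equivalence $\g{a}{b}\leq c \Leftrightarrow b\leq\res{\te}{a}{c}$ explicitly, flagging that its converse direction uses order-preservation of $\te$ and that commutativity is needed to swap arguments, whereas the paper subsumes both points under the word \lq\lq adjointness\rq\rq.
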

\begin{proof}
Indeed, 
$\g{z}{x}\leq y$ is equivalent to $\g{z}{x}\leq\res{\te}{\left(\res{\te}{y}{c}\right)}{c}$.
By adjointness it is equivalent to $\g{\left(\g{z}{x}\right)}{(\res{\te}{y}{c})}\leq c$.
By associativity it is equivalent to $\g{z}{\left(\g{x}{(\res{\te}{y}{c})}\right)}\leq c$, which is equivalent to $z\leq \res{\te}{\left(\g{x}{(\res{\te}{y}{c})}\right)}{c}$ by adjointness.
Thus we obtained $\res{\te}{x}{y}=\res{\te}{\left(\g{x}{(\res{\te}{y}{c})}\right)}{c}$, as stated.
\end{proof}

\begin{theorem}\label{SubLexiTheo}
Adapt the notation of Definition~\ref{FoKonstrukcio}.
$\PLPIII{\mathbf X}{\mathbf Z}{\mathbf V}{\mathbf Y}$
and
$\PLPIV{\mathbf X}{\mathbf V}{\mathbf Y}$
are
involutive FL$_e$-algebras with the same rank as that of $\mathbf Y$.
In particular, if $\mathbf Y$ is odd then so are 
$\PLPIII{\mathbf X}{\mathbf Z}{\mathbf V}{\mathbf Y}$
and
$\PLPIV{\mathbf X}{\mathbf V}{\mathbf Y}$.
In addition, 
$\PLPIII{\mathbf X}{\mathbf Z}{\mathbf V}{\mathbf Y}\leq\PLPI{\mathbf X}{\mathbf Z}{\mathbf Y}$ and
$\PLPIV{\mathbf X}{\mathbf V}{\mathbf Y}\leq\PLPII{\mathbf X}{\mathbf Y}$.
\end{theorem}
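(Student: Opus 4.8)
The plan is to verify directly that both constructions meet the hypotheses of Proposition~\ref{PropClaim} with the dualizing element $c=(f_X,f_Y)$, since this single device yields residuation together with exactly the residual formula postulated in Definition~\ref{FoKonstrukcio}, sparing us a case analysis of adjointness. Because $\PLPI{\mathbf X}{\mathbf Z}{\mathbf Y}$ and $\PLPII{\mathbf X}{\mathbf Y}$ are the instances $\mathbf V=\mathbf Z$ and $\mathbf V=\mathbf X_\mathbf{gr}$ of the type III and type IV constructions, it suffices to treat $\PLPIII{\mathbf X}{\mathbf Z}{\mathbf V}{\mathbf Y}$ and $\PLPIV{\mathbf X}{\mathbf V}{\mathbf Y}$ in full generality; the type I and type II assertions then follow as special cases. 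First I would record that the restricted lexicographic order makes the universe a lattice (immediate when $\mathbf X,\mathbf Y$ are chains, since the lexicographic order of chains is again a chain, and in general secured by the adjoined extrema $\top,\bot$), and that $\te$, being coordinatewise, is commutative with unit $(t_X,t_Y)$, while associativity reduces to that of $\ast$ and $\star$ once one checks that $\top$ and $\bot$ act as nested annihilators.

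The first substantive step is closure of $\te$ on the non-rectangular universe. The key observation is that the second coordinate of a product lies in $Y$ only when both factors already have their second coordinate in $Y$; such factors necessarily carry their first coordinates in $V$ (respectively in $X_{gr}$), and since $\mathbf V,\mathbf Z,\mathbf X_\mathbf{gr}$ are subgroups, hence closed under $\ast$, the product's first coordinate remains in $V$, so no product leaves the prescribed set. In every other case a factor contributes an annihilating $\top$ or $\bot$, which keeps the result inside the $\{\top,\bot\}$- or $\{\top\}$-layers. The analogous computation gives closure under $\komp$, using that subgroups are closed under $\negaM{\ast}$.

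The heart of the proof is showing that $c=(f_X,f_Y)$ is dualizing, i.e. that the displayed $\komp$ genuinely computes $\res{\te}{x}{c}=\max\{v:\g{x}{v}\le c\}$ and is an involution $\nega{\nega{x}}=x$. For the type III negation this is essentially coordinatewise: $\negaM{\ast}$ is an involution on $X$, $\negaM{\star}$ extended by $\negaM{\star}{\top}=\bot$ and $\negaM{\star}{\bot}=\top$ is an involution on $Y\cup\{\top,\bot\}$, and the two defining clauses ($x\in Z$, $x\notin Z$) compose correctly because $Z$ is closed under $\negaM{\ast}$. The genuinely delicate case is the type IV involution, because the middle clause of (\ref{FuraNegaEXT}) is \emph{not} coordinatewise: it sends $(x,\top)$ with $x\in X_{gr}$ to $((\negaM{\ast}{x})_\downarrow,\top)$. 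This is precisely where the hypothesis that $X_{gr}$ is discretely embedded into $X$ is indispensable; invoking the identities (\ref{FelNeg_NegLe}), namely $\nega{x}_\uparrow=\nega{(x_\downarrow)}$ and $\nega{x}_\downarrow=\nega{(x_\uparrow)}$, one checks that composing the three clauses returns the original element rather than a cover of it. I expect this verification, reconciling the shift by $_\downarrow$ both with involutivity and with the maximality half of the dualizing condition, to be the main obstacle: the order-theoretic bookkeeping around the covers of group-part elements is where the argument is most fragile.

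Once Proposition~\ref{PropClaim} applies, residuation and involutivity are secured, so both constructions are involutive FL$_e$-algebras. The rank is then read off the constants: as $\mathbf X$ is odd, $t_X=f_X$, so comparing $(t_X,t_Y)$ with $(f_X,f_Y)$ lexicographically reduces to comparing $t_Y$ with $f_Y$, which is the rank of $\mathbf Y$; in particular $t_Y=f_Y$ forces $(t_X,t_Y)=(f_X,f_Y)$, so the construction is odd whenever $\mathbf Y$ is. Finally, for the subalgebra assertions I would note the universe inclusions $\PLPIII{X}{Z}{V}{Y}\subseteq\PLPI{X}{Z}{Y}$ and $\PLPIV{X}{V}{Y}\subseteq\PLPII{X}{Y}$ coming from $\mathbf V\le\mathbf Z\le\mathbf X_\mathbf{gr}$, observe that the formulas for $\te$ and $\komp$ of the partial version are literally the restrictions of those of the full version, and check closure as before: a product could escape into the extra region $(Z\setminus V)\times Y$ only if both factors lay in $V\times Y$, but then the subgroup property of $\mathbf V$ keeps the product inside $V\times Y$, a contradiction. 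Since the constants $(t_X,t_Y),(f_X,f_Y)$ lie in the smaller universe (because $t_X=f_X\in V$) and closure under $\ite{\te}$ follows from closure under $\te$ and $\komp$, the partial versions are subalgebras, completing the proof.
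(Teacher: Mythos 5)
You follow essentially the same route as the paper's own proof: closure of the universe under $\te$ and $\komp$ via the subgroup properties, coordinatewise associativity and unit, residuation obtained from Proposition~\ref{PropClaim} with dualizing element $(f_X,f_Y)$, the discrete embedding of $X_{gr}$ together with (\ref{FelNeg_NegLe}) handling the non-coordinatewise clause of (\ref{FuraNegaEXT}), the rank read off from the constants, and the subalgebra claims from the universe inclusions plus closure. The case analysis you defer as ``one checks'' --- establishing (\ref{dualizing}) and (\ref{tenylegResCompTOPPAL}) --- is exactly what the paper's proof spends most of its length on, and it goes through as you predict: the delicate type IV points are resolved by the inverse property of $\negaM{\ast}{x_1}$ for $x_1$ in the group part (turning $x_1\ast x_2<_X f_X$ into $x_2\leq_X(\negaM{\ast}{x_1})_\downarrow$ via discreteness) and by the observation that a product of two non-group elements can never equal $f_X$.
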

\begin{proof}
\item[A.]
\medskip\noindent
{\em Proof for the type III extension}
$\PLPIII{\mathbf X}{\mathbf Z}{\mathbf V}{\mathbf Y}$
\nopagebreak
\\
Clearly, $\komp$ is an order-reversing involution on $\PLPIII{X}{Z}{V}{Y}$.
When checking that $\PLPIII{X}{Z}{V}{Y}$ is closed under $\te$, the only non-trivial cases are 
(i)
when the product of the first coordinates in not in $Z$, because then the product of the second coordinates has to be $\bot$, and
(ii)
when the product of the first coordinates in in $Z\setminus V$, because then the product of the second coordinates has to be $\top$ or $\bot$. 
But if the product of the second coordinates is not $\bot$ then none of the second coordinates can be $\bot$ (since $\bot$ in annihilator), hence both first coordinates must be in $Z$, as required.
Second, if the product of the second coordinates is neither $\top$ nor $\bot$ then none of the second coordinates can be $\top$ or $\bot$ (because of the product table for $\top$ and $\bot$), hence both first coordinates must be in $V$, as required.

Since $\bot$ is annihilator, $\te$ can be expressed as
$$
\g{(x_1,y_1)}{(x_2,y_2)}=
\left\{
\begin{array}{ll}
(x_1\ast x_2,y_1\star y_2) & \mbox{if $x_1,x_2\in Z$}\\
(x_1\ast x_2,\bot) & \mbox{otherwise}\\
\end{array}
\right. ,
$$
hence associativity of $\te$ and that $(t_X,t_Y)$ is the unit of $\te$ are straightforward. 

Next, we state that $\te$ is residuated. By Proposition~\ref{PropClaim} it suffices to prove that
$(f_X,f_Y)$ is a dualizing element of $(\PLPIII{X}{Z}{V}{Y},\leq,\te)$.
In more details, that for any $(x,y)\in \PLPIII{X}{Z}{V}{Y}$,
$\res{\te}{(x,y)}{(f_X,f_Y)}$ exists, and it equals to $\nega{(x,y)}$ (and thus $(x,y)\mapsto\res{\te}{(x,y)}{(f_X,f_Y)}$ is of order two).
Equivalently, that for $(x_1,y_1),(x_2,y_2)\in \PLPIII{X}{Z}{V}{Y}$, 
\begin{equation}\label{dualizing}
\g{(x_1,y_1)}{(x_2,y_2)}\leq (f_X,f_Y) \mbox{ if and only if } (x_2,y_2)\leq \nega{(x_1,y_1)} .
\end{equation}
\item[-]
Assume that at least one of $x_1$ and $x_2$ is not in $Z$. Due to commutativity of $\ast$ and $\star$, and the involutivity of $\komp$, we may safely assume $x_2\not\in Z$.
Then $y_2=\bot$ and thus $\g{(x_1,y_1)}{(x_2,y_2)}
=(x_1\ast x_2,\bot)$ is less or equal to $(f_X,f_Y)$ if and only if $x_1\ast x_2\leq_X f_X$, if and only if $x_2\leq_X \negaM{\ast}{x_1}$ by residuation, if and only if $(x_2,\bot)\leq(\negaM{\ast}{x_1}, \ldots)=\nega{(x_1,y_1)}$, and we are done.
\item[-]
Therefore, in the sequel we will assume $x_1,x_2\in Z$.
Then $\nega{(x_1,y_1)}=(\negaM{\ast}{x_1},\negaM{\star}{y_1})$.

-- First let $(x_1\ast x_2,y_1\star y_2)\leq (f_X,f_Y)$.
This holds if and only if either $x_1\ast x_2<_Xf_X$, or $x_1\ast x_2=f_X$ and $y_1\star y_2\leq_Y f_Y$ holds.
In the first case, since $\negaM{\ast}{x_1}$ is the inverse of $x_1$, $x_2<_X\negaM{\ast}{x_1}$ follows, confirming $(x_2,y_2)\leq (\negaM{\ast}{x_1},\negaM{\star}{y_1})$, while in the second case $x_2=\negaM{\ast}{x_1}$ and by residuation $y_2\leq_Y\negaM{\star}{y_1}$ follows, hence $(x_2,y_2)\leq (\negaM{\ast}{x_1},\negaM{\star}{y_1})$ holds, as stated.

-- Second, let $(x_2,y_2)\leq (\negaM{\ast}{x_1},\negaM{\star}{y_1})$. 
This holds if and only if either $x_2<_X\negaM{\ast}{x_1}$, or $x_2=\negaM{\ast}{x_1}$ and $y_2\leq_Y\negaM{\star}{y_1}$.
In the first case $x_1\ast x_2<_X f_X$ and hence $(x_1\ast x_2,y_1\star y_2)< (f_X,f_Y)$, while in the second case $x_1\ast x_2=x_1\ast \negaM{\ast}{x_1}=f_X$ holds, and by residuation $y_1\star y_2\leq_Y f_Y$, hence the proof of (\ref{dualizing}) is concluded.
\\
Summing up, $\te$ is residuated and  
$\res{\te}{(x_1,y_1)}{(x_2,y_2)}=\nega{\left(\g{(x_1,y_1)}{\nega{(x_2,y_2)}}\right)}$ holds. 

If $\mathbf Y$ is odd (resp.\ positive or negative rank) then $\negaM{\star}{f_Y}=f_Y$ (resp.\ $\negaM{\star}{f_Y}>_Y f_Y$, $\negaM{\star}{f_Y}<_Y f_Y$) and hence the dualizing element $(f_X,f_Y)$ is a fixed point of $\komp$ (resp.\ $\nega{(f_X,f_Y)}>(f_X,f_Y)$ or $\nega{(f_X,f_Y)}<(f_X,f_Y)$); that is 
$\PLPIII{\mathbf X}{\mathbf Z}{\mathbf V}{\mathbf Y}$ is odd (resp.\ positive or negative rank), too.

\item[B.]
\medskip\noindent
{\em Proof for the type IV extension}
$\PLPIV{\mathbf X}{\mathbf V}{\mathbf Y}$
\nopagebreak
\\
It is straightforward to verify that $\komp$ is an order-reversing involution on $\PLPIV{X}{V}{Y}$ at the first and third rows of (\ref{FuraNegaEXT}).
At the second row of (\ref{FuraNegaEXT}), we can use that $X_{gr}$ is discretely embedded into $X$ and so (\ref{FelNeg_NegLe}) holds for $x\in X_{gr}$.

Since $\top$ annihilates all elements of $Y$, $\te$ can be expressed as
$$
\g{(x_1,y_1)}{(x_2,y_2)}=
\left\{
\begin{array}{ll}
(x_1\ast x_2,y_1\star y_2) & \mbox{if $x_1,x_2\in V$}\\
(x_1\ast x_2,\top) & \mbox{otherwise}\\
\end{array}
\right. .
$$
hence associativity of $\te$ and that $((t_X,t_Y))$ is the unit of $\te$ are straightforward. 

Next, we state that $\te$ is residuated. By the claim, it suffices to prove that
$(f_X,f_Y)$ is a dualizing element of $(\PLPIV{X}{V}{Y},\leq,\te)$.
In more details, that for any $(x,y)\in \PLPIV{X}{V}{Y}$,
$\res{\te}{(x,y)}{(f_X,f_Y)}$ exists, and it equals to $\nega{(x,y)}$ (and thus $(x,y)\mapsto\res{\te}{(x,y)}{(f_X,f_Y)}$ is of order two).
Equivalently, that
for $(x_1,y_1),(x_2,y_2)\in \PLPIV{X}{V}{Y}$,
\begin{equation}\label{tenylegResCompTOPPAL}
\g{(x_1,y_1)}{(x_2,y_2)}\leq (f_X,f_Y) \mbox{ if and only if } (x_2,y_2)\leq \nega{(x_1,y_1)} .
\end{equation}
\item[-]
Assume $y_1,y_2\in Y$. Then $x_1,x_2\in V$ hold.
Then $(x_1\ast x_2,y_1\star y_2)\leq (f_X,f_Y)$ holds
if and only if $x_1\ast x_2<_X f_X$ holds 
or
if both $x_1\ast x_2=f_X$ and $y_1\star y_2\leq_Y f_Y$ hold.
Since since 
$\negaM{\ast}{x_1}$ is the inverse of $x_1$,
the condition above is equivalent to 
$x_2<_X\negaM{\ast}{x_1}$
or 
$x_2=\negaM{\ast}{x_1}$ together with $y_2\leq_Y\negaM{\star}{y_1}$, 
which is equivalent to $(x_2,y_2)\leq(\negaM{\ast}{x_1},\negaM{\star}{y_1})=\nega{(x_1,y_1)}$.
\item[-]
Assume $y_1\in Y$ and $y_2=\top$.
Then $x_1\in V$ holds.
Since $\top>f_Y$, 
$(x_1\ast x_2,\top)\leq (f_X,f_Y)$ holds if and only if $x_1\ast x_2<_X f_X$, which is equivalent to $x_2<_X\negaM{\ast}{x_1}$ since 
$\negaM{\ast}{x_1}$ is the inverse of $x_1$, which in turn is equivalent to  
$(x_2,\top)\leq(\negaM{\ast}{x_1},\negaM{\star}{y_1})=\nega{(x_1,y_1)}$ and we are done.
The case $y_1=\top$ and $y_2\in Y$ is completely analogous.
\item[-]
Finally, assume $y_1,y_2=\top$. Then, since $\top>f_Y$, it holds true that
$$
\mbox{$(x_1\ast x_2,\top)\leq (f_X,f_Y)$ holds if and only if $x_1\ast x_2<_X f_X$.}
$$

-- If $x_1\in X_{gr}$
then $x_1\ast x_2<_X f_X$ is equivalent to $x_2<_X\negaM{\ast}{x_1}$ since then $\negaM{\ast}{x_1}$ is the inverse of $x_1$.
Since $X_{gr}$ is discretely embedded into 
$X$ and since $\negaM{\ast}{x_1}\in X_{gr}$, therefore $x_2<_X\negaM{\ast}{x_1}$ is equivalent to $x_2\leq_X(\negaM{\ast}{x_1})_\downarrow$, which is equivalent to
$(x_2,\top)\leq((\negaM{\ast}{x_1})_\downarrow,\top)=\nega{(x_1,\top)}$.
The case $x_2\in X_{gr}$ is completely analogous.

-- If $x_1,x_2\notin X_{gr}$ then $x_1\ast x_2$ cannot be equal to $f_X$, therefore $x_1\ast x_2<_X f_X$ can equivalently be written as $x_1\ast x_2\leq_X f_X$,
which is equivalent to $x_2\leq_X \negaM{\ast}{x_1}$ by residuation.
Finally, it is equivalent to $(x_2,\top)\leq(\negaM{\ast}{x_1},\top)=\nega{(x_1,\top)}$, hence the proof of (\ref{tenylegResCompTOPPAL}) is concluded.
\\
Summing up, $\te$ is residuated and  
$\res{\te}{(x_1,y_1)}{(x_2,y_2)}=\nega{\left(\g{(x_1,y_1)}{\nega{(x_2,y_2)}}\right)}$ holds. 

If $\mathbf Y$ is odd (resp.\ positive or negative rank) then $\negaM{\star}{f_Y}=f_Y$  (resp.\ $\negaM{\star}{f_Y}>f_Y$, $\negaM{\star}{f_Y}<f_Y$) and hence the dualizing element $(f_X,f_Y)$ is fixed under $\komp$ (resp.\ $\nega{(f_X,f_Y)}>(f_X,f_Y)$ or $\nega{(f_X,f_Y)}<(f_X,f_Y)$); that is 
$\PLPII{\mathbf X}{\mathbf Y}$ is odd (resp.\ positive or negative rank), too.

\item[C.]
\medskip\noindent
{\em Proof for}
$\PLPIII{\mathbf X}{\mathbf Z}{\mathbf V}{\mathbf Y}\leq\PLPI{\mathbf X}{\mathbf Z}{\mathbf Y}$ and
$\PLPIV{\mathbf X}{\mathbf V}{\mathbf Y}\leq\PLPII{\mathbf X}{\mathbf Y}$
\nopagebreak
\\
Since type I and II partial lexicographic products are particular cases of type III and IV partial lexicographic products, it follows from the previous two claims that $\PLPI{\mathbf X}{\mathbf V}{\mathbf Y}$ and $\PLPII{\mathbf X}{\mathbf Y}$ are involutive FL$_e$-algebras, too.
We shall verify that 
$\PLPIII{\mathbf X}{\mathbf Z}{\mathbf V}{\mathbf Y}\leq\PLPI{\mathbf X}{\mathbf Z}{\mathbf Y}$ and
$\PLPIV{\mathbf X}{\mathbf V}{\mathbf Y}\leq\PLPII{\mathbf X}{\mathbf Y}$.
Evidently
$\PLPIII{X}{Z}{V}{Y}\subseteq\PLPI{X}{Z}{Y}$ and 
$\PLPIV{X}{V}{Y}\subseteq\PLPII{X}{Y}$ hold.
A moment's reflection shows that 
$\PLPIII{X}{Z}{V}{Y}$ (resp. $\PLPIV{X}{V}{Y}$)
is closed under the residual complement operation of 
$\PLPI{\mathbf X}{\mathbf Z}{\mathbf Y}$ (resp.
$\PLPII{\mathbf X}{\mathbf Y}$),
and that 
%
$\PLPIII{X}{Z}{V}{Y}$ (resp. $\PLPIV{X}{V}{Y}$)
is closed under the monoidal operation of 
$\PLPI{\mathbf X}{\mathbf Z}{\mathbf Y}$ (resp.
$\PLPII{\mathbf X}{\mathbf Y}$).
From the respective definition of the residual complement operations 
it also easily follows that 
$\PLPIII{X}{Z}{V}{Y}$ (resp. $\PLPIV{X}{V}{Y}$)
is closed under the residual operation of 
$\PLPI{\mathbf X}{\mathbf Z}{\mathbf Y}$ (resp.
$\PLPII{\mathbf X}{\mathbf Y}$).
\end{proof}


\medskip\noindent{\textsc{Definition A.}}
We introduce the following notation.
Let $\mathbf A_1$, $\mathbf A_2$ and $\mathbf D$ be FL$_e$-algebras, and let $\mathbf D\leq\mathbf A_1\lex\mathbf A_2$.
If $\nu$, the projection operation to the first coordinate maps $\mathbf D$ onto $A_1$, that is, if 
$\nu(D)=\{ a_1\in A_1 : \mbox{ there exists $(a_1,a_2)\in D$ } \}=A_1$ then we denote it by
$$\mathbf D\leq_\nu\mathbf A_1\lex\mathbf A_2.$$
If $\mathbf D$ is only embedded into $\mathbf A_1\lex\mathbf A_2$ such that for its image $\varphi(\mathbf D)$ it holds true that $\varphi(\mathbf D)\leq_\nu\mathbf A_1\lex\mathbf A_2$, then we denote it by 
$$\mathbf D\hookrightarrow_\nu\mathbf A_1\lex\mathbf A_2.$$

Next, we introduce the construction for the key result of the paper, called {\em partial sublex product} 
(or partial sublex extension).

\medskip\noindent{\textsc{Definition B.}}
Adapt the notation of Definition~\ref{FoKonstrukcio}. 
Let 
$\mathbf{A}=\PLPIII{\mathbf X}{\mathbf Z}{\mathbf V}{\mathbf Y}$ and 
$\mathbf{B}=\PLPIV{\mathbf X}{\mathbf V}{\mathbf Y}$.
Then
$
A=
(V\times Y)\cup(Z\times \{\top\})\cup \left(X\times \{\bot\}\right)
$,
$
B=(V\times Y)\cup(X\times \{\top\}),
$
and the group part $\mathbf A_{\mathbf{gr}}$ of $\mathbf{A}$, as well as the group part $\mathbf B_{\mathbf{gr}}$ of $\mathbf{B}$ is the group $\mathbf{V}\lex\mathbf{Y_{\mathbf{gr}}}$, which is the subalgebra of $\mathbf{A}$ and of $\mathbf{B}$ over $V\times Y_{gr}$ in both cases.
Let
\begin{equation}\label{HHKHfelesleges}
\mathbf H\leq_\nu\mathbf{V}\lex\mathbf{Y}_{\mathbf{gr}}.
\end{equation}
Replace $\mathbf A_{\mathbf{gr}}$ in $\mathbf{A}$ and $\mathbf B_{\mathbf{gr}}$ in $\mathbf{B}$ by $\mathbf H$ to obtain $\mathbf{A}_\mathbf{H}$ and $\mathbf{B}_\mathbf{H}$, respectively.
More formally, let
$$
A_H
=
H\cup(V\times(Y\setminus Y_{gr})\cup(Z\times\{\top\}) \cup \left(X\times \{\bot\}\right)
,
$$
$$
B_H
=
H\cup(V\times(Y\setminus Y_{gr}))\cup(X\times \{\top\}).
$$
Then, by relying on Theorem~4.4, and on the fact that the product of an invertible element by a non-invertible one is non-invertible, it readily follows that
$A_H$ and $B_H$ are closed under all operations of $\mathbf A$ and $\mathbf B$ (including the residual complement operation), respectively, and thus 
$\mathbf{A}_\mathbf{H}$ is a subalgebra of $\mathbf A$ over $A_H$, and 
$\mathbf{B}_\mathbf{H}$ is a subalgebra of $\mathbf B$ over $B_H$.
Therefore, $\mathbf{A}_\mathbf{H}
$ 
and $\mathbf{B}_\mathbf{H}
$
are odd involutive FL$_e$-algebras, which cannot be constructed by the lexicographic product construction, in general.

An important particular instance is if $\mathbf Y$ is cancellative, and this is what we only need in the sequel.
Then 
$A_H$ and $B_H$ become simpler:
\begin{equation}\label{EgyszerubbUnik}
\mbox{
$
A_H
=
H\cup(Z\times\{\top\}) \cup \left(X\times \{\bot\}\right)
,
$
$
B_H
=
H\cup(X\times \{\top\})$,
}
\end{equation}
and thus both definitions become independent of $V$ since we can also equivalently assume
\begin{equation}\label{EgyszerubbReszcsoportos}
\mathbf H\leq\mathbf{Z}\lex\mathbf{Y}
\end{equation}
instead of (\ref{HHKHfelesleges}), where in the type II case $\mathbf Z=\mathbf X_\mathbf{gr}$.
For this case (when $\mathbf Y$ is cancellative) we are going to use the following notation:
$\PLPIs{\mathbf X}{\mathbf Z}{\mathbf Y}{\mathbf{H}}$ 
for
$\mathbf{A}_\mathbf{H}$,
and 
$\PLPIIs{\mathbf X}{\mathbf X_\mathbf{gr}}{\mathbf Y}{\mathbf H}$
for 
$\mathbf{B}_\mathbf{H}$,
and will
call them 
the {\em type I partial sub-lexicographic product} of $\mathbf X$, $\mathbf Z$, $\mathbf Y$ and $\mathbf H$, and 
the {\em type II partial sub-lexicographic product} of $\mathbf X$, $\mathbf Y$ and $\mathbf H$, respectively.

\bigskip

In the rest of the paper we will not use the partial sublex product construction in its full generality. 
For describing the structure of odd involutive FL$_e$-chains with only finitely-many idempotent elements, first of all, we will apply it to the linearly ordered setting only.
Second, it will be sufficient to use FL$_e$-chains with finitely many positive idempotent elements and linearly ordered abelian groups to play the role of $\mathbf X$ and $\mathbf Y$, the algebras in the first and in the second coordinate, respectively.

Keeping it in mind,
in order to ease the understanding of the rest of the paper, we make the following

\begin{remark}\label{KonnyuLesz}
Concerning Definition~\ref{FoKonstrukcio}:
\begin{itemize}
\item
Those subsets 
of $\PLPIII{X}{Z}{V}{Y}$
and 
of $\PLPIV{X}{V}{Y}$
which are of the form $\{v\}\times Y$ for $v\in V$,
will be recovered in an arbitrary odd involutive FL$_e$-chain $\mathbf X$ 
in Section~\ref{SectCollapse},
and will be called {\em convex components of the group part of $\mathbf X$}.

\item
The inverse operation of the enlargement of each element of $V$ by $Y$ will be done in Section~\ref{SectCollapse} by a homomorphism $\beta$ which collapses each convex component of the group part of $\mathbf X$ into a singleton.

\item
In case the residual complement of the smallest strictly positive idempotent element of $\mathbf X$ is also idempotent then the homomorphism $\gamma\circ\beta$ will collapse each convex component of the group part of $\mathbf X$ {\em together with its infimum and supremum} (in other words, together with its extremals, see below) into a singleton in Section~\ref{OneDecIdemp}.

\item
Those elements of 
$\PLPIII{X}{Z}{V}{Y}$ which are of the form $(v,\top)$ or $(v,\bot)$ for $v\in V$,
and 
those elements of
$\PLPIV{X}{V}{Y}$ which are of the form $(v,\top)$ for $v\in V$
will be recovered in an arbitrary odd involutive FL$_e$-chain in Section~\ref{SectExtremals},
and will be called {\em extremals}.

\item
Those elements of 
$\PLPIII{X}{Z}{V}{Y}$ which are of the form $(z,\top)$ or $(z,\bot)$ for $z\in Z\setminus V$,
and 
those elements of
$\PLPIV{X}{V}{Y}$ which are of the form $(z,\top)$ for $z\in X_{gr}\setminus V$
will be recovered in an arbitrary odd involutive FL$_e$-chain in Section~\ref{SectGaps},
and will be called {\em pseudo extremals}.
We will call these elements {\em pseudo} extremals, since there is no convex component in the algebra
$\PLPIII{\mathbf X}{\mathbf Z}{\mathbf V}{\mathbf Y}$ and 
$\PLPIV{\mathbf X}{\mathbf V}{\mathbf Y}$
for which they would serve as infimum or supremum (i.e., extremal).
Their convex component could be there (if it were we would talk about a type I or a type II extension) but it is \lq missing\rq\ (and thus we have a type III or a type IV extension, only a subalgebra of the related type I or a type II extension, see the last statement in Theorem~\ref{SubLexiTheo}).
\end{itemize}
\end{remark}

\section{Subuniverses}\label{decompsection}
{\em Sketch of the main theorem (Theorem~\ref{Hahn_type}) and its proof.}
For any odd involutive FL$_e$-chain $\mathbf X$, such that there exists its smallest strictly positive idempotent element, we introduce two different decomposition methods in  Lemmas~\ref{GammaHomo} and \ref{decompXtaugeq}.
The algebra will be decomposed by either the first or the second variant depending on whether the residual complement of its smallest strictly positive idempotent element is idempotent or not. 

\medskip
If the residual complement of the smallest strictly positive idempotent element is idempotent then 
\begin{enumerate}
\item
we will define two consecutive homomorphisms $\beta$ and $\gamma$, and will isolate a homomorphic image $\gamma(\beta(\mathbf X))$ of $\mathbf X$,
which has one less positive idempotent elements then $\mathbf X$.
In this sense $\gamma(\beta(\mathbf X))$ will be smaller than the original algebra $\mathbf X$.
\item
We will also isolate two subgroups of $\gamma(\beta(\mathbf X))$, called $\gamma(\beta(\mathbf X_{\tau\geq u}^E))$ and $\gamma(\beta(\mathbf X_{\tau\geq u}^{E_c}))$.
\item
Finally we will recover $\mathbf X$, up to isomorphism, as the type I partial sublex product of $\gamma(\beta(\mathbf X))$, $\gamma(\beta(\mathbf X_{\tau\geq u}^E))$, $\overline{\mathbf{ker}_\beta}$, and a subgroup $\mathbf G$ of 
$\gamma(\beta(\mathbf X_{\tau\geq u}^{E_c}))\lex\overline{\mathbf{ker}_\beta}$,
in Section~\ref{OneDecIdemp}.
\end{enumerate}

If the residual complement of the smallest strictly positive idempotent element is not idempotent then 
\begin{enumerate}
\item
we will isolate a residuated subsemigroup $\mathbf X_{\tau\geq u}$ of $\mathbf X$, which will be an odd involutive FL$_e$-algebra, albeit not a subalgebra of $\mathbf X$, since its unit element and residual complement operation will differ from 
those of $\mathbf X$.
$\mathbf X_{\tau\geq u}$ has one less positive idempotent element then $\mathbf X$, so here too, $\mathbf X_{\tau\geq u}$ will be smaller than the original algebra $\mathbf X$.
\item
Then we prove that the group-part of $\mathbf X_{\tau\geq u}$ is discretely embedded into $\mathbf X_{\tau\geq u}$, and we isolate a subgroup ${\mathbf X_{\tau\geq u}^{T_c}}$ of it. 
\item
Finally we will recover $\mathbf X$, up to isomorphism, as the type II partial sublex product of $\mathbf X_{\tau\geq u}$, ${\mathbf X_{\tau\geq u}^T}$, $\mathbf{ker}_\beta$, and a subgroup $\mathbf G$ of 
${\mathbf X_{\tau\geq u}^{T_c}}\lex\overline{\mathbf{ker}_\beta}$,
in Section~\ref{OneStepNotIdemp}.
\end{enumerate}

Therefore, if the original odd involutive FL$_e$-chain $\mathbf X$ has finitely many positive idempotents then (since then there exists its {\em smallest} strictly positive idempotent) we can  reconstruct $\mathbf X$ from a smaller odd involutive FL$_e$-chain via enlarging it by a linearly ordered abelian group (either type I or II in Sections~\ref{OneDecIdemp} and \ref{OneStepNotIdemp}, respectively).
By applying this decomposition iteratively to the obtained smaller algebra we end up with an odd involutive FL$_e$-chain with a single idempotent element, which is a linearly ordered abelian group by Theorem~\ref{csoportLesz}.
Summing up, any odd involutive FL$_e$-chain which has only finitely many positive idempotent elements will be described by the consecutive application of the partial sublex product construction using only linearly ordered abelian groups as building blocks; as many of them as the number of the positive idempotent elements of $\mathbf X$.

\smallskip

To this end first we need the classification of elements of $\mathbf X$ as described in Definition~\ref{def A referred}.

\smallskip

It is well-known that for any involutive FL$_e$-chain, if $c$ and $\nega{c}$ are idempotent and the interval between them contains the two constants then there is a subalgebra over that interval.
However, in order to describe the structure of odd involutive FL$_e$-chains these subalgebras will not be convenient to consider, they are too small. Rather, it will be wiser to put all elements of $X$ sharing the {\em same $\tau$-value} into one class:

\begin{definition}\label{def A referred} 
\rm
For an odd involutive FL$_e$-chain $( X, \leq, \te, \ite{\te}, t, f )$, for $u\geq t$ and $\Box\in\{<,=,\geq\}$ denote 
\begin{eqnarray*}
X_{\tau \mathbin{\Box} u} & = & \{x\in X : \tau(x) \mathbin{\Box} u\}.
\end{eqnarray*}
\end{definition}

\noindent
The definition above reveals the true nature of $\tau$:
note that $\tau$ acts as a kind of \lq local unit\rq\ function: $\tau(z)$ is the unit element for the subset $X_{\tau\geq \tau(z)}$ of $X$.

\begin{proposition}\label{elozetes}
Let ${\mathbf X}=( X, \leq, \te, \ite{\te}, t, f )$ be an odd involutive FL$_e$-chain.
Let $u>t$ be idempotent.
\begin{enumerate}
\item\label{subunis}
$X_{\tau<u}$ and $X_{\tau=t}$ 
are nonempty subuniverses. 
\item\label{nagyonSzigoru}
$X_{\tau=t}=X_{gr}$ and hence $X_{\tau=t}$ is a nonempty subuniverse of a linearly ordered abelian group.
For $x\in X_{\tau=t}$, the map $y\mapsto \g{y}{x}$ $(y\in X)$ is strictly increasing.
\end{enumerate}
\end{proposition}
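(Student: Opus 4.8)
The plan is to derive everything from Proposition~\ref{zeta_1} together with the identity $\g{z}{\nega z}=\nega{\tau(z)}$, which is precisely the reformulation of $\tau(z)=\res{\te}{z}{z}$ used (via (\ref{eq_quasi_inverse})) inside the proof of Proposition~\ref{zeta_1}\ref{item_zetavege_idempotens}. First I would dispose of the routine part of claim~\ref{subunis}. Both $X_{\tau<u}$ and $X_{\tau=t}$ contain $t$, since $t$ is a positive idempotent and hence $\tau(t)=t$ by Proposition~\ref{zeta_1}\ref{ZetaOfIdempotent}; thus they are nonempty and contain the constant $f=t$. On a chain they are trivially closed under $\wedge$ and $\vee$ (the result is one of the two arguments). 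They are closed under the residual complement because $\tau(\nega x)=\tau(x)$ by Proposition~\ref{zeta_1}\ref{FENTszimmetrikus}, and, since $\res{\te}{x}{y}=\nega{(\g{x}{\nega y})}$ by (\ref{eq_quasi_inverse}), closure under $\ite{\te}$ reduces to closure under the residual complement and under $\te$. So the whole of claim~\ref{subunis} reduces to closure under $\te$.

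For closure under $\te$ the crucial step is the upper estimate $\tau(\g{x}{y})\leq\nega{\left(\g{\nega{\tau(x)}}{\nega{\tau(y)}}\right)}$. To obtain it, note $\g{(\g{x}{y})}{(\g{\nega x}{\nega y})}=\g{\nega{\tau(x)}}{\nega{\tau(y)}}\leq f$, the last inequality because $\nega{\tau(x)}\leq t$ forces $\g{\nega{\tau(x)}}{\nega{\tau(y)}}\leq\nega{\tau(y)}\leq t=f$. Hence $\g{\nega x}{\nega y}\leq\nega{(\g{x}{y})}$ by maximality of the residual complement; multiplying by $\g{x}{y}$ and using $\g{(\g{x}{y})}{\nega{(\g{x}{y})}}=\nega{\tau(\g{x}{y})}$ gives $\nega{\tau(\g{x}{y})}\geq\g{\nega{\tau(x)}}{\nega{\tau(y)}}$, and applying the (order-reversing) residual complement yields the estimate. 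For $X_{\tau=t}$ this already finishes the job: the right-hand side is $\nega{(\g{\nega t}{\nega t})}=\nega{(\g{t}{t})}=\nega t=t$, so $\tau(\g{x}{y})\leq t$, i.e.\ $\g{x}{y}\in X_{\tau=t}$.

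The case of $X_{\tau<u}$ is where the real work lies, and I expect this to be the main obstacle: the estimate only gives $\tau(\g{x}{y})\leq\nega{\left(\g{\nega{\tau(x)}}{\nega{\tau(y)}}\right)}$, so I must show this bound stays strictly below $u$, i.e.\ that $\g{\nega{\tau(x)}}{\nega{\tau(y)}}>\nega u$. Writing $a=\nega{\tau(x)}$, we have $a>\nega u$ and $a\leq t$ (since $t\leq\tau(x)<u$). By residuation, using $f=t$, the inequality $\g{a}{\nega{\tau(y)}}\leq\nega u$ is equivalent to $\g{u}{a}\leq\tau(y)$; so it suffices to refute the latter. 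Here the idempotency of $u$ enters: $\g{u}{a}\geq\g{t}{a}=a>\nega u$, while $\g{u}{(\g{u}{a})}=\g{u}{a}$ gives $\tau(\g{u}{a})\geq u$. On a chain, any $w$ with $\tau(w)\geq u$ satisfies $w\geq u$ or $w\leq\nega u$ (if $w\geq t$ then $w\geq\tau(w)\geq u$ by Proposition~\ref{zeta_1}\ref{item_boundary_zeta}, and the case $w\leq t$ follows by applying this to $\nega w$ via Proposition~\ref{zeta_1}\ref{FENTszimmetrikus}). Since $\g{u}{a}>\nega u$, the second alternative is excluded, so $\g{u}{a}\geq u>\tau(y)$, contradicting $\g{u}{a}\leq\tau(y)$. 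Thus $\tau(\g{x}{y})<u$, completing closure and hence claim~\ref{subunis}.

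Finally, for claim~\ref{nagyonSzigoru} I would first identify $X_{\tau=t}=X_{gr}$: by $\g{z}{\nega z}=\nega{\tau(z)}$ and oddness ($\nega t=t$), $\tau(x)=t$ holds iff $\g{x}{\nega x}=t$, i.e.\ iff $\nega x$ is the inverse of $x$, which is exactly membership in $X_{gr}$. By claim~\ref{subunis} this set is a subuniverse on which every element is invertible, hence $\te$ is cancellative there, so Theorem~\ref{csoportLesz} turns it into a lattice-ordered abelian group, which is linearly ordered as a subchain. For the strict monotonicity statement, fix $x\in X_{gr}$ and $y_1<y_2$: monotonicity gives $\g{y_1}{x}\leq\g{y_2}{x}$, and equality would, after multiplying by the inverse $\nega x$ and using $\g{x}{\nega x}=t$, force $y_1=y_2$, a contradiction; hence $y\mapsto\g{y}{x}$ is strictly increasing.
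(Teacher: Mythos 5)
Your proposal is correct, but it reaches claim~(\ref{subunis}) by a genuinely different route than the paper. The paper's proof disposes of the closure of $X_{\tau<u}$ and $X_{\tau=t}$ under $\te$ and $\ite{\te}$ in one line by invoking Lemma~\ref{tau_lemma}, whose exact identity $\tau(\g{x}{y})=\max(\tau(x),\tau(y))$ makes both closures immediate; that lemma in turn rests on Proposition~\ref{diagonalSZIGORU} (diagonally strict increase). You instead derive only the one-sided estimate $\tau(\g{x}{y})\leq\nega{\left(\g{\nega{\tau(x)}}{\nega{\tau(y)}}\right)}$ from the identity $\g{z}{\nega{z}}=\nega{\tau(z)}$ and adjointness, which settles $X_{\tau=t}$ at once, and then you close $X_{\tau<u}$ by a separate argument: residuation converts the failure of the needed strict bound into $\g{u}{a}\leq\tau(y)$ for $a=\nega{\tau(x)}$, which you refute using the idempotency of $u$, claim~(\ref{item_boundary_zeta}) in Proposition~\ref{zeta_1}, and the chain hypothesis (every $w$ with $\tau(w)\geq u$ lies outside $]\nega{u},u[$). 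All of these steps check out. What your route buys is independence from the $\tau$-lemma and the diagonal-strict-increase proposition, so your proof of Proposition~\ref{elozetes} could stand before them and uses only Proposition~\ref{zeta_1} and the basic identities; the cost is that your inequality is strictly weaker than the max-identity, which the paper needs repeatedly later anyway and therefore establishes once and for all, and your closure argument for $X_{\tau<u}$ is bespoke rather than an instance of that general fact. For claim~(\ref{nagyonSzigoru}) -- the identification $X_{\tau=t}=X_{gr}$ via $\g{x}{\nega{x}}=\nega{\tau(x)}$, the group structure via Theorem~\ref{csoportLesz}, and strict monotonicity of $y\mapsto\g{y}{x}$ by cancelling with the inverse $\nega{x}$ -- your argument coincides with the paper's up to cosmetic differences.
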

The respective subalgebras of $\mathbf X$ will be denoted by $\mathbf X_{\tau<u}$ and $\mathbf X_{\tau=t}=\mathbf X_\mathbf{gr}$. 
In the proof of Proposition~\ref{elozetes} and also later a key role will be played by Lemma~\ref{tau_lemma}. To prove it, it will be useful to observe first 
\begin{proposition}\label{diagonalSZIGORU}{{\bf (Diagonally Strict Increase)}}
For any odd involutive FL$_e$-chain $( X, \leq, \te, \ite{\te}, t, f )$, 
if $x_1>x$ and $y_1>y$ then 
$$
\g{x_1}{y_1}>\g{x}{y}  .
$$
\end{proposition}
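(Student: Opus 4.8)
The plan is to combine the two strict increments by routing them through the residual complement, relying on inequality (\ref{eq_feltukrozes}). The first thing to observe is that one \emph{cannot} argue one coordinate at a time: multiplication by a fixed element is in general not injective (nontrivial stabilizers may be present), so neither $\g{x_1}{y}>\g{x}{y}$ nor $\g{x}{y_1}>\g{x}{y}$ need hold, and both hypotheses $x_1>x$ and $y_1>y$ must be used simultaneously. This is the main obstacle, and the device that overcomes it is a single strict fact extracted from the involution.

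The key preliminary observation I would establish is: whenever $b_1>b$, one has $\g{b_1}{\nega{b}}>t$. Indeed, since the algebra is odd we have $f=t$, and by involutivity $b=\nega{(\nega{b})}=\res{\te}{\nega{b}}{t}=\max\{v\mid\g{\nega{b}}{v}\leq t\}$. As $b_1>b$ strictly exceeds this greatest element, $b_1$ cannot belong to that set, so $\g{\nega{b}}{b_1}\not\leq t$, and linearity forces $\g{b_1}{\nega{b}}>t$.

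Applying this to both pairs gives $\g{x_1}{\nega{x}}>t$ and $\g{y_1}{\nega{y}}>t$. Since a product of two elements exceeding $t$ again exceeds $t$ (if $a,b>t$ then $\g{a}{b}\geq\g{a}{t}=a>t$ by isotonicity), I obtain $\g{(\g{x_1}{\nega{x}})}{(\g{y_1}{\nega{y}})}>t=f$. Rearranging by commutativity and associativity, this reads $\g{(\g{x_1}{y_1})}{(\g{\nega{x}}{\nega{y}})}>f$, so by adjointness $\g{x_1}{y_1}\not\leq\res{\te}{(\g{\nega{x}}{\nega{y}})}{f}=\nega{(\g{\nega{x}}{\nega{y}})}$, whence $\g{x_1}{y_1}>\nega{(\g{\nega{x}}{\nega{y}})}$ by linearity.

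Finally, oddness yields $t\geq f$, so (\ref{eq_feltukrozes}) applies to the lower pair and gives $\g{x}{y}\leq\nega{(\g{\nega{x}}{\nega{y}})}$. Chaining the two inequalities produces $\g{x}{y}\leq\nega{(\g{\nega{x}}{\nega{y}})}<\g{x_1}{y_1}$, which is the desired strict inequality. In effect, the quantity $\nega{(\g{\nega{x}}{\nega{y}})}$ serves as a pivot: inequality (\ref{eq_feltukrozes}) pins it \emph{above} $\g{x}{y}$, while the strict fact $\g{b_1}{\nega{b}}>t$, applied in both coordinates at once, pushes $\g{x_1}{y_1}$ \emph{strictly above} the same pivot.
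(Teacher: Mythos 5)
Your proof is correct and takes essentially the same route as the paper's: both use (\ref{eq_feltukrozes}) to reduce the claim to comparing $\g{x_1}{y_1}$ with the pivot $\nega{\left(\g{\nega{x}}{\nega{y}}\right)}$, and both rest on the strict fact that $b_1>b$ forces $\g{b_1}{\nega{b}}>t$ (via residuation and linearity) together with the observation that a product of elements above $t$ stays above $t$. The only difference is presentational — the paper phrases the second half as a proof by contradiction, while you run the same inequalities forward.
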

\begin{proof}
By (\ref{eq_feltukrozes}), $\nega{\left(\g{\nega{x}}{\nega{y}}\right)}\geq\g{x}{y}$ holds, hence it suffices to prove $\g{x_1}{y_1}>\nega{\left(\g{\nega{x}}{\nega{y}}\right)}$.
Assume the opposite, which is $\g{x_1}{y_1}\leq\res{\te}{\left(\g{\nega{x}}{\nega{y}}\right)}{f}$ since $(X,\leq)$ is a chain.
By adjointness from this
we obtain $\g{(\g{\nega{x}}{x_1})}{(\g{\nega{y}}{y_1})}=\g{(\g{x_1}{y_1})}{(\g{\nega{x}}{\nega{y}})}\leq f=t$.
On the other hand, from $x_1>x=\nega{(\nega{x})}=\res{\te}{\nega{x}}{f}$ it follows by residuation that $\g{\nega{x}}{x_1}\not\leq f$, which is equivalent to $\g{\nega{x}}{x_1}>f=t$ since $(X,\leq)$ is a chain.
Analogously we obtain $\g{\nega{y}}{y_1}>t$.
Therefore $\g{(\g{\nega{x}}{x_1})}{(\g{\nega{y}}{y_1})}\geq \g{(\g{\nega{x}}{x_1})}{t}=\g{\nega{x}}{x_1}>t$ follows, which is a contradiction.
\end{proof}
Claims~(\ref{FENTszimmetrikus}) and (\ref{lemasoljaGENzeta}) in Proposition~\ref{zeta_1} show how $\tau$ behaves in relation to $\komp$ and $\te$, respectively. 
If the algebra is odd we can state more.

\begin{lemma}\label{tau_lemma}{\bf {($\tau$-lemma)}}
Let $( X, \leq, \te, \ite{\te}, t, f )$ be an odd involutive FL$_e$-chain and 
$A$ be an expression which contains only the operations $\te$, $\ite{\te}$ and \, $\komp$.
For any evaluation $e$ of the variables and constants of $A$ into $X$, $\tau(e(A))$ equals the maximum of the $\tau$-values of the $e$-values of the variables and constants of $A$.
\end{lemma}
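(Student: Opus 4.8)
The plan is to prove the statement by structural induction on the expression $A$, after first isolating the one genuinely nontrivial ingredient, namely how $\tau$ interacts with the monoidal product.

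First I would dispose of the base case and of the two \lq easy\rq\ operation cases by reducing them to the product case. If $A$ is a single variable or constant the claim is trivial. If $A=\nega{B}$, then claim~(\ref{FENTszimmetrikus}) of Proposition~\ref{zeta_1} gives $\tau(e(A))=\tau(e(B))$, and since $A$ and $B$ involve the same variables and constants the inductive hypothesis on $B$ settles this case. If $A=\res{\te}{B}{C}$, then (\ref{eq_quasi_inverse}) yields $e(A)=\nega{(\g{e(B)}{\nega{e(C)}})}$, so claim~(\ref{FENTszimmetrikus}) gives $\tau(e(A))=\tau(\g{e(B)}{\nega{e(C)}})$; as $\nega{e(C)}$ has the same $\tau$-value as $e(C)$ by the same claim, the residuum case will follow from the product case. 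Hence everything rests on the key product identity
\begin{equation*}
\tau(\g{x}{y})=\max\big(\tau(x),\tau(y)\big)\qquad(x,y\in X),
\end{equation*}
together with the observation that the variables and constants occurring in $\g{B}{C}$ (and in $B\ite{\te}\nega{C}$) are exactly those of $B$ together with those of $C$, so that the inductive hypothesis applied to $B$ and $C$ converts the right-hand maximum into the maximum over all leaves of $A$.

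For the product identity, the inequality $\tau(\g{x}{y})\geq\max(\tau(x),\tau(y))$ is immediate from claim~(\ref{lemasoljaGENzeta}) of Proposition~\ref{zeta_1} and the commutativity of $\te$. The reverse inequality is the step I expect to be the main obstacle, and I would prove it by contradiction. Set $w=\tau(\g{x}{y})$ and suppose $w>\max(\tau(x),\tau(y))$. By claim~(\ref{item_zetavege_idempotens}) we have $\tau(w)=w$, so by (\ref{tnelnagyobb}) $w\geq t$, and then claim~(\ref{ZetaOfIdempotent}) makes $w$ idempotent. Isotonicity of $\te$ gives $\g{w}{x}\geq x$, while $\g{w}{x}=x$ is impossible, since it would place $w$ in $Stab_x$ and hence force $w\leq\tau(x)$, against the assumption. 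Thus $\g{w}{x}>x$, and symmetrically $\g{w}{y}>y$.

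Now the Diagonally Strict Increase property (Proposition~\ref{diagonalSZIGORU}), applied to $\g{w}{x}>x$ and $\g{w}{y}>y$, gives $\g{(\g{w}{x})}{(\g{w}{y})}>\g{x}{y}$. On the other hand, using associativity, commutativity, the idempotency $\g{w}{w}=w$, and the defining property $\g{w}{(\g{x}{y})}=\g{x}{y}$ of $w=\tau(\g{x}{y})$ coming from (\ref{tnelnagyobb}), the left-hand side collapses:
\begin{equation*}
\g{(\g{w}{x})}{(\g{w}{y})}=\g{(\g{w}{w})}{(\g{x}{y})}=\g{w}{(\g{x}{y})}=\g{x}{y},
\end{equation*}
contradicting $\g{x}{y}>\g{x}{y}$. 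This establishes the reverse inequality, hence the product identity, and with it the induction is complete.
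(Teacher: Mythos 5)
Your proof is correct and follows essentially the same route as the paper's: both reduce the lemma by structural induction (using (\ref{eq_quasi_inverse}) and claim~(\ref{FENTszimmetrikus}) of Proposition~\ref{zeta_1}) to the key identity $\tau(\g{x}{y})=\max(\tau(x),\tau(y))$, and both prove the nontrivial inequality by contradiction via Proposition~\ref{diagonalSZIGORU}. The only immaterial difference is in the witness used for the contradiction: the paper takes an arbitrary $z\in\,]\max(\tau(x),\tau(y)),\tau(\g{x}{y})]$ and applies its stabilizing property twice, whereas you take $w=\tau(\g{x}{y})$ itself and use its idempotency, which comes to the same computation.
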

\begin{proof}
First we claim $\tau(\g{x}{y})=\max(\tau(x),\tau(y))$ for $x,y\in X$.
By claim~(\ref{lemasoljaGENzeta}) in Proposition~\ref{zeta_1}, $\tau(\g{x}{y})\geq\max(\tau(x),\tau(y))$ holds. 
Assume $\tau(\g{x}{y})>\max(\tau(x),\tau(y))$ and let $z\in ]\max(\tau(x),\tau(y)),\tau(\g{x}{y})](\neq\emptyset)$ be arbitrary. 
By (\ref{tnelnagyobb}) it holds true that $t\leq\tau(x)<z$, hence the isotonicity of $\te$ it holds true that $x=\g{t}{x}\leq\g{z}{x}$ and $y=\g{t}{y}\leq\g{z}{y}$.
Since $\tau$ assigns to $x$ the greatest element of the stabilizer set of $x$, and $z>\max(\tau(x),\tau(y))$, 
it follows that $z$ does not stabilize $x$ or $y$.
Consequently, $x<\g{z}{x}$ and $y<\g{z}{y}$ should hold.
Since $t<z$ and $z\leq \tau(\g{x}{y})$ yields $z\in Stab_{\g{x}{y}}$, 
it follows that $\g{(\g{z}{x})}{(\g{z}{y})}=\g{(\g{(\g{x}{y})}{z})}{z}=\g{(\g{x}{y})}{z}=\g{x}{y}$, a contradiction to Proposition~\ref{diagonalSZIGORU}. This settles the claim.
\\
By (\ref{eq_quasi_inverse}), any expression which contains only the connectives $\te$, $\ite{\te}$ and \, $\komp$
can be represented by an equivalent expression using the same variables and constants but containing only $\te$ and $\komp$. 
An easy induction on the recursive structure of this equivalent expression using the claim above and claim~(\ref{FENTszimmetrikus}) in Proposition~\ref{zeta_1} concludes the proof.
\end{proof}

\begin{proof} {\em of Proposition~\ref{elozetes}.}
Let $Z\in\{ X_{\tau<u} ,  X_{\tau=t} \}$.
Then $t\in Z$, and Lemma~\ref{tau_lemma} ensures that $Z$ is closed under $\te$ and $\ite{\te}$; 
thus claim~(\ref{subunis}) is verified.
For $x\in X_{\tau=t}$, $\g{x}{\nega{x}}=\nega{(\res{\te}{x}{x})}=\nega{\tau(x)}=\nega{\nega{t}}=t$ holds by (\ref{eq_quasi_inverse}) and (\ref{tnelnagyobb}), and thus $x\in X_{gr}$, too.
For $x\in X_{gr}$, 
if $\g{x}{z}=\g{x}{t}$ then $\g{x^{-1}}{\g{x}{z}}=\g{x^{-1}}{\g{x}{t}}$, that is $z=t$. Hence $\tau(x)=t$, and thus $x\in X_{\tau=t}$ too.
These prove $X_{\tau=t}=X_{gr}$.
The rest of claim~\ref{nagyonSzigoru} follows from the cancellation property in groups.
\end{proof}

\section{Collapsing the convex components of the group part of $\mathbf X$ into singletons -- the homomorphism $\beta$}\label{SectCollapse}

An abelian group $\mathbf A$ is called {\em divisible} if for any $g\in A$ and $n$ positive integer there exists $a\in A$ such that $\underbrace{\g{a}{\g{\ldots}{a}}}_n=g$.
For any abelian group $\mathbf A$ there exists a minimal divisible abelian group $\overline{\mathbf A}$ containing $\mathbf A$.
$\overline{\mathbf A}$ is uniquely determined up to isomorphism and called the {\em divisible hull} (or {\em injective hull}) of $\mathbf A$ (\cite{FuchsAbelianGroups}).

\smallskip
The next lemma and its proof (unlike the rest of the paper) is written in the usual additive notation of the theory of abelian groups.


\medskip\noindent{\textsc{Lemma A.}}
Let $G$ be a linearly ordered abelian group and $C$ a convex subgroup of $G$. 
Then 
$G \hookrightarrow_\nu G/C\lexADD \overline{C}$. 
\begin{proof}
Consider $L=\langle G,\overline{C}\rangle$. 
It is straightforward to check that the linear order of $G$ extends to a linear linear order of $L$ by letting, for $x,y\in L$, $x\leq y$ iff $lkg+lkc\leq klh+kld$, where $x=g+c$, $y=h+d$, $a,b\in G$, $c,d\in\overline{C}$, $k,l\in\mathbb Z^+$, $kc,ld\in C$.
\\
$\overline{C}$ is convex in $L$: Let $x,y\in\overline{C}$ and $a\in L$ such that $x<a<y$.
There exist $n,m\in\mathbb Z^+$ such that $nx,my\in C$,
there exist $g\in G$, $d\in\overline{C}$ such that $a=g+d$, and
there exists $k\in\mathbb Z^+$ such that $kd\in C$.
It follows that $C\ni kmnx-nmkd<nmkg<knmy-nmkd\in C$ and $nmkg\in G$.
Since $C$ is convex in $G$, it yields $nmkg\in C$ thus ensuring $g\in\overline{C}$ and hence $a=g+d\in\overline{C}$. 
\\
Since $\overline{C}$ is a divisible subgroup of $L$, and divisible subgroups are known to be direct summands, there exists $M\leq L$ such that $L=M\oplus\overline{C}$.
Both $M$ and $\overline{C}$, being subgroups of $L$, are linearly ordered by the ordering of $L$,
giving rise to consider the lexicographic ordering on $M\oplus\overline{C}$.
It coincides with $\leq$.
Indeed, let $x,y\in L$ with $x=a+c$, $y=b+d$, $a,b\in M$, $c,d\in\overline{C}$.
Contrary to the statement assume that $x$ is larger than $y$ in the lexicographic order.
Then either $a=b$ and $c> d$ which contradicts to $x\leq y$, or $a> b$.
In the latter case $\overline{C}\ni d-c\geq a-b\geq 0\in\overline{C}$ follows from $x\leq y$, yielding $a-b\in\overline{C}$ since $\overline{C}$ is convex in $L$.
Therefore $a-b\in M\cap\overline{C}=\{0\}$, hence $a=b$, a contradiction.

Therefore, $L=M\lexADD\overline{C}$.
By the first isomorphism theorem for ordered abelian groups, $M\cong L/\overline{C}$. 
In addition, $L/\overline{C}\cong G/C$ since $\varphi$ is a natural order isomorphism from $G/C$ to $L/\overline{C}$ given by $\varphi(g+C)=g+\overline{C}$:
$\varphi$ is is well-defined and injective
since $C\subseteq\overline{C}$ and since cosets of subgroups are known to be either disjoint or equal;
$\varphi$ is surjective, too, 
since for $L\ni a=g+c$, $g\in G$, $c\in C$ it holds true that 
$\varphi(g+C)=g+\overline{C}=a-c+\overline{C}=a+\overline{C}$ since $C\subseteq\overline{C}$; 
and finally, $\varphi$ trivially preserves addition and the ordering. Summing up,
$
G\leq L=M\lexADD\overline{C}\cong G/C\lexADD\overline{C}
$.
To see that the projection operation of $M\lexADD \overline{C}$ to the first coordinate maps $G$ onto $M$ let $a\in M$.
Since $M\leq L=\langle G,\overline{C}\rangle$, $a=g+c$ for some $g\in G$ and $c\in\overline{C}$.
Therefore, the unique decomposition of $g$ is $g=a-c$ and hence its projection to the first coordinate is $a$.
\end{proof}

Roughly, in algebra under a natural homomorphisms each element is mapped into its class, and the classes are often of the same size. 
In Definition~\ref{ElsoCollapsusDefi} it is not the case: some elements will be mapped into their class (into their respective convex component with respect to $\tau< u$, see below), while on other elements the map will be the identity map.
In order to keep notation as simple as possible, these elements will be mapped into sets, too, namely to a singleton containing the element itself.

\begin{definition}\label{ElsoCollapsusDefi}\rm
Let $( X, \leq, \te, \ite{\te}, t, f )$ be an odd involutive FL$_e$-chain with residual complement $\komp$, $u>t$ idempotent.
For $x,y\in X_{\tau<u}$, define $x\sim y$ if $z\in X_{\tau<u}$ holds for any $z\in X$ with $x<z<y$.
It is an equivalence relation on $X_{\tau<u}$ since the order is linear. Denote the component of $x$ by $[x]_{\tau<u}$ and call it the convex component of $x$ with respect to $\tau< u$.
If $u$ is clear from the context we shall simply write $[x]$.

For $z\in X$ let
$$
\beta(z)=\left\{
\begin{array}{ll}
[z]_{\tau<u} & \mbox{if $z\in X_{\tau<u}$}\\
\{z\} & \mbox{if $z\in X_{\tau\geq u}$}
\end{array}
\right. .
$$
For $Z\subseteq X$
let
$\beta(Z)=\{\beta(z) : z\in Z\}$. 
For $x,y\in\beta(X)$ let $p\in x, q\in y$
(equivalently, let $p,q\in X$, $x=\beta(p)$ and $y=\beta(q)$), 
and over $\beta(X)$ define
\begin{eqnarray}
& x\leq_{\beta}y & \mbox{iff }  p\leq q, \label{betas_rendezese}\\
& \gbeta{x}{y} & =  \beta(\g{p}{q}), \label{betas_szorzatos}\\
& \res{\beta}{x}{y} & =  \beta(\res{\te}{p}{q}), \label{betas_implikacios}\\
& \negaM{\beta}{x} & =  \beta(\nega{p}) \label{betas_nega}.
\end{eqnarray}
Finally, let $\beta(\mathbf X)=(\beta(X),\leq_\beta,\tebeta, \ite{\beta}, \beta(t),\beta(f))$.
\end{definition}

Although the constructions will be different for the two cases (depending on the idempotency of  the residual complement of the smallest strictly positive idempotent element in $\mathbf X$),
in both cases the proof relies on Lemma~\ref{faktorok}. 
The map $\beta$ which makes each convex component of $\mathbf X_\mathbf{gr}$ collapse into a singleton (its convex component with respect to $\tau< u$), and leaves the rest of the algebra unchanged, will be shown to be a homomorphism.

\begin{lemma}\label{faktorok}
Let ${\mathbf X}=( X, \leq, \te, \ite{\te}, t, f )$ be an odd involutive FL$_e$-chain with residual complement $\komp$, and $u>t$ be idempotent.
\begin{enumerate}
\item\label{<_Xx6}
$\beta$ is a homomorphism from $\mathbf X$ to $\beta(\mathbf X)$, hence\\
$\beta(\mathbf X)$ is an odd involutive FL$_e$-chain with involution $\kompM{\beta}$,

\item\label{<_X1}
$\beta(\mathbf X_{\mathbf{gr}}):=(\beta(X_{gr}),\leq_\beta,\tebeta, \ite{\beta}, \beta(t), \beta(f))$ is a subgroup of $\mathbf X$.

\item\label{Akozepe}
$[t]_{\tau<u}=\, ]\nega{u},u[$ and it is a nonempty subuniverse of $\mathbf X$; denote the related subalgebra by $\mathbf{ker}_\beta$.

\item\label{lexikoS}
If $u$ is the smallest strictly positive idempotent element then
\begin{enumerate}
\item
$\mathbf X_{\tau<u}=\mathbf X_\mathbf{gr}$,
\item
$\mathbf{ker}_\beta$
is a convex subgroup of $\mathbf X_\mathbf{gr}$, 
\item
$\mathbf X_\mathbf {gr} \hookrightarrow_\nu   \beta(\mathbf X_{\mathbf{gr}})\lex\overline{\mathbf{ker}_\beta}$ \footnote{Here $\lex$ is the multiplicative notation of $\lexADD$ of Lemma~A.}
\end{enumerate}
\end{enumerate}
\end{lemma}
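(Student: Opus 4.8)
The entire lemma rests on part~(\ref{<_Xx6}), and the only genuine content there is that the definitions $(\ref{betas_rendezese})$--$(\ref{betas_nega})$ are independent of the chosen representatives; granting this, $\beta$ is a homomorphism by construction and $\beta(\mathbf X)$ inherits linearity of the order, oddness (since $\beta(t)=\beta(f)$ as $t=f$) and involutivity. Write $p\equiv p'$ for $\beta(p)=\beta(p')$; then $\equiv$ is equality on the singleton part $X_{\tau\geq u}$ and coincides with $\sim$ on $X_{\tau<u}$. Well-definedness of $\leq_\beta$ is immediate because distinct convex components are order-separated, and well-definedness of $\kompM{\beta}$ follows because $\komp$ is an order-reversing involution preserving $X_{\tau<u}$ (as $\tau(\nega z)=\tau(z)$ by claim~(\ref{FENTszimmetrikus}) of Proposition~\ref{zeta_1}), hence permuting its components; the residual case reduces to these via $(\ref{eq_quasi_inverse})$. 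So everything comes down to compatibility of $\equiv$ with $\te$.

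By commutativity and transitivity it suffices to prove $(\star)$: for $p\sim p'$ with $p\leq p'$ and any $q\in X$, $\beta(\g{p}{q})=\beta(\g{p'}{q})$. Two facts drive this. First, an \emph{interval claim}: if $z\leq w<\g{z}{u}$ then $\tau(w)<u$. Indeed $\tau(w)\geq u$ would put $u$ in the stabiliser of $w$ (which contains $[t,\tau(w)]$ by $(\ref{tnelnagyobb})$), so $\g{u}{w}=w$, whence $\g{z}{u}=\g{u}{z}\leq\g{u}{w}=w$, contradicting $w<\g{z}{u}$. Second, the bound $p'<\g{p}{u}$: since $\g{p}{u}\geq\g{p}{t}=p$, the alternative $p'\geq\g{p}{u}$ would place $\g{p}{u}\in[p,p']\subseteq X_{\tau<u}$, contradicting $\tau(\g{p}{u})=u$ (Lemma~\ref{tau_lemma}). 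Now if $q\in X_{\tau\geq u}$ then $\g{u}{q}=q$, so $\g{p'}{q}\leq\g{(\g{p}{u})}{q}=\g{p}{q}$ forces $\g{p}{q}=\g{p'}{q}$; and if $q\in X_{\tau<u}$ then with $z:=\g{p}{q}\in X_{\tau<u}$ we get $\g{p'}{q}\leq\g{z}{u}$ and $\tau(\g{p'}{q})<u=\tau(\g{z}{u})$, so $z\leq\g{p'}{q}<\g{z}{u}$ and the interval claim gives $[z,\g{p'}{q}]\subseteq X_{\tau<u}$, i.e. $\g{p}{q}\sim\g{p'}{q}$. This proves $(\star)$, hence part~(\ref{<_Xx6}).

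Part~(\ref{<_X1}) then follows at once: $\mathbf X_{\mathbf{gr}}$ is a linearly ordered abelian group by Theorem~\ref{csoportLesz}, and its image under the homomorphism $\beta$ is again such a group and a subalgebra of $\beta(\mathbf X)$ (inverses: $\gbeta{\beta(x)}{\beta(\nega x)}=\beta(t)$). For part~(\ref{Akozepe}), the interval claim with $z=t$ gives $[t,u[\,\subseteq X_{\tau<u}$, and its $\komp$-mirror gives $]\nega u,t]\subseteq X_{\tau<u}$, while the endpoints have $\tau(\nega u)=\tau(u)=u$ by claims~(\ref{FENTszimmetrikus}) and (\ref{ZetaOfIdempotent}); hence $[t]_{\tau<u}=\,]\nega u,u[$. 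This interval is $\komp$-closed and closed under $\te$: for $x,y\in\,]\nega u,u[$ the upper bound $\g{x}{y}<\g{u}{u}=u$ is Proposition~\ref{diagonalSZIGORU}, and the lower bound $\g{x}{y}>\nega u$ comes from $(\ref{eq_feltukrozes_CS})$ applied to $\nega x,\nega y$ (the chain being conical), since for any $y_1\in\,]\nega y,u[$ one gets $\nega{(\g{x}{y})}\leq\g{\nega x}{y_1}<u$. Thus $\mathbf{ker}_\beta$ is a subalgebra.

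For part~(\ref{lexikoS}) assume $u$ smallest. Then $X_{\tau<u}=X_{gr}$, because $\tau(x)$ is always a positive idempotent (claim~(\ref{zetaRange_idempotent}) of Proposition~\ref{zeta_1}) and the only one below $u$ is $t$, so $\tau(x)<u$ forces $x\in X_{\tau=t}=X_{gr}$ by Proposition~\ref{elozetes}; consequently $\mathbf{ker}_\beta=\,]\nega u,u[\subseteq X_{gr}$ is a convex subgroup of $\mathbf X_{\mathbf{gr}}$. The last point is to identify the convex components of $X_{gr}$ (taken inside $X$) with the cosets of $\mathbf{ker}_\beta$: for $x<y$ in $X_{gr}$, the interval claim shows $]x,y[\subseteq X_{gr}$ precisely when $y<\g{x}{u}$, i.e. when $\g{\nega x}{y}\in\,]\nega u,u[\,=\mathbf{ker}_\beta$, whereas $y\geq\g{x}{u}$ forces the element $\g{x}{u}$ (with $\tau=u$) into $[x,y]$ and breaks the component; so $x\sim y$ iff $\g{\nega x}{y}\in\mathbf{ker}_\beta$, making $\beta\!\restriction_{X_{gr}}$ the quotient map and $\beta(\mathbf X_{\mathbf{gr}})\cong\mathbf X_{\mathbf{gr}}/\mathbf{ker}_\beta$. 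Lemma~A, applied to $G=\mathbf X_{\mathbf{gr}}$ and $C=\mathbf{ker}_\beta$, then yields $\mathbf X_{\mathbf{gr}}\hookrightarrow_\nu\mathbf X_{\mathbf{gr}}/\mathbf{ker}_\beta\lexADD\overline{\mathbf{ker}_\beta}\cong\beta(\mathbf X_{\mathbf{gr}})\lex\overline{\mathbf{ker}_\beta}$. The main obstacle throughout is the congruence property $(\star)$ of part~(\ref{<_Xx6}) and, inside it, the interval claim: one must rule out that multiplying by $q$ drags a convex component of $X_{\tau<u}$ across an element of $X_{\tau\geq u}$; once this is in hand, parts~(\ref{<_X1})--(\ref{lexikoS}) are bookkeeping on top of $(\star)$, Proposition~\ref{diagonalSZIGORU}, and Lemma~A.
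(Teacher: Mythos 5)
Your treatment of claims (1), (2) and (4) is sound, but there is a genuine gap in your proof of claim (3), namely in the direct verification that $[t]_{\tau<u}=\,]\nega{u},u[$ is closed under $\te$. For the lower bound $\g{x}{y}>\nega{u}$ you invoke (\ref{eq_feltukrozes_CS}) ``for any $y_1\in\,]\nega{y},u[$'' and then use Proposition~\ref{diagonalSZIGORU} to get $\g{\nega{x}}{y_1}<u$. This yields a bound only if $]\nega{y},u[$ is nonempty, and nothing guarantees that: the interval is empty exactly when $u$ covers $\nega{y}$, i.e.\ whenever $]\nega{u},u[$ has a greatest element and $y$ is its least element. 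Since claim (3) does not assume that $u$ is the smallest strictly positive idempotent, this is a real case: take for instance the five-element odd Sugihara chain $\{-2,-1,0,1,2\}$ with $u=2$, where $]\nega{u},u[\,=\{-1,0,1\}$ and, for $y=-1$, $]\nega{y},u[\,=\,]1,2[\,=\emptyset$; your argument then says nothing about $\g{x}{(-1)}$, although closure does hold. (The degenerate case $u=t_\uparrow$, which the paper treats explicitly elsewhere, is of the same kind.) The repair is exactly the paper's route: once claim (1) is established, closure is immediate from the homomorphism property, since (\ref{betas_szorzatos}) and (\ref{betas_implikacios}) give $\gbeta{[t]}{[t]}=[\g{t}{t}]=[t]$ and $\res{\beta}{[t]}{[t]}=[\res{\te}{t}{t}]=[t]$, with no density assumption on the order.

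The rest of the proposal is correct, and in two places it departs from the paper in an instructive way. For claim (1) the paper proves two separate well-definedness statements (both factors in $X_{\tau<u}$; one factor in $X_{\tau\geq u}$), each by a residuation-based contradiction; your single congruence statement $(\star)$, driven by the interval claim ($z\leq w<\g{z}{u}$ implies $\tau(w)<u$) and the bound $p'<\g{p}{u}$, handles both cases at once and is correct. For claim (4)(c) the paper simply says ``Lemma~A ends the proof,'' leaving implicit the identification of $\beta(\mathbf X_{\mathbf{gr}})$ with $\mathbf X_{\mathbf{gr}}/\mathbf{ker}_\beta$; you make this explicit by showing that the convex components of $X_{gr}$ are precisely the cosets of $\mathbf{ker}_\beta$ (i.e.\ $x\sim y$ iff $\g{\nega{x}}{y}\in\,]\nega{u},u[$), bookkeeping which the paper only records later, in the proof of Proposition~\ref{topbotEXISTszorzotabla}. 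So: reroute the closure step of claim (3) through claim (1), and the proof stands.
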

\begin{proof}
\medskip
\item[(\ref{<_Xx6})]
Proving that $\beta$ is a homomorphism amounts to proving that the definitions in (\ref{betas_rendezese})-(\ref{betas_nega}) are independent of the chosen representatives $p$ and $q$.
The definition of $\leq_\beta$ is independent of the choice of $p$ and $q$, since the components are convex. The ordering $\leq_{\beta}$ is linear since so is $\leq$. Therefore, (\ref{betas_rendezese}) is well-defined.
To prove that (\ref{betas_szorzatos}) is well-defined, we prove the following two statements:
First we prove that for $p_1,p_2,q\in X_{\tau<u}$, if $p_2\in[p_1]$ then $\g{p_2}{q}\in[\g{p_1}{q}]$.
Indeed, since $X$ is a chain, we may assume $p_1<p_2$.
If $\g{p_1}{q}=\g{p_2}{q}$ then we are done, therefore, by isotonicity of $\te$ we may assume $\g{p_1}{q}<\g{p_2}{q}$.
Contrary to the claim, suppose that there exists $a\in]\g{p_1}{q},\g{p_2}{q}[$ such that $\tau(a)\geq u$. 
By residuation, $\g{p_1}{q}<a$ implies $p_1\leq\res{\te}{q}{a}$, and since $X$ is a chain, $a<\g{p_2}{q}$ is equivalent to $\res{\te}{q}{a}<p_2$. Thus $\res{\te}{q}{a}\in[p_1,p_2[$ holds, hence $\tau(\res{\te}{q}{a})<u$ follows since $p_2\in[p_1]$ and $[p_1]$ is convex.
However, Lemma~\ref{tau_lemma} ensure $\tau(\res{\te}{q}{a})\geq\tau(a)\geq u$, a contradiction.
Second, we prove that for $p\in X_{\tau\geq u}$ and $q_1,q_2\in X_{\tau<u}$, if $q_2\in[q_1]$ then $\g{p}{q_1}=\g{p}{q_2}$.
Again, we may assume $q_1<q_2$ since $X$ is a chain.
Contrary to the claim suppose $\g{p}{q_1}<\g{p}{q_2}$.
By residuation it is equivalent to $\res{\te}{p}{(\g{p}{q_1})}<q_2$ since $X$ is a chain.
By residuation $q_1\leq\res{\te}{p}{(\g{p}{q_1})}$ holds, too.
Thus $\res{\te}{p}{(\g{p}{q_1})}\in[q_1,q_2[$ holds, hence $q_2\in[q_1]$ and the convexity of $[q_1]$ implies $\tau(\res{\te}{p}{(\g{p}{q_1})})\leq u$.
However, Lemma~\ref{tau_lemma} ensures $\tau(\res{\te}{p}{(\g{p}{q_1})})\geq\tau(p)>u$, a contradiction.
Therefore, (\ref{betas_szorzatos}) is well-defined. 
Next, claim~(\ref{FENTszimmetrikus}) in Proposition~\ref{zeta_1} implies that the image of a convex component under $\kompM{\beta}$ is a convex component, hence the definition in (\ref{betas_nega}) is independent of the chosen representative.
Finally, using (\ref{eq_quasi_inverse}) and that (\ref{betas_szorzatos}) and (\ref{betas_nega}) are well-defined, 
for $p_1,p_2,q_1,q_2\in X$, if $\beta(p_2)=\beta(p_1)$ and $\beta(q_2)=\beta(q_1)$ then it holds true that 
\begin{eqnarray}\label{IgyKellEzt}
\beta(\res{\te}{p_2}{q_2})&=&
\beta(\nega{(\g{p_2}{\nega{q_2}})})=
\negaM{\beta}{(\gbeta{\beta(p_2)}{\negaM{\beta}{\beta(q_2)}})}=
\\
&=&
\negaM{\beta}{(\gbeta{\beta(p_1)}{\negaM{\beta}{\beta(q_1)}})}=
\nonumber
\beta(\nega{(\g{p_1}{\nega{q_1}})})=
\beta(\res{\te}{p_1}{q_1}).
\end{eqnarray}
Therefore, (\ref{betas_implikacios}) is well-defined, too.
Summing up, $\beta$ is a homomorphism from $\mathbf X$ to $\beta(\mathbf X)$.
Therefore, 
it readily follows that $\beta(\mathbf X)$ which was defined as the image of $\mathbf X$ under $\beta$ is an odd involutive FL$_e$-chain with involution $\kompM{\beta}$.

\medskip
\item[(\ref{<_X1})]
In the light of the previous claim, to verify that $\beta(\mathbf X_{\mathbf{gr}})$ is a subalgebra of $\beta(\mathbf X)$ it suffices to note that $X_{gr}$ is a nonempty subuniverse of $X$ by Proposition~\ref{elozetes}.
By Theorem~\ref{csoportLesz}, showing cancellativity of $\beta(\mathbf X_{\mathbf{gr}})$ is equivalent to showing that each of its elements has inverse; but it is straightforward since each element of $\mathbf X_{\mathbf{gr}}$ has inverse and a homomorphism preserves this property.

\medskip
\item[(\ref{Akozepe})]
Clearly, $[t]$ is nonempty since $t\in[t]$.
By (\ref{betas_szorzatos}), $\gbeta{[t]}{[t]}=[\g{t}{t}]=[t]$ holds, hence $[t]_{\tau<u}$ is closed under $\te$.
By (\ref{betas_implikacios}), $\res{\beta}{[t]}{[t]}=[\res{\te}{t}{t}]=[t]$ holds, hence $[t]_{\tau<u}$ is closed under $\ite{\te}$, too.
For any $z\in]\nega{u},u[$, $\tau(z)=\tau(\max(z,\nega{z}))\leq \max(z,\nega{z})<u$ holds by claims (\ref{FENTszimmetrikus}) and (\ref{item_boundary_zeta}) in Proposition~\ref{zeta_1}.
On the other hand, since $u$ is idempotent, by claims (\ref{FENTszimmetrikus}) and (\ref{ZetaOfIdempotent}) in Proposition~\ref{zeta_1} we obtain $\tau(\nega{u})=\tau(u)=u$, showing $\nega{u},u\not\in[t]$, and in turn, $[t]=]\nega{u},u[$.

\medskip
\item[(\ref{lexikoS})]
For $x\in X_{\tau<u}$, $\tau(x)>t$ would yield $\tau(x)$ be an idempotent by  claim~(\ref{ZetaOfIdempotent}) in Proposition~\ref{zeta_1}, which is strictly between $t$ and $u$, contrary to assumption.
Hence $X_{\tau<u}=X_{\tau=t}$ holds.
Claim~(\ref{nagyonSzigoru}) in Proposition~\ref{elozetes} concludes the proof of $X_{\tau<u}=X_{gr}$.
Therefore, $\mathbf X_{\tau<u}$ is the group part of $\mathbf X$.
By claim~(\ref{Akozepe}), $[t]_{\tau<u}$ is a subuniverse of $\mathbf X$, hence it follows from $\mathbf X_{\tau<u}=\mathbf X_\mathbf{gr}$ that $[t]_{[\tau<u]}$ is a universe of a linearly ordered group, and thus $\mathbf{ker}_\beta$ is a
convex subgroup of $\mathbf X_\mathbf{gr}$.
Lemma~A ends the proof.
\end{proof}


\section{Extremals}\label{SectExtremals}
In this section we define and investigate the extremal elements of the convex components of the group part of $\mathbf X$.
\begin{definition}\rm
Let $( X, \leq, \te, \ite{\te}, t, f )$ be an odd involutive FL$_e$-chain and $u>t$ be idempotent.
Provisionally, for $v\in X_{\tau<u}$ let 
$$
\top_{[v]}=\left\{
\begin{array}{ll}
\bigvee_{z\in[v]}z & \mbox{in case $u\neq t_\uparrow$}\\
v_\uparrow  & \mbox{in case $u=t_\uparrow$}\\
\end{array}
\right. 
\hskip0.1cm
\mbox{and}
\hskip0.2cm
\bot_{[v]}=\left\{
\begin{array}{ll}
\bigwedge_{z\in[v]}z & \mbox{in case $u\neq t_\uparrow$}\\
v_\downarrow  & \mbox{in case $u=t_\uparrow$}\\
\end{array}
\right. 
.
$$
Call $\top_{[v]}$ and $\bot_{[v]}$ the top- and the bottom-extremals of the (convex) component of $v$.
Denote
$$
\begin{array}{llll}
X_{\tau\geq u}^{T_c}&=&\{\top_{[v]}\ | \ v\in X_{\tau<u}\}  & \hskip2mm \mbox{(top-extremals of components)},\\
X_{\tau\geq u}^{B_c}&=&\{\bot_{[v]}\ | \ v\in X_{\tau<u}\}  & \hskip2mm \mbox{(bottom-extremals of components)},\\
X_{\tau\geq u}^{E_c}&=&X_{\tau\geq u}^{T_c}\cup X_{\tau\geq u}^{B_c}  & \hskip2mm \mbox{(extremals of components)}.
\end{array}
$$
\end{definition}
The $\tau\geq u$ subscript in the notations refers to the fact that $X_{\tau\geq u}^{T_c}$, $X_{\tau\geq u}^{B_c}$, and $X_{\tau\geq u}^{E_c}$ are subsets of $X_{\tau\geq u}$. We shall shortly prove it in

\begin{proposition}\label{topbotEXISTszorzotabla}
Let ${\mathbf X}=( X, \leq, \te, \ite{\te}, t, f )$ be an odd involutive FL$_e$-chain such that there exists $u$, the smallest strictly positive idempotent element, and let $v\in X_{\tau<u}$.

\begin{enumerate}
\item\label{TopProps}
If $u\neq t_\uparrow$ then $[v]$ is infinite, and if $u=t_\uparrow$ then $[v]=\{v\}$.
If $u\neq t_\uparrow$ then 
$\top_{[v]}$ and $\bot_{[v]}$ exist.
\item[]
It holds true that
\begin{equation}\label{slhhsdjksjkgb}
\top_{[v]}=\g{v}{u},
\end{equation}
\begin{equation}\label{JHGFJhgkjkHKJH}
\bot_{[v]}=\g{v}{\nega{u}},
\end{equation}
\begin{equation}\label{phvsfkfb}
\bot_{[v]}=\nega{(\top_{[\nega{v}]})} .
\end{equation}

If $u=t_\uparrow$ then and $v_\downarrow<v<v_\uparrow$.
\item\label{EXTRnincsKOMPONENSBEN}
$\bot_{[v]},\top_{[v]}\in X_{\tau\geq u}$,
$\bot_{[v]},\top_{[v]}\not\in[v]$.

\item\label{ExtrSzorzotabla}
The following product table holds true for $\te$:
\begin{table}[ht]
\small
\begin{center}
\caption{{(Extremals):} If $v,w\in X_{\tau<u}$, $a_\downarrow=a\in X_{\tau\geq u}\setminus X_{\tau\geq u}^{T_c}$, and $y\in X_{\tau\geq u}$ then the following holds true.}
\begin{tabular}{cccccc}
$\te$ & \vline & $\bot_{[w]}$  & $w$ & $\top_{[w]}$ & $y$ \\
\hline
$\bot_{[v]}$  & \vline &  & \color{midgrey}$\bot_{[\g{v}{w}]}$ & $\bot_{[\g{v}{w}]}$ & \\
$v$  & \vline & $\bot_{[\g{v}{w}]}$ & {\color{midgrey}$\g{v}{w}$} & $\top_{[\g{v}{w}]}$ & \\
$\top_{[v]}$ & \vline & $\color{midgrey}\bot_{[\g{v}{w}]}$ & $\color{midgrey}\top_{[\g{v}{w}]}$ & $\top_{[\g{v}{w}]}$  & $\g{v}{y}$\\
$a$ & \vline & $\g{a}{w}$ & {\color{midgrey}$\g{a}{w}$} &  \color{midgrey}$\g{a}{w}$ &\\
\label{Prods1stCIKK}
\end{tabular}
\end{center}
\end{table}
\item\label{HolTopAzBottom}
If $\nega{u}$ is idempotent then 
$X_{\tau\geq u}^{T_c}\cap X_{\tau\geq u}^{B_c}=\emptyset$.

\end{enumerate}
\end{proposition}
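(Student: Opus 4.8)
The plan is to base everything on the two representation formulas $\top_{[v]}=\g{v}{u}$ and $\bot_{[v]}=\g{v}{\nega{u}}$ and then read the remaining claims off the arithmetic of $u$ with the $\tau$-lemma (Lemma~\ref{tau_lemma}). By claim~(\ref{lexikoS}) of Lemma~\ref{faktorok}, $X_{\tau<u}=X_{gr}$ and $[t]=\mathbf{ker}_\beta=\,]\nega{u},u[$ is a convex subgroup. For $v\in X_{gr}$ one has $\tau(v)=t$, so the $\tau$-lemma gives $\tau(\g{v}{z})=\tau(z)$; hence left multiplication by $v$ — an order-automorphism of $X$ by claim~(\ref{nagyonSzigoru}) of Proposition~\ref{elozetes} — preserves the condition $\tau<u$ and convexity and carries $[t]$ onto $[v]$, i.e. $[v]=\g{v}{[t]}$. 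This gives the dichotomy of claim~(\ref{TopProps}): when $u\neq t_\uparrow$ the subgroup $]\nega{u},u[$ is nontrivial, hence infinite and without greatest element (if $g>t$ lies in it, so does $\g{g}{g}>g$ by Proposition~\ref{diagonalSZIGORU}), making every $[v]$ infinite, whereas $u=t_\uparrow$ forces $]\nega{u},u[=\{t\}$ and $[v]=\{v\}$. To get (\ref{slhhsdjksjkgb}) I would show $\sup[t]=u$: any upper bound $s<u$ of $]\nega{u},u[$ would satisfy $t\leq s<u$, hence lie in $]\nega{u},u[=[t]$ as a greatest element of the subgroup, which is impossible; transporting by the order-automorphism $\g{v}{\cdot}$ gives $\sup[v]=\g{v}{u}$, dually $\inf[v]=\g{v}{\nega{u}}$, which is (\ref{JHGFJhgkjkHKJH}), and (\ref{phvsfkfb}) follows since $\komp$ sends $[v]$ to $[\nega{v}]$ (claim~(\ref{FENTszimmetrikus}) of Proposition~\ref{zeta_1}), so $\nega{(\bot_{[v]})}=\top_{[\nega{v}]}$. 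In the case $u=t_\uparrow$ the two formulas persist because $t$ being covered by $u$ (and $\nega{u}$ by $t$) transports to $v$ being covered by $\g{v}{u}$ (and $\g{v}{\nega{u}}$ by $v$), which also yields $v_\downarrow<v<v_\uparrow$.

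Claim~(\ref{EXTRnincsKOMPONENSBEN}) is then immediate from the $\tau$-lemma and the idempotency $\tau(u)=u$ (claim~(\ref{ZetaOfIdempotent}) of Proposition~\ref{zeta_1}): $\tau(\top_{[v]})=\tau(\g{v}{u})=\max(\tau(v),u)=u$, and similarly $\tau(\bot_{[v]})=u$ using $\tau(\nega{u})=\tau(u)=u$; thus both extremals lie in $X_{\tau\geq u}$, which is disjoint from $X_{\tau<u}\supseteq[v]$. For the product table of claim~(\ref{ExtrSzorzotabla}) I would substitute the two formulas and use $\g{u}{u}=u$, $\g{u}{\nega{u}}=\nega{(\res{\te}{u}{u})}=\nega{\tau(u)}=\nega{u}$ (from (\ref{eq_quasi_inverse}) and (\ref{tnelnagyobb})), and $\g{u}{y}=y$ for $y\in X_{\tau\geq u}$ (since $y=\g{t}{y}\leq\g{u}{y}\leq\g{\tau(y)}{y}=y$ as $t\leq u\leq\tau(y)$). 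For instance $\g{\top_{[v]}}{\top_{[w]}}=\g{\g{v}{w}}{\g{u}{u}}=\top_{[\g{v}{w}]}$, $\g{v}{\bot_{[w]}}=\g{\g{v}{w}}{\nega{u}}=\bot_{[\g{v}{w}]}$, and $\g{\top_{[v]}}{y}=\g{v}{\g{u}{y}}=\g{v}{y}$; the grey entries are transposes of white ones under commutativity (e.g.\ $\g{a}{\top_{[w]}}=\g{\top_{[w]}}{a}=\g{w}{a}$).

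The one substantial step — the main obstacle — is the white entry $\g{a}{\bot_{[w]}}=\g{a}{w}$ for $a\in X_{\tau\geq u}\setminus X_{\tau\geq u}^{T_c}$ with $a_\downarrow=a$. Since $\g{a}{\bot_{[w]}}=\g{(\g{a}{\nega{u}})}{w}$, it reduces to the key lemma $\g{a}{\nega{u}}=a$, which I would prove by contradiction. Suppose $c:=\g{a}{\nega{u}}<a$; then $\g{c}{u}=c$, so $c\in X_{\tau\geq u}$. For $d:=\res{\te}{a}{c}$ one has $\nega{u}\leq d<t$ and $\g{a}{d}=c$, and if $d>\nega{u}$ then $d\in\,]\nega{u},t[\subseteq X_{gr}$, so multiplying $\g{a}{d}=c$ by the group inverse $\nega{d}$ (where $\nega{d}<u$) gives $a=\g{c}{\nega{d}}\leq\g{c}{u}=c$, absurd; hence $d=\nega{u}$, which by (\ref{eq_quasi_inverse}) rearranges to $\g{a}{\nega{c}}=u$ and forces $\tau(a)=u$ through the $\tau$-lemma. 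Looking at $p:=\res{\te}{u}{a}$ then gives $\g{u}{p}=a$ with $p\geq a$ and $\tau(p)\in\{t,u\}$ (as $u$ is the smallest strictly positive idempotent); if $\tau(p)=t$ then $p\in X_{gr}$ and $a=\g{p}{u}=\top_{[p]}\in X_{\tau\geq u}^{T_c}$, contradicting the choice of $a$. The surviving possibility is the degenerate one $p=a$, i.e.\ $\res{\te}{u}{a}=a$ together with $\g{a}{\nega{u}}<a$ — a jump of $\g{a}{\cdot}$ at the lower endpoint of $]\nega{u},u[$ — and excluding it is exactly where the discreteness hypothesis $a_\downarrow=a$ is needed: the absence of an immediate predecessor of $a$ has to be turned into a group element below $a$ whose component has supremum $a$, once more placing $a$ in $X_{\tau\geq u}^{T_c}$. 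Closing this last case cleanly is the delicate point I would concentrate on.

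Finally, for claim~(\ref{HolTopAzBottom}) assume $\nega{u}$ is idempotent and that $\top_{[v_1]}=\bot_{[v_2]}$ for some $v_1,v_2\in X_{gr}$. Multiplying by $\nega{u}$ and using $\g{u}{\nega{u}}=\nega{u}$ on the left together with the idempotency $\g{\nega{u}}{\nega{u}}=\nega{u}$ on the right, I get $\bot_{[v_1]}=\g{v_1}{\nega{u}}=\g{v_2}{\nega{u}}=\bot_{[v_2]}=\top_{[v_1]}$, so $\g{v_1}{u}=\g{v_1}{\nega{u}}$; cancelling the group element $v_1$ yields $u=\nega{u}$, contradicting $\nega{u}<t<u$. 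Hence $X_{\tau\geq u}^{T_c}\cap X_{\tau\geq u}^{B_c}=\emptyset$.
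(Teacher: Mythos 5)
Your handling of claims~(\ref{TopProps}) and (\ref{EXTRnincsKOMPONENSBEN}), of all entries of Table~\ref{Prods1stCIKK} except $\g{a}{\bot_{[w]}}$, and of claim~(\ref{HolTopAzBottom}) is correct and close to the paper's own route (your cancellation argument for (\ref{HolTopAzBottom}) is in fact more direct than the paper's, which first proves a disjoint-or-coincide dichotomy). The genuine gap is exactly the point you flag yourself: Table~\ref{Prods1stCIKK}$_{(4,1)}$, equivalently the key lemma $\g{a}{\nega{u}}=a$ for $a=a_\downarrow\in X_{\tau\geq u}\setminus X_{\tau\geq u}^{T_c}$. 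Your reduction to the configuration $\tau(a)=u$, $\res{\te}{u}{a}=a$, $c:=\g{a}{\nega{u}}<a$, $a\notin X_{\tau\geq u}^{T_c}$ is correct, but no further equational reasoning of the kind you are using can refute that configuration: every identity you derived (and any further one of that sort) is also satisfied by the genuinely occurring pseudo top-extremals $x\in X_{\tau\geq u}^{T_{ps}}$, for which likewise $\tau(x)=u$, $\res{\te}{u}{x}=x$, $\g{x}{\nega{u}}=x_\downarrow<x$ and $x\notin X_{\tau\geq u}^{T_c}$ (see Proposition~\ref{khkfgjkdfjkdjkddjss}). The only thing separating your impossible configuration from that possible one is the hypothesis $a_\downarrow=a$, which is an order-approximation statement about the ambient chain, not an identity, so it must be exploited analytically rather than algebraically. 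Your proposed exit --- producing a group element below $a$ whose component has supremum $a$ --- amounts to showing $]c,a[\,\subseteq X_{\tau<u}$, and there is no reason for $]c,a[$ to contain even one element of $X_{\tau<u}$; a priori it may lie entirely inside $X_{\tau\geq u}$, so that route stalls for the same reason.

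The missing idea (the paper's) is to read $a_\downarrow=a$ as saying that $a$ is a join of strictly smaller elements and to combine this with the fact that $\te$ distributes over arbitrary existing joins, a consequence of residuation. Dually, $\nega{a}_\uparrow=\nega{a}$, so $\nega{a}=\bigwedge Z_2$ for some $Z_2\subseteq X\setminus\{\nega{a}\}$; and since $a\notin X_{\tau\geq u}^{T_c}$, by (\ref{phvsfkfb}) $\nega{a}$ is not the infimum of any component of $X_{\tau<u}$, so one may take $Z_2\subseteq X_{\tau\geq u}\setminus\{\nega{a}\}$ (replace any component element by that component's bottom-extremal). For each $z\in Z_2$, the conical inequality (\ref{eq_feltukrozes_CS}) (applicable since $a>\nega{z}$), then Table~\ref{Prods1stCIKK}$_{(3,4)}$ (applicable since $z\in X_{\tau\geq u}$), then (\ref{eq_feltukrozes}) give $\g{a}{\bot_{[w]}}\geq\nega{(\g{z}{\top_{[\nega{w}]}})}=\nega{(\g{z}{\nega{w}})}\geq\g{\nega{z}}{w}$. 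Taking joins and using distributivity, $\g{a}{\bot_{[w]}}\geq\bigvee_{z\in Z_2}\left(\g{\nega{z}}{w}\right)=\g{\left(\bigvee_{z\in Z_2}\nega{z}\right)}{w}=\g{a}{w}$, while isotonicity gives the converse inequality. This limit-plus-distributivity step is precisely what your otherwise sound reduction lacks, and without it the key lemma $\g{a}{\nega{u}}=a$ cannot be reached.
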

\begin{proof}
\item[(\ref{TopProps}):] 
By claim~(\ref{Akozepe}) in Lemma~\ref{faktorok}, $[t]=\, ]\nega{u},u[$ is a universe of a subgroup of $\mathbf X$. It is trivial if $u=t_\uparrow$, hence then $[t]=\{t\}$.
Non-trivial linearly ordered abelian groups are known to 
be infinite and 
having no greatest or least element.
$[t]$ 
is non-trivial if $u\neq t_\uparrow$, so $]\nega{u},u[$ 
has no greatest or least element;
this verifies $|[t]|=\infty$ and the existence of $\top_{[t]}$ and $\bot_{[t]}$ along with $\top_{[t]}=u$, $\bot_{[t]}=\nega{u}$. 
\\
Note that $[v]$ is equal to the coset $\g{v}{[t]}$ since $\mathbf X_{\tau<u}$ is a group.
Indeed, by (\ref{betas_szorzatos}) $\g{v}{[t]}\subseteq\gbeta{[v]}{[t]} \subseteq[\g{v}{t}]=[v]$ holds, whereas 
for any $w\in[v]$, by (\ref{betas_szorzatos}) 
$\g{v^{-1}}{w}\in[\g{v^{-1}}{w}]=\gbeta{[v^{-1}]}{[w]}=\gbeta{[v^{-1}]}{[v]}=[\g{v^{-1}}{v}]=[t]$
holds, and hence $w=\g{v}{(\g{v^{-1}}{w})}\in\g{v}{[t]}$.
\\
Since for $z\in[t]$, the mapping $z\mapsto\g{v}{z}$ is a bijection between the two cosets $[t]$ and $[v]$,
it follows that $[v]$ has the same cardinality as $[t]$. 
\\
Assume $u\neq t_\uparrow$. 
Using the infinite distributivity of $\vee$ over $\te$, 
$
\g{v}{u}=
\g{v}{\bigvee_{w\in[t]}w}=
\g{v}{\bigvee_{w\in[t]}\g{\g{v^{-1}}{v}}{w}}=
\g{ \g{v}{v^{-1}} }{ \bigvee_{w\in[t]}{\g{v}{w}} }=
\bigvee_{w\in[t]}{\g{v}{w}}
$ follows.
Since the mapping above is a bijection, it is equal to
$
\bigvee_{\g{v}{w}\in[v]}{\g{v}{w}}=
\bigvee_{z\in[v]}{z}
$,
thus confirming the existence of $\top_{[v]}$ along with (\ref{slhhsdjksjkgb}) in the case of $u\neq t_\uparrow$.
\\
Next we prove, the existence of $\bot_{[v]}$ 
if $u\neq t_\uparrow$.
Because of (\ref{betas_nega}), $\nega{(\top_{[\nega{v}]})}=\nega{(\bigvee_{z\in[\nega{v}]}z)}$
is equal to $\nega{(\bigvee_{\nega{z}\in[v]}z)}$.
Therefore $\bigvee_{\nega{z}\in[v]}z$ exists and since $\komp$ is an order reversing involution of $X$, it follows that
$\nega{(\bigvee_{\nega{z}\in[v]}z)}$ is equal to $\bigwedge_{\nega{z}\in[v]}\nega{z}=\bot_{[v]}$.
\\
The previous argument also shows (\ref{phvsfkfb}) if $u\neq t_\uparrow$.
(\ref{phvsfkfb}) readily follows from (\ref{FelNeg_NegLe}) if $u=t_\uparrow$. 
\\
Next we prove (\ref{slhhsdjksjkgb}) and (\ref{JHGFJhgkjkHKJH}) when $u=t_\uparrow$.
By monotonicity of $\te$, $\g{v}{u}\geq\g{v}{t}=v$ holds, but there cannot hold equality, since $\tau(v)<u$, but $\tau(\g{v}{u})\geq\tau(u)=u$ by Lemma~\ref{tau_lemma}.
Hence, $v<\g{v}{u}$ follows. An analogous argument shows $\g{v}{\nega{u}}<v$.
Now, if there would exists $a,b$ such that $\g{v}{\nega{u}}<a<v<b<\g{v}{u}$ then by multiplying it by $\nega{v}$ (the inverse of $v$) 
it would yield $\nega{u}<\g{\nega{v}}{a}<t<\g{\nega{v}}{b}<u$, contradicting $u=t_\uparrow$. Summing up, it holds true that
$\bot_{[v]}=v_\downarrow=\g{v}{\nega{u}}<v$ and $\top_{[v]}=v_\uparrow=\g{v}{u}>v$.
\\
It remains to prove (\ref{JHGFJhgkjkHKJH}) in the $u=t_\uparrow$ case.
By (\ref{phvsfkfb}), (\ref{slhhsdjksjkgb}), and (\ref{eq_feltukrozes})
$
\bot_{[v]}=
\nega{(\top_{[\nega{v}]})}=
\nega{(\g{\nega{v}}{u})}\geq
\g{v}{\nega{u}}
$.
By (\ref{betas_szorzatos}), $\g{v}{\nega{u}}=\g{w}{\nega{u}}$
for any $w>v$, $w\in[v]$ (such a $w$ exists since $[v]$ has no greatest element).
Finally by $(\ref{eq_feltukrozes_CS})$, $\g{w}{\nega{u}}\geq\nega{(\g{\nega{v}}{u})}=\nega{(\top_{[\nega{v}]})}$.
It follows that there is equality everywhere, hence (\ref{JHGFJhgkjkHKJH}) follows.

\item[(\ref{EXTRnincsKOMPONENSBEN}):] 

By (\ref{slhhsdjksjkgb}), Lemma~\ref{tau_lemma}, and claim~(\ref{ZetaOfIdempotent}) in Proposition~\ref{zeta_1}, 
$\tau(\top_{[v]})=\tau(\g{v}{u})=\tau(u)=u$, and (\ref{phvsfkfb}) yields $\tau(\bot_{[v]})=u$, thus confirming $\bot_{[v]},\top_{[v]}\in X_{\tau\geq u}$, and in turn, $\bot_{[v]},\top_{[v]}\not\in[v]$.

\item[(\ref{ExtrSzorzotabla}):] 
Grey items in Table~\ref{Prods1stCIKK} are either straightforward or follow from the black items thereof  by commutativity.
Therefore, we prove only the black items.
\begin{itemize}
\item
Using (\ref{slhhsdjksjkgb}), 
$\g{v}{\top_{[w]}}=\g{v}{\g{w}{u}}=\top_{[\g{v}{w}]}$ confirms Table~\ref{Prods1stCIKK}$_{(2,3)}$.
\item
A similar proof using the idempotency of $u$ ensures Table~\ref{Prods1stCIKK}$_{(3,3)}$.
\item
By (\ref{slhhsdjksjkgb}), $\g{\top_{[v]}}{y}=\g{(\g{v}{u})}{y}=\g{v}{(\g{u}{y})}=\g{v}{y}$, where the latest equality is implied by $y\in X_{\tau\geq u}$; this proves Table~\ref{Prods1stCIKK}$_{(3,4)}$.
\item
By (\ref{JHGFJhgkjkHKJH}), $\g{v}{\bot_{[w]}}=\g{v}{(\g{w}{\nega{u}})}=\g{(\g{v}{w})}{\nega{u}}=\bot_{[\g{v}{w}]}$
holds as stated in Table~\ref{Prods1stCIKK}$_{(2,1)}$.

\item
Next,
by (\ref{slhhsdjksjkgb}) and (\ref{JHGFJhgkjkHKJH})
$
\g{\bot_{[v]}}{\top_{[w]}}=\g{(\g{v}{u})}{(\g{w}{\nega{u}})}=\g{(\g{v}{w})}{(\g{u}{\nega{u}})}=\g{(\g{v}{w})}{\nega{u}}=\bot_{[\g{v}{w}]}
$,
where $\g{u}{\nega{u}}=\nega{u}$ follows from 
$\g{u}{\nega{u}}=\nega{(\res{\te}{u}{u})}=\nega{\tau(u)}=\nega{u}$, so we are done with the proof of Table~\ref{Prods1stCIKK}$_{(1,3)}$.

\item
There exists $Z_2\subset X\setminus \{\nega{a}\}$ such that $\nega{a}=\bigwedge Z_2$, since 
$\nega{a}_\uparrow=\nega{a}$.
We may safely assume $Z_2\subset X_{\tau\geq u}\setminus \{\nega{a}\}$ since $\nega{a}$ is not the greatest lower bound of any component of $X_{\tau<u}$.
Therefore, by (\ref{eq_feltukrozes_CS}), Table~\ref{Prods1stCIKK}$_{(3,4)}$, and (\ref{eq_feltukrozes}), respectively, for $z\in Z_2$ it holds true that 
$\g{a}{\bot_{[w]}}\geq\nega{(\g{z}{\top_{[\nega{w}]}})}=\nega{(\g{z}{\nega{w}})}\geq\g{\nega{z}}{w}$.
Hence, using that $\te$ is residuated, $\g{a}{w}=\g{(\bigvee_{z\in Z_2}\nega{z})}{w}=\bigvee_{z\in Z_2}(\g{\nega{z}}{w})\leq\g{a}{\bot_{[w]}}$ follows.
On the other hand, by the isotonicity of $\te$, $\g{a}{\bot_{[w]}}\leq\g{a}{w}$ holds, too.
This proves Table~\ref{Prods1stCIKK}$_{(4,1)}$.

\item[(\ref{HolTopAzBottom}):]
First we claim that
if $\nega{u}$ is idempotent then 
$X_{\tau\geq u}^{T_c}$
and
$X_{\tau\geq u}^{B_c}$ are either disjoint or coincide. 
Indeed, 
if there exists $c,d\in X_{\tau<u}$ such that $\top_{[c]}=\bot_{[d]}$ then 
$\g{c}{\nega{c}}=t$ follows from claim~(\ref{nagyonSzigoru}) in Proposition~\ref{elozetes}, and 
by Table~\ref{Prods1stCIKK}$_{(2,3)}$ and Table~\ref{Prods1stCIKK}$_{(2,1)}$, 
for any $v\in X_{\tau<u}$, 
$\top_{[v]}=\top_{[\g{c}{\g{\nega{c}}{v}}]}=\g{\top_{[c]}}{\g{\nega{c}}{v}}=\g{\bot_{[d]}}{\g{\nega{c}}{v}}=\bot_{[\g{d}{\g{\nega{c}}{v}}]}$ 
and
$\bot_{[v]}=\bot_{[\g{d}{\g{{\nega{d}}}{v}}]}=\g{\g{\bot_{[d]}}{\nega{d}}}{v}=\g{\g{\top_{[c]}}{\nega{d}}}{v}=\top_{[\g{\g{c}{\nega{d}}}{v}]}$ hold, thus the proof of the claim is concluded.
\\
Therefore, it suffices to prove that $\bot_{[t]}\notin X_{\tau\geq u}^{T_c}$.
If there would exist $v\in X_{\tau<u}$ such that $\bot_{[t]}=\top_{[v]}$ then by Table~\ref{Prods1stCIKK}$_{(1,3)}$, $\g{\top_{[v]}}{\bot_{[t]}}=\bot_{[v]}$ would follow, a contradiction to the idempotency of $\bot_{[t]}$.
\end{itemize}
\end{proof}

\section{Gaps -- Motto: \lq\lq Not all gaps created equal\rq\rq}\label{SectGaps}


In order to treat the case when the order density of $\mathbf X$ is not assumed (c.f. Table~\ref{Prods1stCIKK}$_{(4,1)}$, where a kind of density condition must have been assumed for $a$), first we classify gaps in $X_{\tau\geq u}$.
It turns out that some gaps behave like extremal elements of \lq\lq missing\rq\rq\ components of $\mathbf X$, those will be called pseudo extremals. 

\begin{definition}\label{sokmindenki}
\rm
Let ${\mathbf X}=( X, \leq, \te, \ite{\te}, t, f )$ be an odd involutive FL$_e$-chain with residual complement $\komp$, such that there exists $u$, the smallest strictly positive idempotent. 
Let
$$
\begin{array}{llll}
X_{\tau\geq u}^{Gap}&=&\{ x\in X_{\tau\geq u} \ | \ x_\downarrow<x \} & \hskip2mm \mbox{(upper elements of gaps in $X_{\tau\geq u}$)}\\
X_{\tau\geq u}^{G_2}&=&\{ x\in X_{\tau\geq u}^{Gap} \ | \ \g{x}{\nega{u}}=x \} & \hskip2mm \mbox{}\\
X_{\tau\geq u}^{T_{ps}}&=&\{ x\in X_{\tau\geq u}^{Gap} \ | \ \g{x}{\nega{u}}=x_\downarrow \} & \hskip2mm \mbox{(pseudo top-extremals)}\\
X_{\tau\geq u}^{B_{ps}}&=&\{ x_\downarrow\in X \ | \ x\in X_{\tau\geq u}^{T_{ps}} \} & \hskip2mm \mbox{(pseudo bottom-extremals)}\\
X_{\tau\geq u}^{E_{ps}}&=& X_{\tau\geq u}^{T_{ps}}\cup X_{\tau\geq u}^{B_{ps}} & \hskip2mm \mbox{(pseudo extremals)}\\
X_{\tau\geq u}^T&=&X_{\tau\geq u}^{T_c}\cup X_{\tau\geq u}^{T_{ps}} & \hskip2mm \mbox{(top-extremals)}\\
X_{\tau\geq u}^B&=&X_{\tau\geq u}^{B_c}\cup X_{\tau\geq u}^{B_{ps}} & \hskip2mm \mbox{(bottom-extremals)}\\
X_{\tau\geq u}^E&=&X_{\tau\geq u}^T\cup X_{\tau\geq u}^B & \hskip2mm \mbox{(extremals)}\\
\end{array}
$$
For $x\in X_{\tau\geq u}$ denote $x_\Downarrow$ the predecessor of $x$ inside $X_{\tau\geq u}$, that is let
$$
x_\Downarrow=\left\{
\begin{array}{ll}
z & \mbox{if there exists $X_{\tau\geq u}\ni z<x$ such that there is}\\ & \mbox{no element in $X_{\tau\geq u}$ strictly between $z$ and $x$,}\\
x & \mbox{if for any $X_{\tau\geq u}\ni z<x$ there exists $v\in X_{\tau\geq u}$}\\
& \mbox{such that $z<v<x$ holds.}\\
\end{array}
\right. 
$$
Clearly,
$$
x_\Downarrow=\left\{
\begin{array}{ll}
x_\downarrow & \mbox{if $x\in X_{\tau\geq u}\setminus X_{\tau\geq u}^{T_c}$}\\
\bot_{[v]} & \mbox{if $x=\top_{[v]}\in X_{\tau\geq u}^{T_c}$}\\
\end{array}
\right. .
$$
Define $x_\Uparrow$ dually.
\end{definition}

\begin{proposition}\label{khkfgjkdfjkdjkddjss}
Let ${\mathbf X}=( X, \leq, \te, \ite{\te}, t, f )$ be an odd involutive FL$_e$-chain with residual complement $\komp$, such that there exists $u$, the smallest strictly positive idempotent. 
\begin{enumerate}
\item\label{NoTopGap1lehetseges}
$X_{\tau\geq u}^{B_{ps}}\subseteq X_{\tau\geq u}$,
$X_{\tau\geq u}^{T_{ps}}\cap X_{\tau\geq u}^{T_c}=\emptyset$,
$X_{\tau\geq u}^{B_{ps}}\cap X_{\tau\geq u}^{B_c}=\emptyset$,
and
$X_{\tau\geq u}^{G_2}\cap X_{\tau\geq u}^T=\emptyset$.
\item\label{dfbskdbfhfb}
$X_{\tau\geq u}^{Gap}\setminus X_{\tau\geq u}^{T_c}=X_{\tau\geq u}^{T_{ps}}\cup X_{\tau\geq u}^{G_2}$.
\item\label{skfhshsjkshfjj12}
For $v\in X_{\tau<u}$ and $x\in X_{\tau\geq u}$, 
\begin{equation}\label{dgskjdfhsasfGAPOS}
\g{x}{\bot_{[v]}}=
\left\{
\begin{array}{ll}
(\g{x}{v})_\Downarrow(<\g{x}{v}\in X_{\tau\geq u}^{T_c}) & \mbox{if $x\in X_{\tau\geq u}^{T_c}$}\\
(\g{x}{v})_\downarrow(<\g{x}{v}\in X_{\tau\geq u}^{T_{ps}}) & \mbox{if $x\in X_{\tau\geq u}^{T_{ps}}$}\\
\ \g{x}{v} & \mbox{if $x\in X_{\tau\geq u}\setminus X_{\tau\geq u}^T$}\\
\end{array}
\right. .
\end{equation}
\item\label{Ranegalok}
If $x\in X_{\tau\geq u}^{T_{ps}}$ then $\nega{x_\downarrow}\in X_{\tau\geq u}^{T_{ps}}$, too.
\item\label{csekken}
If $x,y\in X_{\tau\geq u}^{T_{ps}}$ then $\g{(\g{x}{y})}{\nega{u}}\neq\g{x}{y}$.
\item\label{G1tau=u}
For $x\in X_{\tau\geq u}^{T_{ps}}$, $\tau(x)=u$. 
\item\label{ProdGapTable}
The following product table holds for $\te$: 
\begin{table}[h]
\tiny
\centering
\caption{{(Gaps):} If $v,w\in X_{\tau<u}$, $x,y\in X_{\tau\geq u}^{T_{ps}}$, $z,s\in X_{\tau\geq u}\setminus X_{\tau\geq u}^T$ then the following holds true.}
\label{ProdsGeneralCase}
\begin{tabular}{cccccccc}
$\te$ & \vline & $\bot_{[w]}$  & $w$ & $\top_{[w]}$  & $s$ & $y_\downarrow$ & $y$\\
\hline
$\bot_{[v]}$  & \vline &  & {\color{midgrey}$\bot_{[\g{v}{w}]}$} & {\color{midgrey}$\bot_{[\g{v}{w}]}$}  & {\color{midgrey}$\g{v}{s}$} &  & $(\g{v}{y})_\downarrow(<\g{v}{y}\in X_{\tau\geq u}^{T_{ps}})$\\
$v$  & \vline & {\color{midgrey}$\bot_{[\g{v}{w}]}$} & {\color{midgrey}$\g{v}{w}$} & {\color{midgrey}$\top_{[\g{v}{w}]}$} & $\in X_{\tau\geq u}\setminus X_{\tau\geq u}^T$  &  & ${\color{midgrey}\g{v}{y}}\in X_{\tau\geq u}^{T_{ps}}$ \\
$\top_{[v]}$ & \vline & {\color{midgrey}$\bot_{[\g{v}{w}]}$} & {\color{midgrey}$\top_{[\g{v}{w}]}$} & {\color{midgrey}$\top_{[\g{v}{w}]}$} & $\in X_{\tau\geq u}\setminus X_{\tau\geq u}^T$ &  & {\color{midgrey}$\g{v}{y}$}\\
$z$ & \vline & $\g{z}{w}$ & {\color{midgrey}$\g{z}{w}$} & {\color{midgrey}$\g{z}{w}$} & $\in X_{\tau\geq u}\setminus X_{\tau\geq u}^T$ & $\g{z}{y}$ & {\color{midgrey}$\g{z}{y}$}\\
$x_\downarrow$ & \vline &  &  &  & {\color{midgrey}$\g{x}{s}$} &  & \\
$x$ & \vline & \color{midgrey}$(\g{x}{w})_\downarrow$ & ${\color{midgrey}\g{x}{w}\in X_{\tau\geq u}^{T_{ps}}}$ & {\color{midgrey}$\g{x}{w}$} & $\in X_{\tau\geq u}\setminus X_{\tau\geq u}^T$ &  & \\
\end{tabular}
\end{table}
\end{enumerate}
\end{proposition}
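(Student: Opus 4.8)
The plan is to read all seven items as statements about how $\te$ and $\komp$ interact with the gap structure of $X_{\tau\geq u}$, the master tools being the $\tau$-lemma (Lemma~\ref{tau_lemma}), the extremal formulas $\top_{[v]}=\g{v}{u}$, $\bot_{[v]}=\g{v}{\nega{u}}$ and $\bot_{[v]}=\nega{(\top_{[\nega{v}]})}$ from Proposition~\ref{topbotEXISTszorzotabla}, the identities (\ref{FelNeg_NegLe}), (\ref{eq_quasi_inverse}), (\ref{eq_feltukrozes_CS}), and the fact (claim~(\ref{lexikoS}) of Lemma~\ref{faktorok}) that minimality of $u$ forces $X_{\tau<u}=X_{gr}$, so every component $[v]$ is a coset of the group $[t]=\,]\nega{u},u[$ and multiplication by $v\in X_{\tau<u}$ is a strictly monotone bijection. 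Throughout I abbreviate $\sigma(x):=\g{x}{\nega{u}}$; since $\nega{u}\leq t$, $\sigma$ is isotone with $\sigma(x)\leq x$, $\tau(\sigma(x))=\tau(x)$ by Lemma~\ref{tau_lemma}, and $\sigma(\top_{[v]})=\bot_{[v]}$ by (\ref{JHGFJhgkjkHKJH}). I would begin with item~(\ref{G1tau=u}), which unlocks the rest: for $x\in X_{\tau\geq u}^{T_{ps}}$ one has $\nega{x_\downarrow}=\res{\te}{x}{u}$ by (\ref{eq_quasi_inverse}), so $d:=\g{x}{\nega{x_\downarrow}}=\g{x}{(\res{\te}{x}{u})}\leq u$ by adjointness, while $\g{x}{\nega{x_\downarrow}}\leq f$ would force $x\leq x_\downarrow$, so $d>t$, and $\tau(d)=\tau(x)\geq u$; as $]t,u[\subseteq[t]$ consists of elements with $\tau<u$ (claim~(\ref{Akozepe}) of Lemma~\ref{faktorok}), we get $d=u$ and $\tau(x)=u$. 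The same computation with any $p$ in place of $x$ and $\sigma(p)$ in place of $x_\downarrow$ shows more generally that $\sigma(p)<p$ implies $\tau(p)=u$. Item~(\ref{csekken}) is then immediate from Diagonal Strict Increase (Proposition~\ref{diagonalSZIGORU}): for $x,y\in X_{\tau\geq u}^{T_{ps}}$, $\g{x}{y}>\g{x_\downarrow}{y_\downarrow}=\g{(\g{x}{y})}{(\g{\nega{u}}{\nega{u}})}=\sigma(\sigma(\g{x}{y}))$, and since $\sigma\leq\mathrm{id}$ the equality $\sigma(\g{x}{y})=\g{x}{y}$ would contradict this; hence $\g{(\g{x}{y})}{\nega{u}}\neq\g{x}{y}$.

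Next I would attack item~(\ref{dfbskdbfhfb}). The inclusions $X_{\tau\geq u}^{T_{ps}}\cup X_{\tau\geq u}^{G_2}\subseteq X_{\tau\geq u}^{Gap}\setminus X_{\tau\geq u}^{T_c}$ are routine (both sets lie in $X_{\tau\geq u}^{Gap}$ by definition, and meet $X_{\tau\geq u}^{T_c}$ emptily since $\sigma(\top_{[w]})=\bot_{[w]}$ is neither $\top_{[w]}$ nor the cover of $\top_{[w]}$, the nonempty component $[w]$ lying strictly between). For the reverse inclusion, take $p\in X_{\tau\geq u}^{Gap}\setminus X_{\tau\geq u}^{T_c}$: if $\sigma(p)=p$ then $p\in X_{\tau\geq u}^{G_2}$; if $\sigma(p)<p$ then by the computation above $\tau(p)=u$, and $p_\downarrow\in X_{\tau\geq u}$ (otherwise $p_\downarrow$ would sit in a component and, $p$ covering it, $p$ would be a component top, i.e. in $X_{\tau\geq u}^{T_c}$), whence $\sigma(p)\leq p_\downarrow$. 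It then remains to show $\sigma(p)\geq p_\downarrow$ — this is the crux, deferred to the last paragraph.

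With item~(\ref{dfbskdbfhfb}) available the remaining items follow. For item~(\ref{Ranegalok}): (\ref{FelNeg_NegLe}) shows $\nega{x_\downarrow}$ covers $\nega{x}$, so $\nega{x_\downarrow}\in X_{\tau\geq u}^{Gap}$; using $\g{\nega{x_\downarrow}}{x_\downarrow}=\nega{\tau(x_\downarrow)}=\nega{u}\leq t$ (via item~(\ref{G1tau=u})) and adjointness one gets $\sigma(\nega{x_\downarrow})\leq\nega{x}$, excluding the $G_2$-value, while $X_{\tau\geq u}^{T_c}$ is excluded since $\nega{x}\in X_{\tau\geq u}$ cannot lie in a component; by item~(\ref{dfbskdbfhfb}) then $\nega{x_\downarrow}\in X_{\tau\geq u}^{T_{ps}}$. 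Item~(\ref{NoTopGap1lehetseges}): $X_{\tau\geq u}^{B_{ps}}\subseteq X_{\tau\geq u}$ is $\tau(x_\downarrow)=\tau(x)\geq u$; $X_{\tau\geq u}^{T_{ps}}\cap X_{\tau\geq u}^{T_c}=\emptyset$ and $X_{\tau\geq u}^{G_2}\cap X_{\tau\geq u}^T=\emptyset$ follow from $\sigma(\top_{[v]})=\bot_{[v]}<\top_{[v]}$ and from $\sigma(x)<x$ on $X_{\tau\geq u}^T$; and applying $\komp$, which carries $X_{\tau\geq u}^{T_c}$ to $X_{\tau\geq u}^{B_c}$ by (\ref{phvsfkfb}) and $X_{\tau\geq u}^{T_{ps}}$ to $X_{\tau\geq u}^{B_{ps}}$ by item~(\ref{Ranegalok}), converts the two $T$-disjointnesses into the $B$-disjointnesses. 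For item~(\ref{skfhshsjkshfjj12}), $\g{x}{\bot_{[v]}}=\g{(\g{x}{v})}{\nega{u}}=\sigma(\g{x}{v})$, and because $v\in X_{\tau<u}=X_{gr}$ is invertible, multiplication by $v$ is a strictly monotone bijection (claim~(\ref{nagyonSzigoru}) of Proposition~\ref{elozetes}) preserving covers and the extremal formulas, hence carrying each of $X_{\tau\geq u}^{T_c}$, $X_{\tau\geq u}^{T_{ps}}$, $X_{\tau\geq u}\setminus X_{\tau\geq u}^T$ into itself; so $p:=\g{x}{v}$ has the same type as $x$, and evaluating $\sigma$ on $p$ (by (\ref{JHGFJhgkjkHKJH}), by the definition of $T_{ps}$, and by item~(\ref{dfbskdbfhfb}) respectively) gives the three lines of (\ref{dgskjdfhsasfGAPOS}). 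Finally item~(\ref{ProdGapTable}) is assembled entrywise from item~(\ref{skfhshsjkshfjj12}), Table~\ref{Prods1stCIKK}, the definitions, and item~(\ref{csekken}) (which ensures a $T_{ps}$-element times a component element lands in $X_{\tau\geq u}^{T_{ps}}$, not $X_{\tau\geq u}^{G_2}$).

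The main obstacle is the deferred core of item~(\ref{dfbskdbfhfb}): ruling out the \emph{double drop} $\sigma(p)<p_\downarrow$. The difficulty is structural rather than formal: $\sigma(p)=\g{p}{\nega{u}}$ equals $p_\downarrow$ only if the cover $p_\downarrow$ is actually attained as a value $\g{p}{w}$, yet the range of $w\mapsto\g{p}{w}$ jumps from $\sigma(p)$ straight to $p$, skipping the whole interval $]\sigma(p),p[$; so what must be shown is that this skipped interval contains no element of $X_{\tau\geq u}$, i.e. that $\sigma(p)$ is the predecessor of $p$ inside $X_{\tau\geq u}$. I expect to close this by exploiting minimality of $u$ — so that the only elements between consecutive members of $X_{\tau\geq u}$ form a single group-coset component — together with (\ref{eq_feltukrozes_CS}) applied to $\nega{p}$: that inequality bounds $\sigma(p)$ from below by $\nega{(\inf_{s>u}\g{\nega{p}}{s})}$, and one shows $\inf_{s>u}\g{\nega{p}}{s}$ equals the immediate successor $\nega{(p_\downarrow)}$ of $\nega{p}$ in $X_{\tau\geq u}$ (any such infimum lies in $X_{\tau\geq u}$ and strictly above $\nega{p}$, hence is $\geq\nega{(p_\downarrow)}$, and the reverse reachability is where minimality of $u$ and the coset structure enter), whence dualizing yields $\sigma(p)\geq p_\downarrow$.
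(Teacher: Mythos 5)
Your treatment of items~(\ref{G1tau=u}) and (\ref{csekken}) is correct (the latter is essentially the paper's own argument, the former a sound variant via Lemma~\ref{tau_lemma} and $\tau(u)=u$), and the derivations of items~(\ref{NoTopGap1lehetseges}), (\ref{skfhshsjkshfjj12}), (\ref{Ranegalok}), (\ref{ProdGapTable}) from item~(\ref{dfbskdbfhfb}) are workable. But the entire proposal routes through the step you yourself call the main obstacle --- excluding the double drop $\g{p}{\nega{u}}<p_\downarrow$ for $p\in X_{\tau\geq u}^{Gap}\setminus X_{\tau\geq u}^{T_c}$ --- and what you offer there is an expectation, not a proof. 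Worse, the route you sketch cannot be completed as stated. From your instantiation of (\ref{eq_feltukrozes_CS}) you need either a witness $s>u$ with $\g{\nega{p}}{s}\leq \nega{(p_\downarrow)}$, or the unattained-infimum statement that no lower bound of $\{\g{\nega{p}}{s} : s>u\}$ lies above $\nega{(p_\downarrow)}$. The witness form provably fails in general: for $p\in X_{\tau\geq u}^{T_{ps}}$ one has $\g{p}{\nega{p}}=\nega{\tau(p)}=\nega{u}$ by (\ref{eq_quasi_inverse}) and item~(\ref{G1tau=u}), hence $\g{\nega{p}}{p_\downarrow}=\g{\left(\g{p}{\nega{p}}\right)}{\nega{u}}=\g{\nega{u}}{\nega{u}}$, so $\res{\te}{\nega{p}}{\nega{(p_\downarrow)}}=\nega{\left(\g{\nega{u}}{\nega{u}}\right)}$; whenever $\nega{u}$ is idempotent this residuum equals $u$ and no $s>u$ exists. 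Such situations occur: in $\PLPIII{\pmb{\mathbb R}}{\pmb{\mathbb R}}{\pmb{\mathbb Z}}{\pmb{\mathbb R}}$ the pseudo extremals $(r,\top)$, $r\in\mathbb R\setminus\mathbb Z$, lie in $X_{\tau\geq u}^{T_{ps}}$ while $\nega{u}=(0,\bot)$ is idempotent. So your argument must pass through the infimum claim --- for every $w>\nega{(p_\downarrow)}$ there is $s>u$ with $\g{\nega{p}}{s}<w$ --- which is exactly as strong as what is being proved; ``minimality of $u$ and the coset structure'' is not an argument for it, and since the chain is not assumed complete the infimum need not even exist.

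The fix is to instantiate (\ref{eq_feltukrozes_CS}) the other way round, taking the gap itself as the strict pair: set $y_1=p>y=p_\downarrow$ and $x=\bot_{[v]}$. Since $\nega{(\bot_{[v]})}=\top_{[\nega{v}]}$ by (\ref{phvsfkfb}), this yields $\g{p}{\bot_{[v]}}\geq\nega{\left(\g{\top_{[\nega{v}]}}{\nega{(p_\downarrow)}}\right)}$. Because $p\in X_{\tau\geq u}^{Gap}\setminus X_{\tau\geq u}^{T_c}$, the covered element $p_\downarrow$ must lie in $X_{\tau\geq u}$ (otherwise $p$ would be the top extremal of the component of $p_\downarrow$), hence $\nega{(p_\downarrow)}\in X_{\tau\geq u}$ by Lemma~\ref{tau_lemma}, and Table~\ref{Prods1stCIKK}$_{(3,4)}$ collapses $\g{\top_{[\nega{v}]}}{\nega{(p_\downarrow)}}$ to $\g{\nega{v}}{\nega{(p_\downarrow)}}$; finally (\ref{eq_feltukrozes}) gives $\nega{\left(\g{\nega{v}}{\nega{(p_\downarrow)}}\right)}\geq\g{v}{p_\downarrow}$. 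Altogether $\g{p}{\bot_{[v]}}\geq\g{p_\downarrow}{v}$, and setting $v=t$ gives $\g{p}{\nega{u}}\geq p_\downarrow$; combined with $\g{p}{\nega{u}}\leq p$ and the fact that $p$ covers $p_\downarrow$, this closes item~(\ref{dfbskdbfhfb}). This two-line instantiation is precisely the paper's proof, and it is the idea missing from your proposal.
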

\begin{proof}
\item[(\ref{NoTopGap1lehetseges}):]
By Lemma~\ref{tau_lemma} and claims (\ref{FENTszimmetrikus}) and (\ref{lemasoljaGENzeta}) in Proposition~\ref{zeta_1},
for $x\in X_{\tau\geq u}^{T_{ps}}$, $\tau(x_\downarrow)=\tau(\g{x}{\nega{u}})\geq\tau(\nega{u})=u$ holds, ensuring $X_{\tau\geq u}^{B_{ps}}\subseteq X_{\tau\geq u}$.
Second, if $X_{\tau\geq u}^{T_{ps}}\ni x=\top_{[v]}\in X_{\tau\geq u}^{T_c}$ then 
$x>x_\downarrow\in X_{\tau\geq u}$ holds by the previous item, hence $\top_{[v]}>(\top_{[v]})_\downarrow$.
It follows that $(\top_{[v]})_\downarrow$ is in $X_{\tau<u}$ since $\top_{[v]}$ is a top-extremal of a convex component in $X_{\tau<u}$, a contradiction to $x_\downarrow\in X_{\tau\geq u}$.
A completely analogous argument proves $X_{\tau\geq u}^{B_{ps}}\cap X_{\tau\geq u}^{B_c}=\emptyset$:
If $x\in X_{\tau\geq u}^{B_{ps}}$ then $x<x_\uparrow\in X_{\tau\geq u}^{T_{ps}}\subseteq X_{\tau\geq u}$. 
But if $x\in X_{\tau\geq u}^{B_c}$ then $x<x_\uparrow\in X_{\tau<u}$, a contradiction.
Finally, obviously, $X_{\tau\geq u}^{G_2}\cap X_{\tau\geq u}^{T_{ps}}=\emptyset$, hence it suffices to prove
$X_{\tau\geq u}^{G_2}\cap X_{\tau\geq u}^{T_c}=\emptyset$.
If $x\in X_{\tau\geq u}^{G_2}$ then $\g{x}{\nega{u}}=x$, whereas if $x\in X_{\tau\geq u}^{T_c}$ then by Table~\ref{Prods1stCIKK}$_{(3,1)}$, $\g{x}{\nega{u}}=\bot_{[v]}$ holds. But $\bot_{[v]}\neq x$.

\item[(\ref{dfbskdbfhfb}):]
First we claim $\g{x}{\bot_{[v]}}\geq\g{x_\downarrow}{v}$ for $x\in X_{\tau\geq u}^{Gap}\setminus X_{\tau\geq u}^{T_c}$.
Indeed, $\g{x}{\bot_{[v]}}\geq\nega{(\g{\nega{x_\downarrow}}{\top_{[\nega{v}]}})}$ follows from (\ref{eq_feltukrozes_CS}) since $x>x_\downarrow$.
Now $x\neq\top_{[v]}$, and it implies $x_\downarrow\in X_{\tau\geq u}$, that is, $\nega{x_\downarrow}\in X_{\tau\geq u}$ by Lemma~\ref{tau_lemma}, and hence $\nega{(\g{\nega{x_\downarrow}}{\top_{[\nega{v}]}})}=\nega{(\g{\nega{x_\downarrow}}{\nega{v}})}$
follows by Table~\ref{Prods1stCIKK}$_{(3,4)}$.
By (\ref{eq_feltukrozes}), $\nega{(\g{\nega{x_\downarrow}}{\nega{v}})}\geq\g{x_\downarrow}{v}$, and we are done.
Setting $v=t$ in the claim proves claim~(\ref{dfbskdbfhfb}).

\item[(\ref{skfhshsjkshfjj12}):]
For $x\in X_{\tau\geq u}$, either $x\in X_{\tau\geq u}^{T_c}$ holds, or $x\notin X_{\tau\geq u}^{T_c}$ and $x_\downarrow=x$ holds, or $x\notin X_{\tau\geq u}^{T_c}$ and $x_\downarrow<x$ holds.
In the first case, by Table~\ref{Prods1stCIKK}$_{(1,3)}$ and Table~\ref{Prods1stCIKK}$_{(2,3)}$, we obtain $\g{\top_{[w]}}{\bot_{[v]}}=\bot_{[\g{w}{v}]}=(\top_{[\g{w}{v}]})_\Downarrow=(\g{\top_{[w]}}{v})_\Downarrow$, as required.
In the second case, $\g{x}{\bot_{[v]}}=\g{x}{v}$ holds by Table~\ref{Prods1stCIKK}$_{(4,1)}$, as required in the third row of (\ref{dgskjdfhsasfGAPOS}).

In the third case, either $x\in X_{\tau\geq u}^{T_{ps}}$ or $x\in X_{\tau\geq u}^{G_2}$ holds by item (\ref{dfbskdbfhfb}).
Hence, since $X_{\tau\geq u}^{G_2}\cap X_{\tau\geq u}^T=\emptyset$ by claim (\ref{NoTopGap1lehetseges}), 
to prove (\ref{dgskjdfhsasfGAPOS}) it remains to prove
$$\g{x}{\bot_{[v]}}=
\left\{
\begin{array}{ll}
(\g{x}{v})_\downarrow(<\g{x}{v}\in X_{\tau\geq u}^{T_{ps}})
 & \mbox{if $x\in X_{\tau\geq u}^{T_{ps}}$}\\
\g{x}{v} & \mbox{if $x\in X_{\tau\geq u}^{G_2}$}\\
\end{array}
\right. .
$$
By (\ref{JHGFJhgkjkHKJH}), $\g{x}{\bot_{[v]}}=\g{x}{\g{v}{\nega{u}}}$ holds.
If $x\in X_{\tau\geq u}^{G_2}$ then $\g{x}{\g{v}{\nega{u}}}$ is equal to $\g{x}{v}$ and thus the proof of the second row is concluded.
If $x\in X_{\tau\geq u}^{T_{ps}}$ then $\g{x}{\g{v}{\nega{u}}}$ cannot be equal to $\g{x}{v}$ since -- referring to claim~(\ref{nagyonSzigoru}) in Proposition~\ref{elozetes} -- by cancelling $v$ we would obtain $x_\downarrow=\g{x}{\nega{u}}=x$, a contradiction.
Therefore, $\g{(\g{x}{v})}{\nega{u}}<\g{x}{v}$ holds.
Finally, the assumption that there exists $c\in X$ such that $\g{(\g{x}{v})}{\nega{u}}<c<\g{x}{v}$ would lead, again cancelling by $v$, to $x_\downarrow=\g{x}{\nega{u}}<\g{c}{\nega{v}}<x$, a contradiction.
Hence, $\g{(\g{x}{v})}{\nega{u}}=(\g{x}{v})_\downarrow<\g{x}{v}$.
The proof of (\ref{dgskjdfhsasfGAPOS}) is concluded.

\item[(\ref{Ranegalok}):]
Let $x\in X_{\tau\geq u}^{T_{ps}}$.
Then $x_\downarrow\in X_{\tau\geq u}^{B_{ps}}\subseteq X_{\tau\geq u}$ and $x_\downarrow\notin X_{\tau\geq u}^{B_c}$ follow by claim~(\ref{NoTopGap1lehetseges}).
Hence $\nega{x_\downarrow}\notin X_{\tau\geq u}^{T_c}$ follows from (\ref{phvsfkfb}).
By Lemma~\ref{tau_lemma}, $\nega{x_\downarrow}\in X_{\tau\geq u}$ follows from $x_\downarrow\in X_{\tau\geq u}$,
thus $\nega{x_\downarrow}\in X_{\tau\geq u}\setminus X_{\tau\geq u}^{T_c}$ holds.
Hence by (\ref{dgskjdfhsasfGAPOS}), 
$\g{\nega{u}}{\nega{x_\downarrow}}\geq(\nega{x_\downarrow})_\downarrow$ holds true.
On the other hand, $\g{x}{\nega{u}}=x_\downarrow$ implies $\res{\te}{\nega{u}}{x_\downarrow}\geq x$ and hence by (\ref{eq_quasi_inverse}), $x_\downarrow<x$ and (\ref{FelNeg_NegLe}), $\g{\nega{u}}{\nega{x_\downarrow}}\leq\nega{x}=(\nega{x_\downarrow})_\downarrow<\nega{x_\downarrow}$ holds.
Summing up, we obtained $\g{\nega{x_\downarrow}}{\nega{u}}=(\nega{x_\downarrow})_\downarrow$ and $(\nega{x_\downarrow})_\downarrow<\nega{x_\downarrow}$, as required.

\item[(\ref{csekken}):]
From the opposite of claim~(\ref{csekken}), $\g{x}{y}=\g{(\g{(\g{x}{y})}{\nega{u}})}{\nega{u}}=\g{(\g{x}{\nega{u}})}{(\g{y}{\nega{u}})}=\g{x_\downarrow}{y_\downarrow}$ would follow, contradicting Proposition~\ref{diagonalSZIGORU}. 
\item[(\ref{G1tau=u}):]
$\tau(x)\geq u$ is straightforward.
It holds true that $\g{x}{(\g{\tau(x)}{\nega{u}})}=\g{(\g{x}{\tau(x)})}{\nega{u}}=\g{x}{\nega{u}}=x_\downarrow<x$.
Hence, by monotonicity of $\te$, $\g{\tau(x)}{\nega{u}}<t$ follows, which implies $\tau(x)\leq \nega{(\nega{u})}=u$ by residuation. 
\item[(\ref{ProdGapTable}):]
Grey items in Table~\ref{ProdsGeneralCase} are either straightforward, or 
can readily be seen by the commutativity of $\te$,
or are inherited from Table~\ref{Prods1stCIKK}, so it suffices to prove the items in black.
\begin{itemize}
\item
Table~\ref{ProdsGeneralCase}$_{(4,1)}$ and Table~\ref{ProdsGeneralCase}$_{(1,6)}$ follow from (\ref{dgskjdfhsasfGAPOS}).
\item
As for Table~\ref{ProdsGeneralCase}$_{(2,4)}$, Table~\ref{ProdsGeneralCase}$_{(3,4)}$, 
Table~\ref{ProdsGeneralCase}$_{(4,4)}$, Table~\ref{ProdsGeneralCase}$_{(6,4)}$, by Lemma~\ref{tau_lemma}, the products
$\g{v}{s}$,  
$\g{\top_{[v]}}{s}$,  
$\g{z}{s}$,  
$\g{x}{s}$,  respectively
are in $X_{\tau\geq u}$. Assume by contradiction that any of these products is in $X_{\tau\geq u}^T$, that is, 
$\g{?}{s}\in X_{\tau\geq u}^T$ for $?\in\{v,\top_{[v]},z,x\}$. 
Then, by (\ref{dgskjdfhsasfGAPOS}), $\g{\nega{u}}{(\g{?}{s})}=(\g{?}{s})_\Downarrow<\g{?}{s}$ holds, whereas $\g{?}{(\g{\nega{u}}{s})}=\g{?}{s}$ holds by Table~\ref{ProdsGeneralCase}$_{(1,4)}$, a contradiction.
\item
As for Table~\ref{ProdsGeneralCase}$_{(2,6)}$, we need to prove that 
$(\g{v}{y})_\downarrow<\g{v}{y}$ and $\g{(\g{v}{y})}{\nega{u}}=(\g{v}{y})_\downarrow$.
Indeed, $\g{(\g{v}{y})}{\nega{u}}=\g{v}{y}$ and cancellation by $v$ (see claim~(\ref{nagyonSzigoru}) in Proposition~\ref{elozetes}) would yield $y_\downarrow=\g{y}{\nega{u}}=y$, a contradiction, so $\g{(\g{v}{y})}{\nega{u}}<\g{v}{y}$ follows.
If there were $c$ such that $\g{(\g{v}{y})}{\nega{u}}<c<\g{v}{y}$ then a cancellation by $v$ would yield $y_\downarrow=\g{y}{\nega{u}}<\g{\nega{v}}{c}<y$, a contradiction.
Summing up, $\g{(\g{v}{y})}{\nega{u}}=(\g{v}{y})_\downarrow<\g{v}{y}$, and hence the proof of Table~\ref{ProdsGeneralCase}$_{(2,6)}$ is concluded.
\item
Finally, $\g{z}{y_\downarrow}=\g{z}{(\g{y}{\nega{u}})}=\g{(\g{z}{\nega{u}})}{y}$, and by (\ref{dgskjdfhsasfGAPOS}) it is equal to $\g{z}{y}$ as stated in Table~\ref{ProdsGeneralCase}$_{(4,5)}$.
\end{itemize}
\end{proof}

\section{One-step decomposition  -- when $\nega{u}$ is idempotent}\label{OneDecIdemp}

\begin{definition}\label{GammaCollapsusDefi}
\rm
Let ${\mathbf X}=( X, \leq, \te, \ite{\te}, t, f )$ be an odd involutive FL$_e$-chain with residual complement $\komp$, such that there exists $u$, the smallest strictly positive idempotent. 
Assume $\nega{u}$ is idempotent.
Let
\begin{eqnarray*}
X_{[\tau\geq u]}^{E_{ps}}&=&\{\{\{p\},\{p_\downarrow\}\}\ | \ p\in X_{\tau\geq u}^{T_{ps}}\},\\
X_{[\tau\geq u]}^{E_c}&=&\{\{\{\top_{[v]}\}, \{\bot_{[v]}\}\}\ | \ v\in X_{\tau<u}\},\\
X_{[\tau\geq u]}^E & = & X_{[\tau\geq u]}^{E_c}\cup X_{[\tau\geq u]}^{E_{ps}}.
\end{eqnarray*}
Let $\gamma$ be defined on $\beta(X)$ by
$\gamma(x)=$
{
\begin{equation}\label{GammaDefiiii}
\begin{array}{l}
\left\{
\begin{array}{cl}
\{{[v]},\{\top_{[v]}\}, \{\bot_{[v]}\}\} & \mbox{if $x\in\{[v],\{\top_{[v]}\},\{\bot_{[v]}\}\}$ for some $v\in X_{\tau<u}$}\\
\{\{p\},\{p_\downarrow\}\} & \mbox{if $x\in \{\{p\},\{p_\downarrow\}\}\in X_{[\tau\geq u]}^{E_{ps}}$}\\
\{x\} & \mbox{if $x=\{p\}$, $p\in X_{\tau\geq u}\setminus X_{\tau\geq u}^E$}\\
\end{array}
\right. ,\\
\end{array}
\end{equation}
}
By letting $x\in a, y\in b$ for $a,b\in\gamma(\beta(X))=\{\gamma(z) : z\in \beta(X)\}$, over $\gamma(\beta(X))$ define
\begin{eqnarray}
& a\leq_\gamma b & \mbox{iff }  x\leq_\beta y, \label{gammas_rendezese}\\
& \ggamma{a}{b} & =  \gamma(\gbeta{x}{y}), \label{gammas_szorzatos}\\
& \res{\gamma}{a}{b} & =  \gamma(\res{\tebeta}{x}{y}), \label{gammas_implikacios}\\
& \negaM{\gamma}{a} & =  \gamma(\negaM{\beta}{x}) \label{gammas_nega}.
\end{eqnarray}
\end{definition}

\begin{proposition}\label{Overlapping}
$\gamma$, defined in Definition~\ref{GammaCollapsusDefi}, is well-defined.
\end{proposition}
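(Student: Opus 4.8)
The plan is to prove two logically independent things. First, that $\gamma$ is well-defined as a \emph{map} on $\beta(X)$, i.e.\ that the three clauses of (\ref{GammaDefiiii}) are exhaustive, cannot assign conflicting values, and each singles out one class. Second, that the four operations (\ref{gammas_rendezese})--(\ref{gammas_nega}) are independent of the chosen representatives $x\in a$, $y\in b$, which is the exact analogue of what was done for $\beta$ in the proof of claim~(\ref{<_Xx6}) of Lemma~\ref{faktorok}.

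For the first task I would start with exhaustiveness: every element of $\beta(X)$ is either a convex component $[v]$ with $v\in X_{\tau<u}$ (first clause) or a singleton $\{p\}$ with $p\in X_{\tau\geq u}$; in the latter case $p$ lies in $X_{\tau\geq u}^{T_c}\cup X_{\tau\geq u}^{B_c}$ (first clause), in $X_{\tau\geq u}^{T_{ps}}\cup X_{\tau\geq u}^{B_{ps}}$ (second clause), or in the complement $X_{\tau\geq u}\setminus X_{\tau\geq u}^E$ (third clause), so the clauses cover everything. Next, within the first clause the value is unambiguous because each of $[v]$, $\{\top_{[v]}\}$, $\{\bot_{[v]}\}$ recovers the \emph{same} component: using $\top_{[v]}=\g{v}{u}$ from (\ref{slhhsdjksjkgb}), together with $\g{u}{\nega{u}}=\nega{u}$ and (\ref{JHGFJhgkjkHKJH}), one gets $\g{\top_{[v]}}{\nega{u}}=\g{v}{\nega{u}}=\bot_{[v]}$, so a top-extremal determines its bottom-extremal and hence the open interval between them, and dually; moreover $X_{\tau\geq u}^{T_c}\cap X_{\tau\geq u}^{B_c}=\emptyset$ by claim~(\ref{HolTopAzBottom}) of Proposition~\ref{topbotEXISTszorzotabla} (this is where idempotency of $\nega{u}$ first enters), so no element plays both extremal roles. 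For the second clause each of $\{p\}$, $\{p_\downarrow\}$ recovers the same pair, since $p_\downarrow$ is the cover-predecessor of $p$.

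The genuinely delicate point — and, I expect, the main obstacle — is ruling out \emph{cross-overlaps} between the clauses, namely that an extremal of a component is never a pseudo-extremal and conversely. The equalities $X_{\tau\geq u}^{T_c}\cap X_{\tau\geq u}^{T_{ps}}=\emptyset$ and $X_{\tau\geq u}^{B_c}\cap X_{\tau\geq u}^{B_{ps}}=\emptyset$ are precisely claim~(\ref{NoTopGap1lehetseges}) of Proposition~\ref{khkfgjkdfjkdjkddjss}. For the remaining mixed pairs I would argue by the multiplication trick already used above: if $\top_{[v]}$ were a pseudo bottom-extremal, say $\top_{[v]}=\g{p}{\nega{u}}$ with $p\in X_{\tau\geq u}^{T_{ps}}$, then idempotency of $\nega{u}$ yields $\g{\top_{[v]}}{\nega{u}}=\g{p}{\nega{u}}=\top_{[v]}$, contradicting $\g{\top_{[v]}}{\nega{u}}=\bot_{[v]}<\top_{[v]}$; the symmetric computation disposes of $X_{\tau\geq u}^{B_c}\cap X_{\tau\geq u}^{T_{ps}}$, and the same device gives $X_{\tau\geq u}^{T_{ps}}\cap X_{\tau\geq u}^{B_{ps}}=\emptyset$, making the second clause internally consistent. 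Throughout, the hypothesis that $\nega{u}$ is idempotent is indispensable.

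For the second task I would mirror the proof of Lemma~\ref{faktorok}(\ref{<_Xx6}). For $\leq_\gamma$ it suffices to note that every $\gamma$-class is $\leq_\beta$-convex: a component together with its two extremals is the interval $\{\bot_{[v]}\}<_\beta[v]<_\beta\{\top_{[v]}\}$ with nothing of $\beta(X)$ strictly between, and a pseudo-extremal pair $\{\{p\},\{p_\downarrow\}\}$ consists of two $\leq_\beta$-adjacent singletons; hence the comparison of any two representatives gives the same answer. For $\tegamma$ and $\negaM{\gamma}$ I would read off the product Tables~\ref{Prods1stCIKK} and~\ref{ProdsGeneralCase} together with (\ref{phvsfkfb}) and claim~(\ref{Ranegalok}) of Proposition~\ref{khkfgjkdfjkdjkddjss}: these exhibit that the $\tebeta$-products (resp.\ the $\kompM{\beta}$-images) of the representatives of two fixed classes all land in one common $\gamma$-class, so $\gamma(\gbeta{x}{y})$ and $\gamma(\negaM{\beta}{x})$ do not depend on $x$ and $y$. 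Finally, since $\beta(\mathbf X)$ is an odd involutive FL$_e$-chain, the identity (\ref{eq_quasi_inverse}) holds in it, so $\res{\tebeta}{x}{y}=\negaM{\beta}{(\gbeta{x}{\negaM{\beta}{y}})}$; applying $\gamma$ and invoking the well-definedness of $\ggamma{}{}$ and $\negaM{\gamma}{}$ just established, exactly as in the computation (\ref{IgyKellEzt}), shows that $\res{\gamma}{a}{b}$ is well-defined too.
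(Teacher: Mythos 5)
Your proof is correct, and for what the paper actually establishes under this proposition it follows essentially the same route. The paper's proof consists precisely of your ``first task'': it shows that distinct members of $X_{[\tau\geq u]}^E$ are disjoint, citing claim~(\ref{HolTopAzBottom}) of Proposition~\ref{topbotEXISTszorzotabla} for $X_{\tau\geq u}^{T_c}\cap X_{\tau\geq u}^{B_c}=\emptyset$, claim~(\ref{NoTopGap1lehetseges}) of Proposition~\ref{khkfgjkdfjkdjkddjss} for the pairs $X_{\tau\geq u}^{T_c}\cap X_{\tau\geq u}^{T_{ps}}$ and $X_{\tau\geq u}^{B_c}\cap X_{\tau\geq u}^{B_{ps}}$, and exactly your multiply-by-$\nega{u}$-and-use-idempotency computation for the remaining mixed cases $p=\bot_{[v]}$ and $p_\downarrow=\top_{[v]}$ as well as for the internal consistency of the pseudo-extremal pairs (the paper packages the latter as: $p_\downarrow\in X_{\tau\geq u}^{G_2}$, and $X_{\tau\geq u}^{G_2}\cap X_{\tau\geq u}^{T_{ps}}=\emptyset$ by claim~(\ref{NoTopGap1lehetseges}) --- the identical calculation).

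Two remarks on your ``second task''. The paper reads the proposition narrowly: well-definedness here means only that the clauses of (\ref{GammaDefiiii}) cannot conflict, and the representative-independence of (\ref{gammas_rendezese})--(\ref{gammas_nega}) is deferred to claim~(1) of Lemma~\ref{GammaHomo}, where it is proved as part of showing that $\gamma$ is a homomorphism; so that portion of your proposal is extra material rather than a gap. Within it, one citation is imprecise: Tables~\ref{Prods1stCIKK} and~\ref{ProdsGeneralCase} alone do not suffice to check that $\tebeta$-products of representatives land in one common $\gamma$-class, since Table~\ref{ProdsGeneralCase} is blank exactly at the entries needed in the idempotent case (e.g.\ $\g{\bot_{[v]}}{\bot_{[w]}}$, $\g{\bot_{[v]}}{y_\downarrow}$, $\g{x_\downarrow}{y_\downarrow}$, and $\g{x}{y}$ for $x,y\in X_{\tau\geq u}^{T_{ps}}$); those entries are supplied only by Proposition~\ref{Kalap} (Table~\ref{Prods1stCIKKkiegeszitveIDEMPOTENS}), which is what the paper invokes in Lemma~\ref{GammaHomo}.
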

\begin{proof}
We need to prove the following statement: If $a,b\in X_{[\tau\geq u]}^E$, $a\neq b$ then $a\cap b=\emptyset$.
Indeed, the case $a,b\in X_{[\tau\geq u]}^{E_c}$, $a\neq b$ follows from claim~\ref{HolTopAzBottom} in Proposition~\ref{topbotEXISTszorzotabla}.
Next, if $p\in X_{\tau\geq u}^{T_{ps}}$ then $p_\downarrow\in X_{\tau\geq u}^{G_2}$, since 
$\g{p_\downarrow}{\nega{u}}=\g{(\g{p}{\nega{u}})}{\nega{u}}=\g{p}{(\g{\nega{u}}{\nega{u}})}=\g{p}{\nega{u}}=p_\downarrow$.
Therefore, the case $a,b\in X_{[\tau\geq u]}^{E_{ps}}$, $a\neq b$ follows, since $X_{\tau\geq u}^{T_{ps}}$ and $X_{\tau\geq u}^{G_2}$ are disjoint by claim~(\ref{NoTopGap1lehetseges}) in Proposition~\ref{khkfgjkdfjkdjkddjss}. 
Hence it remains to prove $\{\{\top_{[v]}\},\{\bot_{[v]}\}\}\cap\{\{p\},\{p_\downarrow\}\}=\emptyset$ for $v\in X_{\tau <u}$ and $p\in X_{\tau\geq u}^{T_{ps}}$.
The impossibility of 
$p=\top_{[v]}$ or $p_\downarrow=\bot_{[v]}$ is clear from $|[v]|\neq\emptyset$. 
As for $p\neq\bot_{[v]}$, by contradiction, $\g{\bot_{[v]}}{\nega{u}}={{(\bot_{[v]})}}_\downarrow$ would lead, by 
Table~\ref{Prods1stCIKK}$_{(2,1)}$,
to $\bot_{[v]}=\g{v}{\bot_{[t]}}=\g{v}{\nega{u}}=\g{v}{(\g{\nega{u}}{\nega{u}})}=\g{(\g{v}{\nega{u}})}{\nega{u}}=\g{\bot_{[v]}}{\nega{u}}={{(\bot_{[v]})}}_\downarrow$, a contradiction to $p>p_\downarrow$.
Finally, $p_\downarrow=\top_{[v]}$ would imply 
$\top_{[v]}=p_\downarrow=\g{p}{\nega{u}}=\g{p}{(\g{\nega{u}}{\nega{u}})}=\g{(\g{p}{\nega{u}})}{\nega{u}}=\g{p_\downarrow}{\nega{u}}=\g{\top_{[v]}}{\nega{u}}=\bot_{[v]}$ by Table~\ref{Prods1stCIKK}$_{(3,1)}$, which is a contradiction.
\end{proof}

\begin{proposition}\label{Kalap}
Let ${\mathbf X}=( X, \leq, \te, \ite{\te}, t, f )$ be an odd involutive FL$_e$-chain with residual complement $\komp$, such that there exists $u$, the smallest strictly positive idempotent element. 
Assume $\nega{u}$ is idempotent.
The following product table holds for $\te$ (in the bottom-right $2\times2$ entries 
the formula on the left applies if $\g{x}{y}>(\g{x}{y})_\downarrow\in X_{\tau\geq u}$, whereas the formula on the right applies if
$\g{x}{y}>(\g{x}{y})_\downarrow\in X_{\tau<u}$
or
$\g{x}{y}=(\g{x}{y})_\downarrow$.
\begin{table}[h]
\tiny
\centering
\caption{{(When $\nega{u}$ is idempotent):} If $v,w\in X_{\tau<u}$, $x,y\in X_{\tau\geq u}^{T_{ps}}$, $z,s\in X_{\tau\geq u}\setminus X_{\tau\geq u}^T$ then the following holds true.}
\label{Prods1stCIKKkiegeszitveIDEMPOTENS}
\begin{tabular}{cccccccc}
$\te$ & \vline & $\bot_{[w]}$  & $w$ & $\top_{[w]}$  & $s$ & $y_\downarrow$ & $y$\\
\hline
$\bot_{[v]}$  & \vline & $\bot_{[\g{v}{w}]}$ & {\color{midgrey}$\bot_{[\g{v}{w}]}$} & {\color{midgrey}$\bot_{[\g{v}{w}]}$}  & {\color{midgrey}$\g{v}{s}$} & $(\g{v}{y})_\downarrow$ &  {\color{midgrey}$(\g{v}{y})_\downarrow$}\\
$v$  & \vline & {\color{midgrey}$\bot_{[\g{v}{w}]}$} & {\color{midgrey}$\g{v}{w}$} & {\color{midgrey}$\top_{[\g{v}{w}]}$} & {\color{midgrey}$\in X_{\tau\geq u}\setminus X_{\tau\geq u}^T$}  & $(\g{v}{y})_\downarrow$ & ${\color{midgrey}\g{v}{y}\in X_{\tau\geq u}^{T_{ps}}}$ \\
$\top_{[v]}$ & \vline & {\color{midgrey}$\bot_{[\g{v}{w}]}$} & {\color{midgrey}$\top_{[\g{v}{w}]}$} & {\color{midgrey}$\top_{[\g{v}{w}]}$} & {\color{midgrey}$\in X_{\tau\geq u}\setminus X_{\tau\geq u}^T$} & $(\g{v}{y})_\downarrow$ & {\color{midgrey}$\g{v}{y}$}\\
$z$ & \vline & {\color{midgrey}$\g{z}{w}$} & {\color{midgrey}$\g{z}{w}$} & {\color{midgrey}$\g{z}{w}$} & {\color{midgrey}$\in X_{\tau\geq u}\setminus X_{\tau\geq u}^T$} & {\color{midgrey}$\g{z}{y}$} & {\color{midgrey}$\g{z}{y}$}\\
$x_\downarrow$ & \vline & \color{midgrey}$(\g{x}{w})_\downarrow$ & \color{midgrey}$(\g{x}{w})_\downarrow$ & \color{midgrey}$(\g{x}{w})_\downarrow$ & {\color{midgrey}$\g{x}{s}$} 
& $(\g{x}{y})_\downarrow$ $|$ $\bot_{[r]}$ & $(\g{x}{y})_\downarrow$ $|$ $\bot_{[r]}$\\
$x$ & \vline & {\color{midgrey}$(\g{x}{w})_\downarrow$} & {\color{midgrey}$\g{x}{w}\in X_{\tau\geq u}^{T_{ps}}$} & {\color{midgrey}$\g{x}{w}$} & {\color{midgrey}$\in X_{\tau\geq u}\setminus X_{\tau\geq u}^T$} & $(\g{x}{y})_\downarrow$ $|$ $\bot_{[r]}$ & $\g{x}{y}\in X_{\tau\geq u}^{T_{ps}}$ $|$ $\top_{[r]}$\\
\end{tabular}
\end{table}
\end{proposition}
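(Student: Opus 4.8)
The plan is to read Table~\ref{Prods1stCIKKkiegeszitveIDEMPOTENS} as a refinement of Table~\ref{ProdsGeneralCase} under the extra hypothesis that $\nega u$ is idempotent: the grey entries are inherited from Table~\ref{ProdsGeneralCase}, follow by commutativity of $\te$, or are straightforward, so I would only verify the black entries. The first batch is immediate from the product formulas (\ref{slhhsdjksjkgb}), (\ref{JHGFJhgkjkHKJH}), together with $\g{u}{\nega u}=\nega u$ and, crucially, the idempotency $\g{\nega u}{\nega u}=\nega u$. Indeed $\g{\bot_{[v]}}{\bot_{[w]}}=\g{(\g{v}{\nega u})}{(\g{w}{\nega u})}=\g{(\g{v}{w})}{(\g{\nega u}{\nega u})}=\g{(\g{v}{w})}{\nega u}=\bot_{[\g{v}{w}]}$; and, writing $y_\downarrow=\g{y}{\nega u}$ (valid since $y\in X_{\tau\geq u}^{T_{ps}}$), each of $\g{\bot_{[v]}}{y_\downarrow}$, $\g{v}{y_\downarrow}$, $\g{\top_{[v]}}{y_\downarrow}$ collapses to $\g{(\g{v}{y})}{\nega u}$, which equals $(\g{v}{y})_\downarrow$ by the computation establishing Table~\ref{ProdsGeneralCase}$_{(2,6)}$.

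For the bottom-right $2\times2$ block I would first reduce. Writing $x_\downarrow=\g{x}{\nega u}$ and $y_\downarrow=\g{y}{\nega u}$ and using associativity with $\g{\nega u}{\nega u}=\nega u$, all three of $\g{x_\downarrow}{y}$, $\g{x}{y_\downarrow}$, $\g{x_\downarrow}{y_\downarrow}$ equal $\g{(\g{x}{y})}{\nega u}$, while the fourth entry is $\g{x}{y}$ itself. Thus the whole block is governed by the single element $\g{x}{y}$ together with $\g{(\g{x}{y})}{\nega u}$, and everything comes down to classifying $\g{x}{y}$. By Lemma~\ref{tau_lemma} and claim~(\ref{G1tau=u}) in Proposition~\ref{khkfgjkdfjkdjkddjss} one has $\tau(\g{x}{y})=u$, so $\g{x}{y}\in X_{\tau\geq u}$, and by claim~(\ref{csekken}) one has $\g{(\g{x}{y})}{\nega u}<\g{x}{y}$.

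Next I would read off the cases from the position of $(\g{x}{y})_\downarrow$, exactly as the caption prescribes. If $(\g{x}{y})_\downarrow<\g{x}{y}$ and $(\g{x}{y})_\downarrow\in X_{\tau\geq u}$, then $\g{x}{y}\in X_{\tau\geq u}^{Gap}$; it is not in $X_{\tau\geq u}^{G_2}$ by claim~(\ref{csekken}), and it is not in $X_{\tau\geq u}^{T_c}$ since the predecessor of a component-top is either itself (infinite component) or lies in $X_{\tau<u}$ (singleton component), neither of which is a proper predecessor in $X_{\tau\geq u}$; hence $\g{x}{y}\in X_{\tau\geq u}^{T_{ps}}$ by claim~(\ref{dfbskdbfhfb}), and then $\g{(\g{x}{y})}{\nega u}=(\g{x}{y})_\downarrow$ by the very definition of $X_{\tau\geq u}^{T_{ps}}$, giving the left-hand formulas. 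If instead $(\g{x}{y})_\downarrow\in X_{\tau<u}$, then $\g{x}{y}$ covers an element $r$ of $X_{\tau<u}$; since nontrivial components have no greatest element, this forces $u=t_\uparrow$, the component $[r]$ to be the singleton $\{r\}$, and $\g{x}{y}=r_\uparrow=\top_{[r]}$, whence $\g{(\g{x}{y})}{\nega u}=\g{\top_{[r]}}{\nega u}=\bot_{[r]}$ by Table~\ref{Prods1stCIKK}$_{(3,1)}$ — the right-hand formulas.

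The delicate case is $\g{x}{y}=(\g{x}{y})_\downarrow$, i.e.\ when $\g{x}{y}$ has no predecessor; here I must show $\g{x}{y}\in X_{\tau\geq u}^{T_c}$, say $\g{x}{y}=\top_{[r]}$ with $\bot_{[r]}=\g{(\g{x}{y})}{\nega u}$, so that the right-hand formulas again apply. This reduces to the key lemma that \emph{no} element of $X_{\tau\geq u}$ lies strictly between $\g{(\g{x}{y})}{\nega u}$ and $\g{x}{y}$: granting it, every element of the nonempty open interval between them lies in $X_{\tau<u}$, hence constitutes a single convex component whose bottom- and top-extremals are then forced to be $\g{(\g{x}{y})}{\nega u}$ and $\g{x}{y}$. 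I expect this lemma to be the main obstacle: unlike in claims~(\ref{skfhshsjkshfjj12}) and~(\ref{csekken}) of Proposition~\ref{khkfgjkdfjkdjkddjss}, one cannot cancel by the non-invertible factors $x,y$ (each has $\tau$-value $u>t$). Instead I would argue by residuation: a hypothetical $d\in X_{\tau\geq u}$ with $\g{(\g{x}{y})}{\nega u}<d<\g{x}{y}$ forces $\res{\te}{x}{d}=y_\downarrow$ and $\res{\te}{y}{d}=x_\downarrow$ through the covering relations $x>x_\downarrow$, $y>y_\downarrow$; translating these through the quasi-inverse identity (\ref{eq_quasi_inverse}), the stabilization identity $\g{u}{x}=x$ (valid since $\tau(x)=u$), and claim~(\ref{Ranegalok}), should produce a configuration contradicting the diagonal strict increase of Proposition~\ref{diagonalSZIGORU}. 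Carrying out this residuation bookkeeping carefully is the crux of the argument.
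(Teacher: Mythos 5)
Your handling of every entry outside the bottom-right $2\times2$ block is correct (your direct computations via (\ref{JHGFJhgkjkHKJH}), $\g{u}{\nega{u}}=\nega{u}$ and the idempotency of $\nega{u}$ are, if anything, slicker than the paper's), and your reduction of that block to classifying the single element $\g{x}{y}$ is exactly the paper's reduction. The two cases in which $\g{x}{y}$ has a predecessor are also argued correctly. The genuine gap is the case you yourself flag as the crux: $\g{x}{y}=(\g{x}{y})_\downarrow$. There you must show $\g{x}{y}\in X_{\tau\geq u}^{T_c}$, and you propose to get it from an unproven \lq\lq key lemma\rq\rq\ (no element of $X_{\tau\geq u}$ lies strictly between $\g{(\g{x}{y})}{\nega{u}}$ and $\g{x}{y}$), to be established by residuation bookkeeping ending in a contradiction with Proposition~\ref{diagonalSZIGORU}. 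That sketch does not close: from a hypothetical $d\in X_{\tau\geq u}$ with $\g{(\g{x}{y})}{\nega{u}}<d<\g{x}{y}$ one can indeed derive $\res{\te}{x}{d}=y_\downarrow$ and $\res{\te}{y}{d}=x_\downarrow$, but these merely re-express $\g{x}{y_\downarrow}=\g{x_\downarrow}{y}\leq d<\g{x}{y}$, which you already know, and diagonal strict increase only re-yields $\g{x_\downarrow}{y_\downarrow}<\g{x}{y}$; no contradiction materializes. So the delicate case is left genuinely open.

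The fix --- and the paper's actual argument --- needs no new lemma and settles the whole classification at once. By Lemma~\ref{tau_lemma}, $\g{x}{y}\in X_{\tau\geq u}$, and by claim~(\ref{csekken}) in Proposition~\ref{khkfgjkdfjkdjkddjss}, $\g{(\g{x}{y})}{\nega{u}}\neq\g{x}{y}$. Now apply the third row of (\ref{dgskjdfhsasfGAPOS}) with $v=t$ (recall $\bot_{[t]}=\nega{u}$): every element of $X_{\tau\geq u}\setminus X_{\tau\geq u}^{T}$ is fixed by multiplication with $\nega{u}$. Hence $\g{x}{y}\in X_{\tau\geq u}^{T}=X_{\tau\geq u}^{T_c}\cup X_{\tau\geq u}^{T_{ps}}$ in all cases. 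Since $X_{\tau\geq u}^{T_{ps}}\subseteq X_{\tau\geq u}^{Gap}$, an element of $X_{\tau\geq u}^{T}$ without a predecessor is automatically in $X_{\tau\geq u}^{T_c}$; writing $\g{x}{y}=\top_{[r]}$, Table~\ref{Prods1stCIKK}$_{(3,1)}$ gives $\g{(\g{x}{y})}{\nega{u}}=\g{\top_{[r]}}{\bot_{[t]}}=\bot_{[r]}$, which is precisely the right-hand alternative in the block. This membership $\g{x}{y}\in X_{\tau\geq u}^{T}$ also subsumes your first two cases, so the case distinction by the position of $(\g{x}{y})_\downarrow$ then reads off the table directly, as in the paper's proof.
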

\begin{proof}
Grey items in Table~\ref{Prods1stCIKKkiegeszitveIDEMPOTENS} are either 
inherited from Table~\ref{ProdsGeneralCase}, or
straightforward, or 
are readily seen by the commutativity of  $\te$. 
Hence it suffices to prove the items in black.
\begin{itemize}
\item
By claim~(\ref{HolTopAzBottom}) in Proposition~\ref{topbotEXISTszorzotabla}, $\bot_{[v]}\notin X_{\tau\geq u}^T$ holds, and thus
$\g{\bot_{[v]}}{\bot_{[w]}}=\g{\bot_{[v]}}{w}=\bot_{[\g{v}{w}]}$
follows from (\ref{dgskjdfhsasfGAPOS}) and Table~\ref{Prods1stCIKK}$_{(1,2)}$, respectively, thus confirming Table~\ref{Prods1stCIKKkiegeszitveIDEMPOTENS}$_{(1,1)}$.
\item
Next, $\g{\bot_{[v]}}{y_\downarrow}=\g{\bot_{[v]}}{(\g{y}{\nega{u}})}=\g{(\g{\bot_{[v]}}{\bot_{[t]}})}{y}$, which is equal to $\g{\bot_{[v]}}{y}$ by Table~\ref{Prods1stCIKKkiegeszitveIDEMPOTENS}$_{(1,1)}$, and hence the second row of (\ref{dgskjdfhsasfGAPOS}) concludes the proof of Table~\ref{Prods1stCIKKkiegeszitveIDEMPOTENS}$_{(1,5)}$.
\item
Next, $\g{\top_{[v]}}{y_\downarrow}=\g{\top_{[v]}}{(\g{y}{\nega{u}})}=\g{(\g{\top_{[v]}}{y})}{\nega{u}}=\g{(\g{\top_{[v]}}{y})}{(\g{\nega{u}}{\nega{u}})}=\g{(\g{\top_{[v]}}{\nega{u}})}{(\g{\nega{u}}{y})}$. 
By Table~\ref{Prods1stCIKK}$_{(3,1)}$ it is equal to $\g{\bot_{[v]}}{y_\downarrow}$ and by Table~\ref{Prods1stCIKKkiegeszitveIDEMPOTENS}$_{(1,5)}$ the latest is equal to $(\g{v}{y})_\downarrow$, as stated in Table~\ref{Prods1stCIKKkiegeszitveIDEMPOTENS}$_{(3,5)}$.
\item
By monotonicity of $\te$, Table~\ref{Prods1stCIKKkiegeszitveIDEMPOTENS}$_{(1,5)}$ and Table~\ref{Prods1stCIKKkiegeszitveIDEMPOTENS}$_{(3,5)}$ ensure Table~\ref{Prods1stCIKKkiegeszitveIDEMPOTENS}$_{(2,5)}$.
\item
Finally, we prove the remaining $2\times2$ entries at the bottom-right of Table~\ref{Prods1stCIKKkiegeszitveIDEMPOTENS}. 

As for Table~\ref{Prods1stCIKKkiegeszitveIDEMPOTENS}$_{(6,6)}$, 
by claim~(\ref{csekken}) in Proposition~\ref{khkfgjkdfjkdjkddjss} 
and by 
(\ref{dgskjdfhsasfGAPOS}) it follows that $\g{x}{y}$ is either in $X_{\tau\geq u}^{T_c}$ or in $X_{\tau\geq u}^{T_{ps}}$.
Clearly, $\g{x}{y}\in X_{\tau\geq u}^{T_{ps}}$ if and only if $\g{x}{y}>(\g{x}{y})_\downarrow\in X_{\tau\geq u}$, and 
$\g{x}{y}\in X_{\tau\geq u}^{T_c}$ if and only if $\g{x}{y}>(\g{x}{y})_\downarrow\in X_{\tau<u}$
or
$\g{x}{y}=(\g{x}{y})_\downarrow$ holds.
As for 
Table~\ref{Prods1stCIKKkiegeszitveIDEMPOTENS}$_{(5,5)}$,
Table~\ref{Prods1stCIKKkiegeszitveIDEMPOTENS}$_{(5,6)}$, 
and
Table~\ref{Prods1stCIKKkiegeszitveIDEMPOTENS}$_{(6,5)}$,
$\g{x_\downarrow}{y_\downarrow}=
\g{(\g{x}{\nega{u}})}{(\g{y}{\nega{u}})}=
\g{(\g{x}{y})}{(\g{\nega{u}}{\nega{u}})}=
\g{(\g{x}{y})}{\nega{u}}=
\g{(\g{x}{\nega{u}})}{y}=
\g{x}{(\g{y}{\nega{u}})}=
\g{x_\downarrow}{y}=
\g{x}{y_\downarrow}$,
which is equal to $(\g{x}{y})_\downarrow$ if $\g{x}{y}\in X_{\tau\geq u}^{T_{ps}}$, 
and is equal to $\bot_{[r]}$ if $\g{x}{y}=\top_{[r]}\in X_{\tau\geq u}^{T_c}$.

\end{itemize}
\end{proof}

\begin{lemma}\label{GammaHomo}
Let ${\mathbf X}=( X, \leq, \te, \ite{\te}, t, f )$ be an odd involutive FL$_e$-chain with residual complement $\komp$, such that there exists $u$, the smallest strictly positive idempotent and $\nega{u}$ is idempotent. 
\begin{enumerate}
\item\label{GAMMA<_Xx6}
$\mathbf Y=(\gamma(\beta(X)),\leq_\gamma,\tegamma, \ite{\gamma}, \gamma(\beta(t)),\gamma(\beta(f)))$ is an odd involutive FL$_e$-chain with involution $\kompM{\gamma}$.
The set of positive idempotent elements of $\mathbf Y$ is order isomorphic to the set of positive idempotent elements of $\mathbf X$ deprived of $t$.
\item\label{GAMMA<_X1}
Over $\gamma(\beta(X_{\tau\geq u}^E))$ there is a subgroup 
$\mathbf Z$ of $\mathbf Y$ 
and over $\gamma(\beta(X_{\tau\geq u}^{E_c}))$ there is a subgroup 
$\mathbf V$ of $\mathbf Z$ such that
\item\label{GAMMAlexikoS}
$\mathbf X \cong\PLPIs{\mathbf Y}{\mathbf Z}{\overline{\mathbf{ker}_\beta}}{\mathbf G}$ for some 
$\mathbf G\leq\mathbf V\lex{\overline{\mathbf{ker}_\beta}}$.
\end{enumerate}
\end{lemma}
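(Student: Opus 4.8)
The three claims are proved in order, with the bulk of the work in claim~(\ref{GAMMAlexikoS}). For claim~(\ref{GAMMA<_Xx6}) I would argue, exactly as for $\beta$ in Lemma~\ref{faktorok}, that $\gamma$ is a homomorphism: $\gamma$ is well-defined by Proposition~\ref{Overlapping}, so it remains to check that the right-hand sides of (\ref{gammas_rendezese})--(\ref{gammas_nega}) are independent of the chosen representatives. Well-definedness of $\leq_\gamma$ and of $\kompM{\gamma}$ is immediate from the fact that the $\gamma$-classes are order-convex and that $\komp$ permutes extremals (by (\ref{phvsfkfb}) and by claim~(\ref{Ranegalok}) in Proposition~\ref{khkfgjkdfjkdjkddjss}), while well-definedness of $\tegamma$ is read off Table~\ref{Prods1stCIKKkiegeszitveIDEMPOTENS}: every product of two class-representatives lands in one and the same $\gamma$-class. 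As a homomorphic image of the odd involutive FL$_e$-chain $\beta(\mathbf X)$, the structure $\mathbf Y$ is then an odd involutive FL$_e$-chain with involution $\kompM{\gamma}$. For the idempotent count, $\gamma\circ\beta$ collapses $[t]$ together with $\bot_{[t]}=\nega u$ and $\top_{[t]}=u$ into the unit $\gamma(\beta(t))$ of $\mathbf Y$; thus $u$ is absorbed into the unit, while Lemma~\ref{tau_lemma} shows that every other positive idempotent of $\mathbf X$ is carried to a distinct positive idempotent of $\mathbf Y$ and conversely, giving the asserted order isomorphism onto the positive idempotents of $\mathbf X$ deprived of $t$.

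For claim~(\ref{GAMMA<_X1}), by claim~(\ref{G1tau=u}) in Proposition~\ref{khkfgjkdfjkdjkddjss} every element of $X_{\tau\geq u}^{T_{ps}}$ has $\tau$-value $u$, which $\gamma\circ\beta$ sends to the unit of $\mathbf Y$; together with the component classes this places both $\gamma(\beta(X_{\tau\geq u}^{E_c}))$ and $\gamma(\beta(X_{\tau\geq u}^E))$ inside the group part $\mathbf Y_\mathbf{gr}$ by Proposition~\ref{elozetes}. Table~\ref{Prods1stCIKKkiegeszitveIDEMPOTENS} shows these two sets are each closed under $\tegamma$, $\ite{\gamma}$ and $\kompM{\gamma}$, so they carry subalgebras $\mathbf V\leq\mathbf Z\leq\mathbf Y_\mathbf{gr}$, which are groups by Theorem~\ref{csoportLesz}; moreover $\gamma$ restricts to an isomorphism $\beta(\mathbf X_\mathbf{gr})\cong\mathbf V$, since $[v]\mapsto\{[v],\{\top_{[v]}\},\{\bot_{[v]}\}\}$ is a bijection of $\beta(X_{gr})$ onto $V$.

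For claim~(\ref{GAMMAlexikoS}), let $\mathbf G$ be the image of the embedding $\mathbf X_\mathbf{gr}\hookrightarrow_\nu\beta(\mathbf X_\mathbf{gr})\lex\overline{\mathbf{ker}_\beta}$ of Lemma~\ref{faktorok}, claim~(\ref{lexikoS}), transported along $\beta(\mathbf X_\mathbf{gr})\cong\mathbf V$; then $\mathbf G\leq_\nu\mathbf V\lex\overline{\mathbf{ker}_\beta}$, and since $\overline{\mathbf{ker}_\beta}$ is cancellative the target is $\PLPIs{\mathbf Y}{\mathbf Z}{\overline{\mathbf{ker}_\beta}}{\mathbf G}$, whose universe by (\ref{EgyszerubbUnik}) is $A_G=G\cup(Z\times\{\top\})\cup(Y\times\{\bot\})$ with $Y=\gamma(\beta(X))$. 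I would define $\varphi\colon X\to A_G$ by
\[
\varphi(p)=
\left\{
\begin{array}{ll}
(\gamma(\beta(p)),g_p) & \mbox{if } p\in X_{gr}, \mbox{ where $(\beta(p),g_p)$ is the image of $p$,}\\
(\gamma(\beta(p)),\top) & \mbox{if } p\in X_{\tau\geq u}^{T},\\
(\gamma(\beta(p)),\bot) & \mbox{otherwise,}
\end{array}
\right.
\]
the first coordinate always being $\gamma(\beta(p))$. Since $X=X_{gr}\cup X_{\tau\geq u}$ and the second block splits as $X_{\tau\geq u}^{T}\cup(X_{\tau\geq u}\setminus X_{\tau\geq u}^{T})$, the three cases genuinely partition $X$; using the $\nu$-surjectivity of the Lemma~\ref{faktorok} embedding onto $\mathbf G$, the images $\top_{[v]}$, pseudo-tops, $\bot_{[v]}$, pseudo-bottoms and the non-extremal elements of $X_{\tau\geq u}$ (landing in $X_{\tau\geq u}^{G_2}$ by claim~(\ref{NoTopGap1lehetseges}) in Proposition~\ref{khkfgjkdfjkdjkddjss}) exhaust $Z\times\{\top\}$ and $Y\times\{\bot\}$, so $\varphi$ is a bijection. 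Order preservation follows from monotonicity of $\gamma\circ\beta$ and the arrangement $\bot$ below the copy of $\overline{\mathbf{ker}_\beta}$ below $\top$ inside each collapsed class. For $\te$, the first coordinate of $\varphi(\g{p}{q})$ equals $\ggamma{\gamma(\beta(p))}{\gamma(\beta(q))}$ automatically because $\gamma\circ\beta$ is a homomorphism, so only the second coordinate must be matched against the coordinatewise rule of the partial sublex product, entry by entry from Table~\ref{Prods1stCIKKkiegeszitveIDEMPOTENS}; on $X_{gr}\times X_{gr}$ this is the statement that $\varphi$ restricts to the group isomorphism onto $\mathbf G$, and the remaining entries reduce to $\top$ and $\bot$ acting as annihilators. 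Finally $\komp$ is preserved: for $p\in X_{gr}$ by the group case, for extremals by (\ref{phvsfkfb}) and claim~(\ref{Ranegalok}) in Proposition~\ref{khkfgjkdfjkdjkddjss} transcribed through (\ref{FelNeg_NegLe}), and for the remaining elements of $X_{\tau\geq u}\setminus X_{\tau\geq u}^{E}$ by the fact that $\komp$ maps this set to itself, so both sides are of the form $(\,\cdot\,,\bot)$ with first coordinate outside $Z$, matching the type~III residual-complement formula of Definition~\ref{FoKonstrukcio}; preservation of $\ite{\te}$ is then automatic from (\ref{eq_quasi_inverse}).

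The main obstacle is the $\te$-preservation step. Although the first coordinate is handled for free by the homomorphism property of $\gamma\circ\beta$, the second-coordinate bookkeeping must be checked in every case of Table~\ref{Prods1stCIKKkiegeszitveIDEMPOTENS}, and the genuinely delicate entries are the alternating ones in the lower-right block, where a product of pseudo-tops may be either a pseudo-top or a component-top $\top_{[r]}$; here correctness hinges on the first coordinate $\gamma(\beta(\g{p}{q}))$ already recording which alternative occurs, so that the $\top/\bot$ versus $\overline{\mathbf{ker}_\beta}$ choice in $A_G$ is forced to agree. A secondary subtlety is ensuring the group part $\mathbf G$, produced by the Lemma~\ref{faktorok}/Lemma~A embedding, glues consistently with the surrounding extremal $\top$/$\bot$ layer — which is exactly what the use of $\hookrightarrow_\nu$ (rather than a bare embedding) secures, by making $\varphi$ simultaneously bijective onto $G$ and compatible with the coordinatewise operations.
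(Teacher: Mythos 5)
Your proposal is correct and takes essentially the same route as the paper: the same homomorphism argument for $\gamma$ (well-definedness via Proposition~\ref{Overlapping}, preservation of $\tegamma$ read off Table~\ref{Prods1stCIKKkiegeszitveIDEMPOTENS}, of $\kompM{\gamma}$ via (\ref{phvsfkfb}) and claim~(\ref{Ranegalok}) of Proposition~\ref{khkfgjkdfjkdjkddjss}, and of the residual via (\ref{eq_quasi_inverse})), the same subgroups $\mathbf Z$ and $\mathbf V$, and the very same map $x\mapsto(\gamma(\beta(x)),\zeta(x))$ resp.\ $(\gamma(\beta(x)),\top)$ resp.\ $(\gamma(\beta(x)),\bot)$ built from the embedding $\delta$ of Lemma~\ref{faktorok}. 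The only deviations are cosmetic: you name $\mathbf G$ explicitly as the transported image of $\delta$ and argue bijectivity onto the universe $A_{\mathbf G}$ directly (the paper instead shows $\alpha$ is an embedding and then identifies its image using $\nu$-surjectivity), and you justify injectivity of $\gamma\circ\beta$ on positive idempotents by $\tau$-values rather than the paper's two-case analysis via the group structure of $\beta(\mathbf X_{\mathbf{gr}})$.
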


\begin{proof}
\begin{enumerate}
\item
Because of claim~(\ref{<_Xx6}) in Lemma~\ref{faktorok}, it suffices to verify that $\gamma$ is a homomorphism from $\beta(X)$ to $\gamma(\beta(X))$.
The definition of $\leq_\gamma$ is independent of the choice of $x$ and $y$, since the sets
$\{{[v]},\{\top_{[v]}\}, \{\bot_{[v]}\}\}$ (for $v\in X_{\tau<u}$) and $\{\{p\},\{p_\downarrow\}\}$ (for $p\in X_{\tau\geq u}^{T_{ps}}$) are convex in $\beta(X)$. The ordering $\leq_{\beta}$ is linear since so is $\leq$. Therefore, (\ref{gammas_rendezese}) is well-defined.
Preservation of the monoidal operation can readily be checked in Table~\ref{Prods1stCIKKkiegeszitveIDEMPOTENS}, and
preservation of the residual complement can readily be seen from (\ref{betas_nega}), (\ref{phvsfkfb}) and claim~(\ref{Ranegalok}) in Proposition~\ref{khkfgjkdfjkdjkddjss}.
In complete analogy to the way we proved the preservation of the implication under $\beta$ in (\ref{IgyKellEzt}), we can verify the preservation of the implication under $\gamma$. 

If $c\geq u$ is an idempotent element of $\mathbf X$ then $\gamma(\beta(c))$ is also idempotent and $\gamma(\beta(c))\geq\gamma(\beta(u))=\gamma(\beta(t))$ (thus also positive) since $\gamma\circ\beta$ is a homomorphism.
Assume $c,d\geq u$ are idempotent elements of $\mathbf X$ and $c<d$.
Then $c,d\in X_{\tau\geq u}$ holds because of claim~(\ref{ZetaOfIdempotent}) in Proposition~\ref{zeta_1}.
Therefore, 
referring to the definition of $\beta$ and $\gamma$, and claim~(\ref{EXTRnincsKOMPONENSBEN}) in Proposition~\ref{topbotEXISTszorzotabla}, $\gamma(\beta(c))=\gamma(\beta(d))$ could hold only if 
either $c=\bot_{[v]}$ and $d=\top_{[v]}$ for some $v\in X_{\tau<u}$,
or $c=d_\downarrow$ for some $d\in X_{\tau\geq u}^{T_{ps}}$.
The latter case readily leads to a contradiction, since by claim~(\ref{G1tau=u}) in Proposition~\ref{khkfgjkdfjkdjkddjss} $\tau(d)=u$, whereas claim~(\ref{ZetaOfIdempotent}) in Proposition~\ref{zeta_1} shows $\tau(d)=d>c\geq u$.
In the former case, by
Table~\ref{Prods1stCIKKkiegeszitveIDEMPOTENS}$_{(3,3)}$, 
the idempotency of $d$ implies 
$\top_{[\g{v}{v}]}=\top_{[v]}$,
that is,
$[\g{v}{v}]=[v]$,
that is,
$\beta(\g{v}{v})=\beta(v)$
that is,
$\gbeta{\beta(v)}{\beta(v)}=\beta(v)$.
Since $\mathbf X_{\tau<u}=\mathbf X_\mathbf{gr}$ holds by claim~(\ref{lexikoS}) in Lemma~\ref{faktorok}, it holds that 
$\nega{v}$ is the inverse of $v$, and hence
$\beta(\nega{v})$ is the inverse of $\beta(v)$ in $\beta(\mathbf X)$.
Therefore, 
$
\beta(v)=
\gbeta{(\gbeta{\beta(\nega{v})}{\beta(v)})}{\beta(v)}=
\gbeta{\beta(\nega{v})}{(\gbeta{\beta(v)}{\beta(v)})}=
\gbeta{\beta(\nega{v})}{\beta(v)}=
\beta(\g{\nega{v}}{v})=
\beta(t)
$
follows, that is, $[v]=[t]$.
But it contradicts to $c=\bot_{[t]}\geq u$.
These conclude the proof of the second statement.
\item To prove the first statement, by Theorem~\ref{csoportLesz}, it suffices to prove that any element of $\gamma(\beta(X_{\tau\geq u}^E))$ has an inverse in $\gamma(\beta(X_{\tau\geq u}^E))$.
For $v\in X_{\tau<u}$, 
$\ggamma{\{{[v]},\{\top_{[v]}\}, \{\bot_{[v]}\}\}}{\{{[\nega{v}]},\{\top_{[\nega{v}]}\}, \{\bot_{[\nega{v}]}\}\}}=\gamma(\gbeta{[v]}{[\nega{v}]})=\gamma(\beta(\g{v}{\nega{v}}))$ $=\gamma(\beta(t))$,
and for $p\in X_{\tau\geq u}^{T_{ps}}$, 
$$
\ggamma{\{\{p\},\{p_\downarrow\}\}}{\{\{\nega{p}\},\{\nega{p}_\downarrow\}\}}=
\gamma(\gbeta{\{p\}}{\{\nega{p}\}\}})=
\gamma(\beta(\g{p}{\nega{p}}))=
$$
$$
\gamma(\beta(\nega{(\res{\te}{p}{p})}))=
\gamma(\beta(\nega{\tau(p)})).
$$
By claim~(\ref{G1tau=u}) in Proposition~\ref{khkfgjkdfjkdjkddjss}, it is equal to 
$
\gamma(\beta(\nega{u}))=
\gamma(\beta(t))
$.
The second statement is straightforward since $X_{\tau\geq u}^{E_c} \subseteq X_{\tau\geq u}^E$, and $\gamma\circ\beta$ is a homomorphism.
\item
Referring to Lemma~6.2, let $\delta=(\beta|_{ X_{gr}},\zeta)$ denote the embedding 
$\mathbf X_\mathbf {gr} \hookrightarrow_\nu \beta(\mathbf X_{\mathbf{gr}})\lex\overline{\mathbf{ker}_\beta}$.
Define a mapping $\alpha \ : \ X \to \PLPIII{Y}{Z}{V}{\overline{ker_\beta}}$ by
$$
\alpha(x)=\left\{
\begin{array}{ll}
(\gamma(\beta(x)),\zeta(x)) 	& \mbox{if $x\in X_{\tau<u}$}\\
(\gamma(\beta(x)),\top)		& \mbox{if $x\in X_{\tau\geq u}^T$}\\
(\gamma(\beta(x)),\bot)		& \mbox{if $x\in X_{\tau\geq u}\setminus X_{\tau\geq u}^T$}\\
\end{array}
\right. ,
$$
or in a more detailed form,
\begin{equation}\label{alfaDETAIL}
\alpha(x)=\left\{
\begin{array}{ll}
(\gamma(\beta(x)),\bot)		& \mbox{if $x\in X_{\tau\geq u}^{B_c}$}\\
(\gamma(\beta(x)),\zeta(x)) 	& \mbox{if $x\in X_{\tau<u}$}\\
(\gamma(\beta(x)),\top)		& \mbox{if $x\in X_{\tau\geq u}^{T_c}$}\\
(\gamma(\beta(x)),\bot)		& \mbox{if $x\in X_{\tau\geq u}^{B_{ps}}$}\\
(\gamma(\beta(x)),\top)		& \mbox{if $x\in X_{\tau\geq u}^{T_{ps}}$}\\
(\gamma(\beta(x)),\bot)		& \mbox{if $x\in X_{\tau\geq u}\setminus X_{\tau\geq u}^E$}\\
\end{array}
\right. .
\end{equation}
Then $\alpha$ is an embedding of $\mathbf X$ to $\PLPIII{\mathbf Y}{\mathbf Z}{\mathbf V}{\overline{\mathbf{ker}_\beta}}$.
Indeed, 
intuitively, what $\gamma\circ\beta$ does is that it makes each component together with its extremals in $X$ collapse into a singleton, 
and also makes each coherent pair of pseudo extremals, that is, a gap $\{x_\downarrow,x\}$ ($x_\downarrow<x\in X_{\tau\geq u}^{T_{ps}}$) collapse into a singleton.
On the other hand, the construction $\PLPIII{\mathbf Y}{\mathbf Z}{\mathbf V}{\overline{\mathbf{ker}_\beta}}$ does almost the opposite, namely, it replaces any image of such a component by the {\em divisible hull of the} component equipped with a top and a bottom (extremals), and it replaces each image of a pair of coherent pseudo extremals by a top and a bottom (pseudo extremals), just like $\alpha$ does in (9.6); see the related definitions, in claim~(2) and Definitions~6.1, 9.1, and 4.2/A.
Thus, $\alpha$ is an order-embedding from $X$ to $\PLPIII{Y}{Z}{V}{\overline{{ker}_\beta}}$, the universe of $\PLPIII{\mathbf Y}{\mathbf Z}{\mathbf V}{\overline{\mathbf{ker}_\beta}}$.
A straightforward verification using 
Definition~4.2/A and Table~3 shows the preservation of the monoidal operation under $\alpha$.
Moreover, (6.4), (7.3) and claim~(4) in Proposition~8.2 shows the preservation of the residual complement, under $\alpha$.
In complete analogy to the way we proved the preservation of the implication under $\beta$ in (6.5), we can verify the preservation of the implication under $\alpha$, too. 
Finally, it is clear that $\alpha$ maps the unit element of $\mathbf X$ to the unit element of $\PLPIII{\mathbf Y}{\mathbf Z}{\mathbf V}{\overline{\mathbf{ker}_\beta}}$.

If any element in $\PLPIII{\mathbf Y}{\mathbf Z}{\mathbf V}{\overline{\mathbf{ker}_\beta}}$ 
is not invertible then it is the $\alpha$-image of a suitable chosen $x\in X$, as it is easy to see from the shorter definition of $\alpha$ (before (\ref{alfaDETAIL})).
Adding that the projection of $\delta(\mathbf X_\mathbf {gr})$ to the first coordinate  
inside 
$\beta(\mathbf X_{\mathbf{gr}})\lex\overline{\mathbf{ker}_\beta}$ is onto, 
by the definition in (\ref{alfaDETAIL}), 
it yields that $\alpha$ is an embedding of
$\mathbf X$ into 
$\PLPIII{\mathbf Y}{\mathbf Z}{\mathbf V}{\overline{\mathbf{ker}_\beta}}$ 
hence into
$\PLPI{\mathbf Y}{\mathbf Z}{\overline{\mathbf{ker}_\beta}}$.
Summing up, 
$\mathbf X \cong\PLPIs{\mathbf Y}{\mathbf Z}{\overline{\mathbf{ker}_\beta}}{\mathbf G}$ for some 
$\mathbf G\leq\mathbf V\lex{\overline{\mathbf{ker}_\beta}}$.
\end{enumerate}
\end{proof}

\section{One-step decomposition -- when $\nega{u}$ is not idempotent}\label{OneStepNotIdemp}


In order to handle the case when $\nega{u}$ is not idempotent, we shall consider a residuated subsemigroup $\mathbf X_{\tau\geq u}$ of $\mathbf X$, which is an odd involutive FL$_e$-algebra, albeit not a subalgebra of $\mathbf X$, since its unit element $u$ and residual complement $\kompM{u}$ will differ from 
those 
of $\mathbf X$.
Then we prove that the group-part of $\mathbf X_{\tau\geq u}$ (namely ${\mathbf X_{\tau\geq u}^T}$) is discretely embedded into $\mathbf X_{\tau\geq u}$, that ${\mathbf X_{\tau\geq u}^{T_c}}\leq{\mathbf X_{\tau\geq u}^T}$, and finally we will recover $\mathbf X$, up to isomorphism, as a type II partial sublex product of $\mathbf X_{\tau\geq u}$, ${\mathbf X_{\tau\geq u}^T}$, $\mathbf{ker}_\beta$ and a suitable chosen 
$
\mathbf G\leq
\mathbf X_{\tau\geq u}^{T_c}\lex{\overline{\mathbf{ker}_\beta}}
$.

\begin{proposition}\label{Kalap2}
Let ${\mathbf X}=( X, \leq, \te, \ite{\te}, t, f )$ be an odd involutive FL$_e$-chain with residual complement $\komp$, such that there exists $u$, the smallest strictly positive idempotent. 
Assume $\nega{u}$ is not idempotent.
\begin{enumerate}
\item\label{slsdkfjkkjjjj}
The following product table holds for $\te$ (in the bottom-right entry 
the formula on the left applies if $\g{x}{y}>(\g{x}{y})_\downarrow\in X_{\tau\geq u}$, whereas the formula on the right applies if
$\g{x}{y}>(\g{x}{y})_\downarrow\in X_{\tau<u}$
or
$\g{x}{y}=(\g{x}{y})_\downarrow$.
{\footnotesize
\begin{table}[ht]
\centering
\caption{{(When $\nega{u}$ is not idempotent):} If $v,w\in X_{\tau<u}$, and $x,y\in X_{\tau\geq u}^{T_{ps}}$ then it holds true that}
\begin{tabular}{cccc}
$\te$ & \vline & $\top_{[w]}$ & $y$ \\
\hline
$\top_{[v]}$ & \vline & {\color{midgrey}$\top_{[\g{v}{w}]}$}  & {\color{midgrey}$\g{v}{y}\in X_{\tau\geq u}^{T_{ps}}$}\\
$x$ & \vline &   ${\color{midgrey}\g{x}{w}}\in X_{\tau\geq u}^{T_{ps}}$ &$\in X_{\tau\geq u}^{T_{ps}}$ $|$ $\in X_{\tau\geq u}^{T_c}$\\
\label{Prods1stCIKKu'u'NEMu'S}
\end{tabular}
\end{table}
}

\item\label{hldjskfjgjhdd2}
$X_{\tau\geq u}^T$ is closed under $\komp$.

\item\label{bottomosLIKE2}
$X_{\tau\geq u}^T$ is discretely embedded into $X_{\tau\geq u}$.

\end{enumerate}
\end{proposition}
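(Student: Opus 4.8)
The statement has three parts, which I would prove in the listed order, since (\ref{hldjskfjgjhdd2}) rests on (\ref{slsdkfjkkjjjj}) and (\ref{bottomosLIKE2}) rests on both. For part~(\ref{slsdkfjkkjjjj}), every grey entry of the table is either straightforward, follows by commutativity of $\te$, or is inherited verbatim from Table~\ref{ProdsGeneralCase} of Proposition~\ref{khkfgjkdfjkdjkddjss}; so the only genuinely new item is $\g{x}{y}$ for $x,y\in X_{\tau\geq u}^{T_{ps}}$. I would treat it exactly as the bottom-right corner of Table~\ref{Prods1stCIKKkiegeszitveIDEMPOTENS} in Proposition~\ref{Kalap}: by Lemma~\ref{tau_lemma} one has $\g{x}{y}\in X_{\tau\geq u}$, by claim~(\ref{csekken}) one has $\g{(\g{x}{y})}{\nega{u}}\neq\g{x}{y}$, hence $\g{x}{y}\notin X_{\tau\geq u}^{G_2}$, and then claims~(\ref{dfbskdbfhfb}) and~(\ref{skfhshsjkshfjj12}) force $\g{x}{y}$ into $X_{\tau\geq u}^{T_c}$ precisely when its lower cover falls in $X_{\tau<u}$ (or is absent), and into $X_{\tau\geq u}^{T_{ps}}$ when it falls in $X_{\tau\geq u}$ — the stated dichotomy.

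For part~(\ref{hldjskfjgjhdd2}), the crux, note first that $\komp$ is an order-reversing involution preserving $X_{\tau\geq u}$ (as $\tau(\nega{x})=\tau(x)$) which, by (\ref{phvsfkfb}), (\ref{FelNeg_NegLe}) and claim~(\ref{Ranegalok}), carries component-tops to component-bottoms and pseudo-tops to pseudo-bottoms; thus $\nega{X_{\tau\geq u}^T}=X_{\tau\geq u}^B$, and (\ref{hldjskfjgjhdd2}) is equivalent to $X_{\tau\geq u}^B\subseteq X_{\tau\geq u}^T$. Now $\bot_{[v]}=\g{v}{\nega{u}}$ with $v$ invertible, so once we know $\nega{u}=\bot_{[t]}\in X_{\tau\geq u}^T$, the rows $\bot_{[v]}\te\top_{[w]}$ and $v\te y$ of the tables (and part~(\ref{slsdkfjkkjjjj}) for pseudo-bottoms) propagate membership in $X_{\tau\geq u}^T$ to every bottom- and pseudo-bottom-extremal; hence the whole part reduces to $\nega{u}\in X_{\tau\geq u}^T$. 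Since $\nega{u}$ is \emph{not} idempotent, $\g{\nega{u}}{\nega{u}}<\nega{u}$. If some group element $c$ satisfies $\g{\nega{u}}{\nega{u}}<c<\nega{u}$, then residuation yields $\g{\nega{u}}{\nega{c}}\in\{\nega{u},u\}$: the value $\nega{u}$ is impossible, as it would place $\nega{c}$ in the stabilizer of $\nega{u}$ and force $\tau(\nega{u})\geq\nega{c}>u$ against claim~(\ref{EXTRnincsKOMPONENSBEN}); hence $\g{\nega{u}}{\nega{c}}=u$, i.e. $\top_{[c]}=\nega{u}\in X_{\tau\geq u}^{T_c}$. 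If no such group element exists, I would show $\g{\nega{u}}{\nega{u}}=(\nega{u})_\downarrow$, placing $\nega{u}$ in $X_{\tau\geq u}^{T_{ps}}$.

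\textbf{This last step is the main obstacle}: one must rule out that $\nega{u}$ is a two-sided dense limit inside $X_{\tau\geq u}$, that is, neither a component-supremum nor the upper end of a gap. The decisive computation is that any $d\in X_{\tau\geq u}$ with $\g{\nega{u}}{\nega{u}}<d<\nega{u}$ would satisfy, by (\ref{eq_quasi_inverse}) and residuation, $\res{\te}{\nega{u}}{d}=\nega{u}$, equivalently $\g{\nega{u}}{\nega{d}}=u$; it is exactly the failure of idempotency of $\nega{u}$ that excludes this configuration, and pinning down that exclusion cleanly is the only non-routine point of the whole proposition.

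For part~(\ref{bottomosLIKE2}), granting (\ref{hldjskfjgjhdd2}) the argument is short. By Definition~\ref{sokmindenki} the predecessor of $x\in X_{\tau\geq u}^T$ inside $X_{\tau\geq u}$ is $x_\Downarrow=\bot_{[v]}$ when $x=\top_{[v]}\in X_{\tau\geq u}^{T_c}$ and $x_\Downarrow=x_\downarrow=\g{x}{\nega{u}}$ when $x\in X_{\tau\geq u}^{T_{ps}}$; in both cases $x_\Downarrow<x$ (by strict monotonicity of multiplication by the invertible $v$, resp. by the gap property defining $X_{\tau\geq u}^{T_{ps}}$). Moreover $x_\Downarrow\in X_{\tau\geq u}^T$: in the first case $\bot_{[v]}=\nega{(\top_{[\nega{v}]})}\in X_{\tau\geq u}^T$ by (\ref{phvsfkfb}) and part~(\ref{hldjskfjgjhdd2}); in the second $\nega{(x_\downarrow)}\in X_{\tau\geq u}^{T_{ps}}$ by claim~(\ref{Ranegalok}), whence $x_\downarrow=\nega{(\nega{(x_\downarrow)})}\in X_{\tau\geq u}^T$ again by part~(\ref{hldjskfjgjhdd2}). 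Finally, since by part~(\ref{hldjskfjgjhdd2}) the order-reversing involution $\komp$ maps $X_{\tau\geq u}^T$ onto itself, the successor is $x_\Uparrow=\nega{((\nega{x})_\Downarrow)}$, so $x<x_\Uparrow\in X_{\tau\geq u}^T$ as well. Thus every $x\in X_{\tau\geq u}^T$ has proper covers on both sides within $X_{\tau\geq u}$, both lying in $X_{\tau\geq u}^T$, which is precisely discrete embeddedness.
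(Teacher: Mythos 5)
Parts~(\ref{slsdkfjkkjjjj}) and~(\ref{bottomosLIKE2}) of your proposal are in order: your handling of the bottom-right entry is the same as the paper's, your observation that the membership $\g{x}{\top_{[w]}}\in X_{\tau\geq u}^{T_{ps}}$ already follows by commutativity from Table~\ref{ProdsGeneralCase}$_{(3,6)}$ together with Table~\ref{ProdsGeneralCase}$_{(2,6)}$ is legitimate (the paper instead reproves this entry from scratch), and your argument for discrete embeddedness is a correct minor variant of the paper's, \emph{granted} part~(\ref{hldjskfjgjhdd2}).

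The genuine gap is in part~(\ref{hldjskfjgjhdd2}), exactly where you flag ``the main obstacle'': you never prove $\nega{u}\in X_{\tau\geq u}^T$. Your reduction to that statement (via $\nega{(X_{\tau\geq u}^T)}=X_{\tau\geq u}^B$ and propagation through the product tables) is sound, but the two-case argument offered afterwards does not close it. In the first case the assertion that residuation yields $\g{\nega{u}}{\nega{c}}\in\{\nega{u},u\}$ is unjustified as written (the value $\nega{u}$ is excluded trivially, since $c<\nega{u}$ and invertibility of $\nega{c}$ force $\g{\nega{u}}{\nega{c}}>t$; what adjointness actually gives is $\g{\nega{u}}{\nega{c}}\leq u$ from $c\geq\g{\nega{u}}{\nega{u}}$, which combined with $\geq u$ yields $=u$). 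In the second case you explicitly leave open the exclusion of elements $d\in X_{\tau\geq u}$ with $\g{\nega{u}}{\nega{u}}<d<\nega{u}$; worse, the natural way to exclude such $d$ is to already know $\nega{u}\in X_{\tau\geq u}^T$, so your plan risks circularity. The irony is that the missing step is immediate from a fact you yourself invoke in part~(\ref{slsdkfjkkjjjj}): by the third row of (\ref{dgskjdfhsasfGAPOS}) (claim~(\ref{skfhshsjkshfjj12}) of Proposition~\ref{khkfgjkdfjkdjkddjss}) applied with $v=t$, every $z\in X_{\tau\geq u}\setminus X_{\tau\geq u}^T$ satisfies $\g{z}{\nega{u}}=\g{z}{\bot_{[t]}}=z$; since $\nega{u}\in X_{\tau\geq u}$ and $\g{\nega{u}}{\nega{u}}\neq\nega{u}$ by hypothesis, this instantly forces $\nega{u}\in X_{\tau\geq u}^T$. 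Note also that the paper's own proof of part~(\ref{hldjskfjgjhdd2}) avoids your reduction altogether: it first proves the auxiliary claim that $\g{x}{[a,b]}=y$ with $a<b$ implies $\g{\nega{y}}{]a,b]}=\nega{x}$ (a consequence of Proposition~\ref{diagonalSZIGORU}), and then, for an arbitrary $x\in X_{\tau\geq u}^T$ with $\nega{x}\notin X_{\tau\geq u}^T$, applies it to the interval $[\g{\nega{u}}{\nega{u}},\nega{u}]$ --- nondegenerate precisely because $\nega{u}$ is not idempotent --- to conclude $\g{x}{\nega{u}}=x$, contradicting (\ref{dgskjdfhsasfGAPOS}); that argument is uniform in $x$ and needs neither the identification of where $\nega{u}$ sits nor any propagation.
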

\begin{proof}
\begin{enumerate}
\item[(\ref{slsdkfjkkjjjj})]
Grey items in Table~\ref{Prods1stCIKKu'u'NEMu'S} are either readily seen by symmetry of $\te$
or are inherited from Table~\ref{ProdsGeneralCase}, so it suffices to prove the items in black.
\begin{itemize}
\item
By Lemma~\ref{tau_lemma}, $\g{x}{y}\in X_{\tau\geq u}$ holds.
Since by claim~(\ref{csekken}) in Proposition~\ref{khkfgjkdfjkdjkddjss}, $\g{(\g{x}{y})}{\nega{u}}\neq\g{x}{y}$ holds, it follows from (\ref{dgskjdfhsasfGAPOS}) that either $\g{x}{y}\in X_{\tau\geq u}^{T_c}$ or $\g{x}{y}\in X_{\tau\geq u}^{T_{ps}}$.
Clearly, the former holds if $\g{x}{y}>(\g{x}{y})_\downarrow\in X_{\tau<u}$.
If $\g{x}{y}=(\g{x}{y})_\downarrow$ then $\g{x}{y}\in X_{\tau\geq u}^{T_{ps}}$ cannot hold, thus $\g{x}{y}\in X_{\tau\geq u}^{T_c}$ holds.
Finally, if $\g{x}{y}>(\g{x}{y})_\downarrow\in X_{\tau\geq u}$ then $\g{x}{y}\in X_{\tau\geq u}^{T_c}$ cannot hold, thus $\g{x}{y}\in X_{\tau\geq u}^{T_{ps}}$ holds.
This proves Table~\ref{Prods1stCIKKu'u'NEMu'S}$_{(2,2)}$.
\item
To prove Table~\ref{Prods1stCIKKu'u'NEMu'S}$_{(2,1)}$, that is, $\g{x}{\top_{[w]}}\in X_{\tau\geq u}^{T_{ps}}$ we proceed as follows.

First we show that $\g{x}{\top_{[w]}}$ cannot be in $X_{\tau\geq u}^{T_c}$.
Indeed, if $\g{x}{\top_{[w]}}=\top_{[v]}$ then
by Table~\ref{Prods1stCIKK}$_{(3,4)}$ and Table~\ref{Prods1stCIKK}$_{(3,1)}$, 
$\g{x_\downarrow}{w}=\g{(\g{x}{\nega{u}})}{w}=\g{(\g{x}{w})}{\nega{u}}=\g{(\g{x}{\top_{[w]}})}{\nega{u}}=\g{\top_{[v]}}{\nega{u}}=\bot_{[v]}$.
Multiplying both sides by $\nega{w}$ we obtain, using Table~\ref{Prods1stCIKK}$_{(2,1)}$, $x_\downarrow=\g{\nega{w}}{\bot_{[v]}}=\bot_{[\g{\nega{w}}{v}]}$, a contradiction to claim~(\ref{NoTopGap1lehetseges}) in Proposition~\ref{khkfgjkdfjkdjkddjss}.

Next, we show $(\g{x}{\top_{[w]}})_\downarrow<\g{x}{\top_{[w]}}$.
Assume the opposite, that is, $(\g{x}{\top_{[w]}})_\downarrow=\g{x}{\top_{[w]}}$. Since $\g{x}{\top_{[w]}}$ is not in $X_{\tau\geq u}^{T_c}$, by Table~\ref{Prods1stCIKK}$_{(4,1)}$, $\g{(\g{x}{\top_{[w]}})}{\nega{u}}=\g{x}{\top_{[w]}}$ follows.
But it leads to $\g{x_\downarrow}{\bot_{[w]}}=\g{(\g{x}{\nega{u}})}{(\g{\top_{[w]}}{\nega{u}})}
=\g{(\g{(\g{x}{\top_{[w]}})}{\nega{u}})}{\nega{u}}=\g{x}{\top_{[w]}}$, a contradiction to Proposition~\ref{diagonalSZIGORU}.

Note that we have just proved $\g{(\g{x}{\top_{[w]}})}{\nega{u}}\neq\g{x}{\top_{[w]}}$, too.
Hence, using that $\g{x}{\top_{[w]}}\in X_{\tau\geq u}$ holds by Lemma~\ref{tau_lemma}, by (\ref{dgskjdfhsasfGAPOS}),  $\g{x}{\top_{[w]}}\in X_{\tau\geq u}^{T_{ps}}$ must hold, since the other two lines of (\ref{dgskjdfhsasfGAPOS}) cannot apply.
\end{itemize}


\item[(\ref{hldjskfjgjhdd2})]
{\em Claim:} For any odd involutive FL$_e$-chain $(X, \leq, \te, \ite{\te}, t, f )$, $x,y,a,b\in X$ and $a<b$,
if $\g{x}{[a,b]}=y$ \footnote{We mean $\g{x}{z}=y$ for all $z\in[a,b]$.} then $\g{\nega{y}}{]a,b]}=\nega{x}$.
Indeed, let $u\in]a,b]$.
By (\ref{eq_quasi_inverse}) it suffices to prove $\res{\te}{u}{y}=x$.
Now, $\g{x}{u}=y$ holds, and if, by contradiction, for $x_1>x$, $\g{x_1}{u}=y$ would hold, than together with $\g{x}{a}=y$ it would contradict to Proposition~\ref{diagonalSZIGORU}. This settles the claim.

Let $x\in X_{\tau\geq u}^T$. Then $\nega{x}\in X_{\tau\geq u}$ by Lemma~\ref{tau_lemma}.
If $\nega{x}\notin X_{\tau\geq u}^T$ then by (\ref{dgskjdfhsasfGAPOS}), $\g{\nega{x}}{\nega{u}}=\nega{x}$ would hold.
It would imply $\nega{x}=\g{(\g{\nega{x}}{\nega{u}})}{\nega{u}}=\g{\nega{x}}{(\g{\nega{u}}{\nega{u}})}=\g{\nega{x}}{(\nega{u}_\downarrow)}$, that is,
$\g{\nega{x}}{[\nega{u}_\downarrow,\nega{u}]}=\nega{x}$,
which, by the claim would yield $\g{x}{\nega{u}}=x$, contradicting to $x\in X_{\tau\geq u}^T$ by (\ref{dgskjdfhsasfGAPOS}). 

\item[(\ref{bottomosLIKE2})]
We prove that 
$X_{\tau\geq u}^T$ is closed under $_\Downarrow$ and $_\Uparrow$, and neither $_\Downarrow$ nor $_\Uparrow$ have fixed point in $X_{\tau\geq u}^T$.
By (\ref{phvsfkfb}), for $w\in X_{\tau<u}$, $(\top_{[w]})_\Downarrow=\bot_{[w]}=\nega{(\top_{[\nega{w}]})}$ holds by (\ref{phvsfkfb}), and
by claim~(\ref{hldjskfjgjhdd2}) it is in $X_{\tau\geq u}^T$.
Therefore, it remains to prove 
$x_\downarrow\in X_{\tau\geq u}^T$ if $x\in X_{\tau\geq u}^{T_{ps}}$.
The opposite, that is, $x_\downarrow\in X_{\tau\geq u}\setminus X_{\tau\geq u}^T$
would imply 
$\g{x_\downarrow}{\nega{u}}=x_\downarrow$ by  (\ref{dgskjdfhsasfGAPOS}), and it 
would lead to
$\g{x}{\nega{u}}=x_\downarrow=\g{x_\downarrow}{\nega{u}}=\g{(\g{x_\downarrow}{\nega{u}})}{\nega{u}}=
\g{x_\downarrow}{(\g{\nega{u}}{\nega{u}})}$, a contradiction to Proposition~\ref{diagonalSZIGORU}.

It is straightforward that $_\Downarrow$ has no fixed point in $X_{\tau\geq u}^T$.
The statements on $_\Uparrow$ follow from (\ref{FelNeg_NegLe}).

\end{enumerate}
\end{proof}

\begin{lemma}\label{decompXtaugeq}
Let ${\mathbf X}=( X, \leq, \te, \ite{\te}, t, f )$ be an odd involutive FL$_e$-chain with residual complement $\komp$, such that there exists $u$, the smallest strictly positive idempotent, and $\nega{u}$ is not idempotent.
\begin{enumerate}
\item\label{GAPOSqqJOzartraegeszmuvelet}
$\mathbf X_{\tau\geq u}=(X_{\tau\geq u}, \leq, \te, \ite{\te}, u, u )$ is an odd involutive FL$_e$-chain with residual complement 
\begin{equation}\label{circesKOMP}
\kompM{u} : \ x\mapsto
\left\{
\begin{array}{ll}
\nega{x} &\mbox{if $x\in X_{\tau\geq u}\setminus X_{\tau\geq u}^T$}\\
\nega{(x_\Downarrow)}=\nega{x}_\Uparrow & \mbox{if $x\in X_{\tau\geq u}^T$}
\end{array}
\right. .
\end{equation}
The set of positive idempotent elements of $\mathbf X_{\tau\geq u}$ equals the set of positive idempotent elements of $\mathbf X$ deprived of $t$.
\item\label{PriMe}
$\mathbf X_{\tau\geq u}^T=(X_{\tau\geq u}^T, \leq, \te, \ite{\te}, u, u )$ is the group part of $\mathbf X_{\tau\geq u}$, and \\
${\mathbf X_{\tau\geq u}^{T_c}=(X_{\tau\geq u}^{T_c}}, \leq, \te, \ite{\te}, u, u )$ is a subalgebra of ${\mathbf X_{\tau\geq u}^T}$.
\item\label{AzIgazsagPillanata}
$\mathbf X \cong\PLPIIs{\left(\mathbf X_{\tau\geq u}\right)}{\mathbf X_{\tau\geq u}^T}{\overline{\mathbf{ker}_\beta}}{\mathbf G}$
for some
$$
\mathbf G\leq
\mathbf X_{\tau\geq u}^{T_c}\lex{\overline{\mathbf{ker}_\beta}}
.
$$

\end{enumerate}
\end{lemma}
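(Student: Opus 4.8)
The plan is to establish the three claims in turn, leaning on the structural data for $X_{\tau\geq u}$ collected in Propositions~\ref{topbotEXISTszorzotabla}--\ref{Kalap2}, and then to reconstruct $\mathbf X$ by an isomorphism $\alpha$ exactly parallel to the one built in Lemma~\ref{GammaHomo}, but now landing in a \emph{type II} (rather than type I) partial sublex product. For claim~(\ref{GAPOSqqJOzartraegeszmuvelet}): by Lemma~\ref{tau_lemma} the set $X_{\tau\geq u}$ is closed under $\te$ and $\ite{\te}$, and $u$ is the unit there because $t\le u\le\tau(x)$ forces $\g{u}{x}=x$; residuation is inherited from $\mathbf X$ since the global residuum stays in $X_{\tau\geq u}$. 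With new constant $f:=u$ the complement is $\res{\te}{x}{u}=\nega{(\g{x}{\nega{u}})}$ by (\ref{eq_quasi_inverse}); reading the three lines of (\ref{dgskjdfhsasfGAPOS}) at $v=t$ (so $\bot_{[t]}=\nega{u}$) gives $\nega{x}$ off $X_{\tau\geq u}^T$ and $\nega{(x_\Downarrow)}$ on $X_{\tau\geq u}^T$, i.e.\ exactly (\ref{circesKOMP}), while $\nega{(x_\Downarrow)}=\nega{x}_\Uparrow$ is the relativization of (\ref{FelNeg_NegLe}) to $X_{\tau\geq u}$. Involutivity $\negaM{u}{(\negaM{u}{x})}=x$ is then checked by cases, using that $\komp$ fixes $X_{\tau\geq u}^T$ setwise (claim~(\ref{hldjskfjgjhdd2})) and that $X_{\tau\geq u}^T$ is discretely embedded (claim~(\ref{bottomosLIKE2})), whence $(\nega{x}_\Uparrow)_\Downarrow=\nega{x}$; oddness is immediate as $t=f=u$, and the idempotent count follows from claim~(\ref{ZetaOfIdempotent}) in Proposition~\ref{zeta_1}.

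For claim~(\ref{PriMe}), Theorem~\ref{csoportLesz} identifies the group part of $\mathbf X_{\tau\geq u}$ with $\{x\in X_{\tau\geq u}:\tau(x)=u\}$ ($\tau$ being unchanged by closure under $\ite{\te}$, and the new unit being $u$). Claims~(\ref{EXTRnincsKOMPONENSBEN}) and (\ref{G1tau=u}) put $X_{\tau\geq u}^T$ inside this set, while for $x\in X_{\tau\geq u}\setminus X_{\tau\geq u}^T$ the third line of (\ref{dgskjdfhsasfGAPOS}) gives $\g{x}{\nega{x}}=\nega{\tau(x)}=\nega{u}\ne u$, so $x$ is not invertible; thus the group part is precisely $X_{\tau\geq u}^T$. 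Closure of $X_{\tau\geq u}^{T_c}$ under the group operations (Table~\ref{Prods1stCIKK} for products, $u=\top_{[t]}\in X_{\tau\geq u}^{T_c}$ for the unit, and $\negaM{u}{\top_{[v]}}=\nega{\bot_{[v]}}=\top_{[\nega{v}]}$ via (\ref{phvsfkfb}) for complements) makes it a subalgebra. For claim~(\ref{AzIgazsagPillanata}) I would put $\mathbf Y:=\overline{\mathbf{ker}_\beta}$ (a group, so $\PLPIIs{}{}{}{}$ applies) and $\mathbf V:=\mathbf X_{\tau\geq u}^{T_c}$, and define $\alpha\colon X\to\PLPIV{\mathbf X_{\tau\geq u}}{\mathbf V}{\overline{\mathbf{ker}_\beta}}$ by $\alpha(x)=(\top_{[x]},\zeta(x))$ for $x\in X_{\tau<u}$ and $\alpha(x)=(x,\top)$ for $x\in X_{\tau\geq u}$, where $\zeta$ is the second-coordinate map of the embedding $\mathbf X_\mathbf{gr}\hookrightarrow_\nu\beta(\mathbf X_\mathbf{gr})\lex\overline{\mathbf{ker}_\beta}$ from claim~(\ref{lexikoS}) in Lemma~\ref{faktorok}, composed with the group isomorphism $[v]\mapsto\top_{[v]}$ of $\beta(\mathbf X_\mathbf{gr})$ onto $\mathbf X_{\tau\geq u}^{T_c}$ furnished by Table~\ref{Prods1stCIKK}.

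Intuitively $\alpha$ re-inflates each collapsed component $[v]$ into the fibre $\{\top_{[v]}\}\times\overline{\mathbf{ker}_\beta}$, slotted between $\bot_{[v]}=(\top_{[v]})_\Downarrow$ and $\top_{[v]}=(\top_{[v]},\top)$, exactly as the type IV construction prescribes, while every pseudo-extremal and non-extremal of $X_{\tau\geq u}$ stays a $\top$-fibre. I would then verify that $\alpha$ is an order isomorphism onto $H\cup(X_{\tau\geq u}\times\{\top\})$ (convexity of components locates $\top_{[x]}$ against elements of $X_{\tau\geq u}$), that it preserves $\te$ — the decisive identities being $\top_{[v]}=\g{v}{u}$ from (\ref{slhhsdjksjkgb}) and $\g{\top_{[v]}}{y}=\g{v}{y}$ for $y\in X_{\tau\geq u}$ (Table~\ref{Prods1stCIKK}), together with the fact that $\zeta$ is a group homomorphism — and that it preserves $\komp$ by matching the three lines of (\ref{FuraNegaEXT}) against (\ref{circesKOMP}); preservation of $\ite{\te}$ then follows from (\ref{eq_quasi_inverse}) just as in (\ref{IgyKellEzt}). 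Setting $\mathbf G:=\alpha(\mathbf X_\mathbf{gr})$ yields $\mathbf G\leq_\nu\mathbf X_{\tau\geq u}^{T_c}\lex\overline{\mathbf{ker}_\beta}$, and since every non-invertible element of the product is an $\alpha$-image, $\alpha$ realizes the claimed isomorphism $\mathbf X\cong\PLPIIs{\left(\mathbf X_{\tau\geq u}\right)}{\mathbf X_{\tau\geq u}^T}{\overline{\mathbf{ker}_\beta}}{\mathbf G}$ onto the sublex subalgebra of Definition~B.

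The main obstacle, I expect, is the exact matching of the \emph{twisted} complement (\ref{circesKOMP}) of $\mathbf X_{\tau\geq u}$ against the predecessor-shifted middle clause $((\negaM{\ast}{x})_\downarrow,\top)$ of the type IV complement (\ref{FuraNegaEXT}): on the group part $\kompM{u}$ already performs the downward shift $x\mapsto\nega{(x_\Downarrow)}$ that the construction re-performs, and one must check that the two evaluate to the same element. This is precisely where the discrete embedding of $X_{\tau\geq u}^T$ (claim~(\ref{bottomosLIKE2})) and the symmetry $\nega{x}_\Uparrow=\nega{(x_\Downarrow)}$ are indispensable; the same structural facts are what make the involution in claim~(\ref{GAPOSqqJOzartraegeszmuvelet}) work, so I would expect that verification to be the other delicate point.
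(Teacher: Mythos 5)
Your proposal is correct and follows essentially the same route as the paper's own proof: the same three-stage structure, the same derivation of the twisted complement $\kompM{u}$ from (\ref{eq_quasi_inverse}) and (\ref{dgskjdfhsasfGAPOS}) together with Proposition~\ref{Kalap2}, and the same embedding $\alpha(x)=(\top_{[x]},\zeta(x))$ for $x\in X_{\tau<u}$, $\alpha(x)=(x,\top)$ for $x\in X_{\tau\geq u}$, with $\mathbf G=\alpha(\mathbf X_\mathbf{gr})$. Your only deviations are cosmetic: in claim~(\ref{PriMe}) you obtain invertibility of the top-extremals via the $\tau$-characterization of the group part instead of computing $\g{x}{\negaM{u}{x}}=u$ directly (the paper's route), and your identity $\nega{\tau(x)}=\nega{u}$ should read $\nega{\tau(x)}\leq\nega{u}<u$, which still gives the needed $\g{x}{\negaM{u}{x}}\neq u$.
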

\begin{proof}
\begin{enumerate}
\item[(\ref{GAPOSqqJOzartraegeszmuvelet})]
The set $X_{\tau\geq u}$ is nonempty since by claim~(\ref{ZetaOfIdempotent}) in Proposition~\ref{zeta_1}, $u$ is an element in it.
$X_{\tau\geq u}$ is closed under $\te$ and $\komp$ by Lemma~\ref{tau_lemma}, and hence under $\ite{\te}$, too, by (\ref{eq_quasi_inverse}).
Clearly, $u$ is the unit element of $\te$ over $X_{\tau\geq u}$.
The residual complement operation in $\mathbf X_{\tau\geq u}$ is $\kompM{u}$, since 
by (\ref{eq_quasi_inverse}) and (\ref{dgskjdfhsasfGAPOS}), for $x\in X_{\tau\geq u}$ it holds true that
$\res{\te}{x}{u}=\nega{(\g{x}{\nega{u}})}=\nega{(\g{x}{\bot_{[t]}})}=\negaM{u}{x}$.
Next we prove that  $\mathbf X_{\tau\geq u}$ is involutive, that is, that $\kompM{u}$ is an order reversing involution of $X_{\tau\geq u}$. 
Indeed, if $x\in X_{\tau\geq u}\setminus X_{\tau\geq u}^T$ then 
$\negaM{u}{x}=\nega{x}\in X_{\tau\geq u}\setminus X_{\tau\geq u}^T$ by Lemma~\ref{tau_lemma} and claim~(\ref{hldjskfjgjhdd2}), and hence $\negaM{u}{(\negaM{u}{x})}=x$.
If $x\in X_{\tau\geq u}^T$ then $\negaM{u}{x}=\nega{x}_\Uparrow\in X_{\tau\geq u}^T$ holds by claims~(\ref{hldjskfjgjhdd2}) and (\ref{bottomosLIKE2}) in Proposition~\ref{Kalap2}, and
using claim~(\ref{bottomosLIKE2}) in Proposition~\ref{Kalap2}, it follows that $\negaM{u}{(\negaM{u}{x})}= \nega{(\nega{x}_\Uparrow)}_\Uparrow =x$, so we are done.
Next, $\negaM{u}{u}=\nega{(u_\Downarrow)}=\nega{(\nega{u})}=u$, so $\mathbf X_{\tau\geq u}$ is odd.
Finally, for any idempotent element $c$ such that $c\geq u$, also $c\in\mathbf X_{\tau\geq u}$ holds because of claim~(\ref{ZetaOfIdempotent}) in Proposition~\ref{zeta_1}.
Since $u$ is the smallest strictly positive idempotent element in $X$, the same claim shows that the only positive idempotent element of $\mathbf X$ which is missing from $\mathbf X_{\tau<u}$ is $t$.

\item[(\ref{PriMe})]
{\em Claim:} $\kompM{u}$ is inverse operation on $X_{\tau\geq u}^T$.
Indeed, first
let $x\in X_{\tau\geq u}^{T_c}$.
By (\ref{phvsfkfb}) and Table~\ref{Prods1stCIKK}$_{(3,3)}$, 
$\g{x}{\negaM{u}{x}}=\g{\top_{[v]}}{\negaM{u}{(\top_{[v]})}}=\g{\top_{[v]}}{\nega{((\top_{[v]})_\Downarrow)}}=\g{\top_{[v]}}{\nega{(\bot_{[v]})}}=\g{\top_{[v]}}{\top_{[\nega{v}]}}=\top_{[\g{v}{\nega{v}}]}=\top_{[t]}=u$.
Second, let $x\in X_{\tau\geq u}^{T_{ps}}$.
Then
$\g{x}{\negaM{u}{x}}=\g{x}{\nega{(x_\Downarrow)}}=\g{x}{\nega{(x_\downarrow)}}=\g{x}{\nega{(\g{x}{\nega{u}})}}=\g{x}{(\res{\te}{x}{u})}\leq u$. 
On the other hand, since $x>x_\downarrow$, by residuation and by using that $X$ is a chain, $\g{x}{\negaM{u}{x}}=\g{x}{\nega{(x_\downarrow)}}>t$ holds, and it implies $\g{x}{\negaM{u}{x}}\geq u$, since $\g{x}{\negaM{u}{x}}\in X_{\tau\geq u}$ by Lemma~\ref{tau_lemma}, and $u$ is the smallest element of $X_{\tau\geq u}$, which is greater than $t$.
This settles the claim.

Therefore, $\mathbf X_{\tau\geq u}^T\subseteq(\mathbf X_{\tau\geq u})_\mathbf{gr}$ holds.
If, by contradiction, there would exists $p\in (\mathbf X_{\tau\geq u})_\mathbf{gr}\setminus \mathbf X_{\tau\geq u}^T$ then
its inverse is $\negaM{u}{p}$ by Theorem~\ref{csoportLesz}, and by the claim, $\negaM{u}{p}$ would also be in $(\mathbf X_{\tau\geq u})_\mathbf{gr}\setminus \mathbf X_{\tau\geq u}^T$.
Thus $\g{p}{\negaM{u}{p}}=u\in X_{\tau\geq u}^T$ follows, a contradiction to Table~\ref{ProdsGeneralCase}$_{(4,4)}$. 

As for the second statement, Table~\ref{Prods1stCIKKu'u'NEMu'S}$_{(1,1)}$ shows that $X_{\tau\geq u}^{T_c}$ is closed under $\te$.
(\ref{phvsfkfb}) and the second line of (\ref{circesKOMP}) show that $X_{\tau\geq u}^{T_c}$ is closed under $\kompM{u}$.
Since $\mathbf X_{\tau\geq u}$ is an odd involutive FL$_e$-chain with residual complement $\kompM{u}$, it follows that 
$\res{\te}{x}{y}=\negaM{u}{(\g{x}{\negaM{u}{y}})}$, and hence $X_{\tau\geq u}^{T_c}$ is closed under $\ite{\te}$, too.
\item[(\ref{AzIgazsagPillanata})]
By the previous claim and by claim~(\ref{bottomosLIKE2}) in Proposition~\ref{Kalap2}, 
$$\PLPIV{\left(\mathbf X_{\tau\geq u}\right)}{\left(\mathbf X_{\tau\geq u}^{T_c}\right)}{\overline{\mathbf{ker}_\beta}}$$ is well-defined.
Referring to Lemma~\ref{faktorok}, let $\delta=(\beta|_{ X_{gr}},\zeta)$ denote the embedding $\mathbf X_\mathbf {gr} \hookrightarrow_\nu \beta(\mathbf X_{\mathbf{gr}})\lex\overline{\mathbf{ker}_\beta}$.
Define a mapping $\alpha \ : \ X \to \PLPIV{Z}{V}{\overline{ker_\beta}}$ by
$$
\alpha(x)=\left\{
\begin{array}{ll}
(\top_{\beta(x)},\zeta(x)) 	& \mbox{if $x\in X_{\tau<u}$}\\
(x,u)				& \mbox{if $x\in X_{\tau\geq u}$}\\
\end{array}
\right. .
$$
$\alpha$ is clearly injective.

\end{enumerate}

\begin{itemize}
\item
Denote the monoidal operation of
$\PLPIV{\left(\mathbf X_{\tau\geq u}\right)}{\left(\mathbf X_{\tau\geq u}^{T_c}\right)}{\overline{\mathbf{ker}_\beta}}$
by $\ted=(\te,\te)$.
\\
- If $x,y\in X_{\tau<u}$ then $\g{x}{y}\in X_{\tau<u}$ holds by Lemma~\ref{tau_lemma}, and hence by Table~\ref{Prods1stCIKKu'u'NEMu'S}$_{(1,1)}$ and by using that $\zeta$ is an embedding
$
\gd{\alpha(x)}{\alpha(y)}=
\gd{(\top_{\beta(x)},\zeta(x))}{(\top_{\beta(y)},\zeta(y))}=
(\g{\top_{\beta(x)}}{\top_{\zeta(y)}},\g{\zeta(x)}{\zeta(y)})=
(\top_{\beta(\g{x}{y})},\zeta(\g{x}{y}))=
\alpha(\g{x}{y})
$.
\\
- If $x,y\in X_{\tau\geq u}$ then $\g{x}{y}\in X_{\tau\geq u}$ holds by Lemma~\ref{tau_lemma}, and using the idempotency of $u$,
$
\alpha(\g{x}{y})=
(\g{x}{y},u)=
\gd{(x,u)}{(y,u)}=
\gd{\alpha(x)}{\alpha(y)}$
follows.
\\
- If $x\in X_{\tau<u}$ and $y\in X_{\tau\geq u}$ then $\g{x}{y}\in X_{\tau\geq u}$ holds by Lemma~\ref{tau_lemma}, and hence, using Table~\ref{Prods1stCIKK}$_{(3,4)}$ and that $u$ annihilates all elements in $]\nega{u},u[$ (see Table~\ref{Prods1stCIKK}$_{(2,3)}$), 
$
\alpha(\g{x}{y})=
(\g{x}{y},u)=
(\g{\top_{\beta(x)}}{y},\g{\zeta(x)}{u})=
\gd{(\top_{\beta(x)},\zeta(x))}{(y,u)}=
\gd{\alpha(x)}{\alpha(y)}
$
holds.

Summing up, we have verified $\alpha(\g{x}{y})=\gd{\alpha(x)}{\alpha(y)}$ for all $x,y\in X$.

\item
Denote the residual complement of
$\PLPIV{\left(\mathbf X_{\tau\geq u}\right)}{\left(\mathbf X_{\tau\geq u}^{T_c}\right)}{\overline{\mathbf{ker}_\beta}}$
by $\kompM{\ted}$.
Referring to (\ref{FuraNegaEXT}) and (\ref{circesKOMP}), respectively,
$\kompM{\ted}$ can be written in the following form
\begin{eqnarray*}
\negaM{\ted}{(x,y)} &=&
\left\{
\begin{array}{ll}
(\negaM{u}{x},u) 			& \mbox{if $x\not\in (X_{\tau\geq u})_{gr}$ and $y=u$}\\
((\negaM{u}{x})_\Downarrow,u) 	& \mbox{if $x\in (X_{\tau\geq u})_{gr}$ and $y=u$}\\
(\negaM{u}{x},\nega{y}) 	& \mbox{if $x\in X_{\tau\geq u}^{T_c}$ and $y\in ]\nega{u},u[$}\\
\end{array}
\right. 
\\
&=&
\left\{
\begin{array}{ll}
(\nega{x},u) 			& \mbox{if $x\in X_{\tau\geq u}\setminus X_{\tau\geq u}^T$ and $y=u$}\\
(\nega{x},u) 	& \mbox{if $x\in X_{\tau\geq u}^T$ and $y=u$}\\
(\negaM{u}{x},\nega{y}) 	& \mbox{if $x\in X_{\tau\geq u}^{T_c}$ and $y\in ]\nega{u},u[$}\\
\end{array}
\right. ,
\end{eqnarray*}
that is, 
$$
\negaM{\ted}{(x,y)}=\left\{
\begin{array}{ll}
(\nega{x},u) 			& \mbox{if $y=u$}\\
(\negaM{u}{x},\nega{y}) 	& \mbox{if $y\in ]\nega{u},u[$}\\
\end{array}
\right. .
$$
\\
- If $x\in X_{\tau<u}$ then by Lemma~\ref{tau_lemma}, $\nega{x}\in X_{\tau<u}$, too, and by using (\ref{phvsfkfb}), (\ref{circesKOMP}) and that $\zeta$ is an embedding, it holds true that
$$
\alpha(\nega{x})=
(\top_{\beta(\nega{x})},\zeta(\nega{x}))
=(\nega{\bot_{\beta(x)}},\nega{\zeta(x)})
=(\nega{((\top_{\beta(x)})_\Downarrow)},\nega{\zeta(x)})=
$$
$$
(\negaM{u}{\top_{\beta(x)}},\nega{\zeta(x)})
=\negaM{\ted}{(\top_{\beta(x)},\zeta(x))}
=\negaM{\ted}{\alpha(x)}
.$$
\\
- If $x\in X_{\tau\geq u}$ then by Lemma~\ref{tau_lemma}, $\nega{x}\in X_{\tau\geq u}$, too, and hence
$\alpha(\nega{x})=
(\nega{x},u)=
\negaM{\ted}{(x,u)}=
\negaM{\ted}{\alpha(x)}$.

Summing up, we have verified
$
\alpha(\nega{x})=\negaM{\ted}{\alpha(x)}
$ for all $x\in X$.

\item
Since $\alpha$ preserves multiplication and residual complements, and since both $\mathbf{X}$ and 
$\PLPIV{\left(\mathbf X_{\tau\geq u}\right)}{\left(\mathbf X_{\tau\geq u}^{T_c}\right)}{\overline{\mathbf{ker}_\beta}}$
are involutive, it follows by (\ref{eq_quasi_inverse}) that $\alpha$ preserves residual operation, too, that is, $\alpha(\res{\te}{x}{y})=\res{\ted}{\alpha(x)}{\alpha(y)}$.

\item
Also, $\alpha(t)=(u,t)$ holds, so $\alpha$ preserves the unit element, too.
\end{itemize}
Therefore, $\alpha$ is an embedding of $\mathbf X$ into $\PLPIV{\left(\mathbf X_{\tau\geq u}\right)}{\left(\mathbf X_{\tau\geq u}^{T_c}\right)}{\overline{\mathbf{ker}_\beta}}$, hence into 
$\PLPIV{\left(\mathbf X_{\tau\geq u}\right)}{\left(\mathbf X_{\tau\geq u}^T\right)}{\overline{\mathbf{ker}_\beta}}$
.
Summing up, 
$\mathbf X \cong\left(\PLPIV{\left(\mathbf X_{\tau\geq u}\right)}{\left(\mathbf X_{\tau\geq u}^T\right)}{\overline{\mathbf{ker}_\beta}}\right)_{\mathbf G}$ for some 
$
\mathbf G\leq
\mathbf X_{\tau\geq u}^{T_c}\lex{\overline{\mathbf{ker}_\beta}}
$.
\end{proof}

\section{Group representation}\label{mainsection}
The main theorem of the paper asserts that up to isomorphism, any odd involutive FL$_e$-chain which has only finitely many positive idempotent elements can be built by iterating finitely many times the type I and type II partial sublex product constructions 
using only linearly ordered abelian groups, as building blocks.
\begin{theorem}\label{Hahn_type}
If $\mathbf X$ is an odd involutive FL$_e$-chain, which has only $n\in\mathbb N$, $n\geq 1$ positive idempotent elements then it has a partial sublex product group representation, that is, for $i=2,\ldots,n$
there exist totally ordered abelian groups $\mathbf H_1$, 
$\mathbf H_i$, $\mathbf G_i$, $\mathbf Z_{i-1}$ along with $\iota_i\in\{I,II\}$ such that 
$
\mathbf X\cong\mathbf X_n,
$
where for $i\in\{2,\ldots,n\}$,
\begin{equation}\label{EzABeszed}
\mathbf X_1=\mathbf H_1 \mbox{\ \ and \ } 
\mathbf X_i=
\left\{
\begin{array}{ll}
\PLPIs{\mathbf X_{i-1}}{\mathbf Z_{i-1}}{\mathbf G_i}{\mathbf H_i} & \mbox{ if $\iota_i=I$}\\
\PLPIIs{\mathbf X_{i-1}}{{\mathbf X_{i-1}}_\mathbf{gr}}{\mathbf G_i}{\mathbf H_i} 	& \mbox{ if $\iota_i=II$}\\
\end{array}
\right. .
\end{equation}

Notice that Theorem~\ref{Hahn_type} claims isomorphism between $\mathbf X$ and $\mathbf X_n$ hence $\mathbf X_n$ and consequently for $i=n-1,\ldots, 2$, the $\mathbf X_i$'s are claimed explicitly to exist (to be well defined).
By Definition~B, using that $(\mathbf X_i)_{\mathbf{gr}}=\mathbf H_i$ holds for $i\in\{1,\ldots,n\}$, 
it is necessarily that
\begin{equation}\label{implicite}
\begin{array}{ll}
\mbox{
for $i=2,\ldots,n$, 
$\mathbf Z_{i-1}\leq \mathbf H_{i-1}$,
$\mathbf H_i\leq\mathbf Z_{i-1}\lex\mathbf G_i$ and
}
\\
\mbox{
for $i=2,\ldots,n$, if $\iota_i=II$ then
$
\mathbf H_{i-1}
$ is discretely embedded into $\mathbf X_{i-1}$.
}
\end{array}
\end{equation}
%
%
%
%
%
%
\end{theorem}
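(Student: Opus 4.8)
The plan is to induct on $n$, the number of positive idempotent elements of $\mathbf X$; the two decomposition lemmas, Lemma~\ref{GammaHomo} and Lemma~\ref{decompXtaugeq}, carry out the inductive step, so the argument is in essence one of bookkeeping. For the base case $n=1$, the equivalence of claims~(\ref{jsjhsgsjdb}) and~(\ref{fosdgs}) in Theorem~\ref{csoportLesz} shows that $\mathbf X$ is already a linearly ordered abelian group, so I set $\mathbf H_1=\mathbf X_1=\mathbf X$ and there is nothing to construct.

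For the inductive step assume $n\geq2$. Since $\mathbf X$ has finitely many positive idempotents it has a smallest strictly positive idempotent $u>t$, and I split according to whether $\nega{u}$ is idempotent. If it is, Lemma~\ref{GammaHomo} produces the chain $\mathbf Y=\gamma(\beta(\mathbf X))$, a subgroup $\mathbf Z\leq\mathbf Y$, and an isomorphism $\mathbf X\cong\PLPIs{\mathbf Y}{\mathbf Z}{\overline{\mathbf{ker}_\beta}}{\mathbf G}$; by claim~(\ref{GAMMA<_Xx6}) of that lemma the positive idempotents of $\mathbf Y$ are order isomorphic to those of $\mathbf X$ with $t$ deleted, so $\mathbf Y$ has exactly $n-1$ of them. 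If $\nega{u}$ is not idempotent, Lemma~\ref{decompXtaugeq} plays the identical role: it yields the chain $\mathbf X_{\tau\geq u}$ (again with $n-1$ positive idempotents, by claim~(\ref{GAPOSqqJOzartraegeszmuvelet})), whose group part $\mathbf X_{\tau\geq u}^T$ is discretely embedded in it by Proposition~\ref{Kalap2}(\ref{bottomosLIKE2}), together with an isomorphism $\mathbf X\cong\PLPIIs{(\mathbf X_{\tau\geq u})}{\mathbf X_{\tau\geq u}^T}{\overline{\mathbf{ker}_\beta}}{\mathbf G}$.

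In either case I apply the induction hypothesis to the smaller chain, say $\mathbf W$ (so $\mathbf W=\mathbf Y$ or $\mathbf W=\mathbf X_{\tau\geq u}$), obtaining a recursive representation $\mathbf X_1,\dots,\mathbf X_{n-1}$ with $\mathbf X_{n-1}\cong\mathbf W$, and then append one final step. I take $\mathbf G_n=\overline{\mathbf{ker}_\beta}$ and $\mathbf H_n=\mathbf G$ in both cases, together with $\iota_n=I$, $\mathbf Z_{n-1}=\mathbf Z$ in the first case and $\iota_n=II$, $\mathbf Z_{n-1}=(\mathbf X_{n-1})_\mathbf{gr}$ in the second; the resulting $\mathbf X_n$ is precisely the required partial sublex product. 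The routine checks that remain are that every building block is a totally ordered abelian group---$\overline{\mathbf{ker}_\beta}$ is the divisible hull of the convex subgroup $\mathbf{ker}_\beta$ of $\mathbf X_\mathbf{gr}$ (claims~(\ref{Akozepe}),(\ref{lexikoS}) of Lemma~\ref{faktorok}, and Lemma~A), $\mathbf G$ is a subgroup of a lexicographic product of such groups, and $\mathbf Z$ is a subgroup by claim~(\ref{GAMMA<_X1}) of Lemma~\ref{GammaHomo}---and that the group-part identity $(\mathbf X_i)_\mathbf{gr}=\mathbf H_i$ propagates, which is immediate since Definition~B replaces the entire group part of the first argument by $\mathbf H_i$. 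This identity is exactly what keeps the next sublex product well posed (and, when $\iota=II$, supplies the discreteness hypothesis of item~B of Definition~\ref{FoKonstrukcio}); it also forces the constraints gathered in~(\ref{implicite}).

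The one genuinely delicate point, which I would treat with care, is the transport of structure across the induction hypothesis. The auxiliary groups $\mathbf Z$ and $\mathbf G$ are defined as concrete subalgebras living \emph{inside} the smaller chain---for instance $\mathbf Z$ sits over $\gamma(\beta(X_{\tau\geq u}^E))\leq\mathbf Y$---whereas the induction hypothesis only delivers an abstract isomorphism $\varphi\colon\mathbf W\to\mathbf X_{n-1}$. I would therefore transport $\mathbf Z$ and $\mathbf G$ along $\varphi$ to subgroups of $\mathbf X_{n-1}$ and then invoke the fact that the partial sublex product of Definition~B depends only on the isomorphism type of its inputs: an isomorphism of first arguments carrying the distinguished subgroups onto one another extends coordinatewise to an isomorphism of the two sublex products. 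Granting this functoriality, $\mathbf X\cong(\text{sublex over }\mathbf W)\cong(\text{sublex over }\mathbf X_{n-1})=\mathbf X_n$, which closes the induction; as each step removes exactly one positive idempotent, the recursion terminates after $n-1$ steps at the group $\mathbf H_1$.
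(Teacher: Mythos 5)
Your proof is correct and follows essentially the same route as the paper's: induction on the number of positive idempotents, with the base case handled by Theorem~\ref{csoportLesz} and the inductive step split according to whether $\nega{u}$ is idempotent, invoking Lemma~\ref{GammaHomo} (type I) and Lemma~\ref{decompXtaugeq} (type II) respectively, with $\mathbf G_k=\overline{\mathbf{ker}_\beta}$ in both cases. Your explicit treatment of transporting the subgroups $\mathbf Z$ and $\mathbf G$ along the isomorphism supplied by the induction hypothesis, and of the construction's invariance under such isomorphisms, is a point the paper leaves implicit, but it does not alter the substance of the argument.
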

\begin{proof}
Let ${\mathbf X}=( X, \leq, \te, \ite{\te}, t, f )$.
Induction by $n$, the number of idempotent elements in $X^+$.
If $n=1$ then the only idempotent in $X^+$ is $t$, hence Theorem~2.4 implies that $( X, \leq, \te, t)$ is a linearly ordered abelian group $\mathbf H_1$ and we are done.
Assume that the theorem holds up to $k-1$ (for some $2\leq k<n$), and let $\mathbf X$ be an odd involutive FL$_e$-chain which has $k$ positive idempotent elements.
Since the number of idempotents in $X^+$ is finite, there exists $u$, the smallest idempotent above $t$.

If $\nega{u}$ is idempotent then by Lemma~9.4 (by denoting $\alpha=\gamma\circ\beta$) 
$$
\mathbf X\cong \PLPIs{\alpha(\mathbf X)}{\alpha\left(\mathbf X_{\tau\geq u}^E\right)}{\overline{\mathbf{ker}_\beta}}{\mathbf H_i}
$$
holds for some 
$
\mathbf H_i\leq
\alpha\left(\mathbf X_{\tau\geq u}^{E_c}\right)\lex\overline{\mathbf{ker}_\beta},
$
where 
$\alpha\left(\mathbf X_{\tau\geq u}^{E_c}\right)\leq\alpha\left(\mathbf X_{\tau\geq u}^E\right)$ are subgroups of the odd involutive FL$_e$-chain $\alpha(\mathbf X)$, and 
$\mathbf{ker}_\beta$ is a linearly ordered abelian group.
Therefore, if $\nega{u}$ is idempotent then set
$$
\mbox{\small
$\mathbf X_{k-1}=\alpha(\mathbf X)$,
$\mathbf Z_{k-1}=\alpha\left(\mathbf X_{\tau\geq u}^E\right)$,
$\mathbf G_k=\overline{\mathbf{ker}_\beta}$, and
$\iota_k=I$.
}
$$
If $\nega{u}$ is not idempotent then by Lemma~10.2
$$
\mathbf X \cong \PLPIIs{\left(\mathbf X_{\tau\geq u}\right)}{\left(\mathbf X_{\tau\geq u}^T\right)}{\overline{\mathbf{ker}_\beta}}{\mathbf H_i}
$$
holds for some 
$
\mathbf H_i\leq
\mathbf X_{\tau\geq u}^{T_c}\lex{\overline{\mathbf{ker}_\beta}}
$,
where 
$\mathbf X_{\tau\geq u}^{T_c}\leq\mathbf X_{\tau\geq u}^T$ are subgroups of the odd involutive FL$_e$-chain $\mathbf X_{\tau\geq u}$, and $\mathbf{ker}_\beta$ is a linearly ordered abelian group.
Therefore, if $\nega{u}$ is not idempotent then set
$$
\mbox{
$\mathbf X_{k-1}=\mathbf X_{\tau\geq u}$,
$\mathbf G_k=\overline{\mathbf{ker}_\beta}$, and
$\iota_k=II$.
}
$$
By Lemmas~9.4 and 10.2 the number of positive idempotent elements of $\mathbf X_{k-1}$ (be it equal to either $\alpha(\mathbf X)$ or $\mathbf X_{\tau\geq u}$) is one less than that of $\mathbf X$.
Therefore, by the induction hypothesis, the theorem holds for $\mathbf X_{k-1}$, that is, 
there exist linearly ordered abelian groups 
$\mathbf H_1$ and 
for $i=2,\ldots,k-1$, linearly ordered abelian groups
$\mathbf H_i$, $\mathbf G_i$, $\mathbf Z_{i-1}$ along with $\iota_i\in\{I,II\}$ such that $\mathbf X_1:=\mathbf H_1$ and for $i\in\{2,\ldots,k-1\}$, (11.1) holds.
\end{proof}

\begin{remark}
Note that by Lemmas~\ref{GammaHomo} and \ref{decompXtaugeq}, and claim~(\ref{jsjhsgsjdb}) in Theorem~\ref{csoportLesz}, linearly ordered abelian groups are exactly the indecomposable algebras with respect to the type I and type II partial sublex product constructions.
\end{remark}

\begin{remark}
Denote the one-element odd involutive FL$_e$-algebra by $\mathbf 1$.
If the algebra in Theorem~\ref{Hahn_type} is bounded then 
in its group representation $\mathbf G_1=\mathbf 1$, since all other linearly ordered groups are infinite and unbounded, and 
both type I and type II constructions preserve boundedness of the first component.
\end{remark}

\begin{remark} {\bf (A unified treatment)}
The reader might think that entirely different things are going in Sections~\ref{OneDecIdemp} and \ref{OneStepNotIdemp}, that is, depending on whether the residual complement of the smallest strictly positive idempotent element $u$ is idempotent or not.
It is not the case. In both cases a particular nuclear retract\footnote{For notions which are not defined here, the reader may consult e.g. \cite{gjko}.} plays a key role: 
Let ${\mathbf X}=( X, \leq, \te, \ite{\te}, t, f )$ be an odd involutive FL$_e$-chain with residual complement $\komp$, and $u$ be the (existing) smallest strictly positive idempotent element of $X$.
Let $\varphi : X\to X$ be defined by $$\varphi(x)=\nega{(\g{\nega{(\g{x}{\nega{u}})}}{\nega{u}})}.$$
The interested reader might wish to verify using what has already been proven in this paper that 
\begin{enumerate}
\item 
$\varphi$ is a nucleus of $\mathbf X$. 
\item 
The related nuclear retract $\mathbf X_\varphi$ over $X_\varphi=\{\varphi(x):x\in X\}$ is isomorphic to $\mathbf Y$ of Lemma~\ref{GammaHomo} if $\nega{u}$ is idempotent, and (not only isomorphic, but also) equal to $\mathbf X_{\tau\geq u}$ of Lemma~\ref{decompXtaugeq} if $\nega{u}$ is not idempotent.
\item
There is a subgroup\footnote{We mean cancellative subalgebra, see Theorem~\ref{csoportLesz}.} $\mathbf X_1$ of $\mathbf X_\varphi$ over $X_1=\{x\in X_\varphi:\g{x}{\nega{u}<x}\}$.
\item
There is a subgroup $\mathbf X_2$ of $\mathbf X_1$ over $X_2=\varphi(X\setminus(X_\varphi\cup\nega{(X_\varphi)}))$.
\item
There is a subalgebra $\mathbf u$ of $\mathbf X$ over $]\nega{u},u[$, and 
there is a subalgebra $\mathbf{ker}_\varphi$ of the residuated semigroup reduct of $\mathbf X$ over $\ker_\varphi=\{x\in X : \varphi(x)=u\}$.
$\ker_\varphi=[\nega{u},u]$ if $\nega{u}$ is idempotent, and 
$\ker_\varphi=]\nega{u},u]$ if $\nega{u}$ is not idempotent\footnote{$\mathbf{ker}_\varphi$ is isomorphic to $\mathbf u^{\top\bot}$  if $\nega{u}$ is idempotent, and isomorphic to $\mathbf u^\top$ if $\nega{u}$ is not idempotent, where $\mathbf u^{\top\bot}$ and $\mathbf u^\top$ mean $\mathbf u$ equipped with a new top and bottom element 
or 
$\mathbf u$ equipped with a new top element, respectively, just like in items A and B in Definition~\ref{FoKonstrukcio}.}.
\end{enumerate}

One can recover $\mathbf X$ as follows:
\begin{enumerate}
\item
Enlarge\footnote{Throughout this example enlargement is meant in the sense of Definition~\ref{LexRendDefi}.} the subset $X_2$ of $X_\varphi$ by $\overline{\ker_\varphi}$\footnote{Here we can even save the amendment of the second algebra by top or top and bottom elements, compared to the definition of the partial lexicographic products.}.
\item 
Enlarge the subset $X_1\setminus X_2$ by $\overline{\ker_\varphi}\setminus\,]\nega{u},u[$.
\item 
Enlarge the subset $X_\varphi\setminus X_1$ of $X_\varphi$ by $\{\nega{\varphi(\nega{u})}\}$. 
\item 
Replace the group-part of the obtained algebra by a subgroup of it. 
\end{enumerate}
Equip the obtained set by the lexicographic ordering, and define the mo\-no\-idal operation on it coordinatewise.
It has then $(u,t)$ as its unit element, it is residuated and thus it determines a unique residual operation. 
Then $\mathbf X$ embeds to the here-constructed algebra.
We believe that this unified treatment, although more elegant, does not make the discussion any shorter or any more transparent.

\end{remark}

\bigskip
We say that an odd involutive FL$_e$-chain $\mathbf X$ can be represented as a {\em finite} (or more precisely, $n$-ary) partial sublex product of linearly ordered abelian groups, if $\mathbf X$ arises via finitely many iterations of the type I and type II constructions using linearly ordered abelian groups in the way it is described in Theorem~\ref{EzABeszed}. 
By using this terminology we may rephrase Theorem~\ref{EzABeszed}: 
Every odd involutive FL$_e$-chain, which has only finitely many positive idempotent elements can be represented as a finite partial sublex product of linearly ordered abelian groups.
By Hahn's theorem one can embed the linearly ordered abelian groups into some lexicographic products of real groups.
Therefore, a side result of Theorem~\ref{EzABeszed} is the generalization of Hahn's embedding theorem from the class of linearly ordered abelian groups to the class of odd involutive FL$_e$-chains which have only finitely many positive idempotent elements:

\begin{corollary}\label{coroHAHN} 
Odd involutive FL$_e$-chains which have exactly $n\in\mathbb N$ positive idempotent elements embed in some $n$-ary partial sublex product of lexicographic products of real groups.
\end{corollary}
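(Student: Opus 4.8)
The plan is to combine the structural representation of Theorem~\ref{Hahn_type} with Hahn's classical theorem via a monotonicity property of the partial sublex product construction. First I would invoke Theorem~\ref{Hahn_type}: since $\mathbf X$ has exactly $n$ positive idempotents, $\mathbf X\cong\mathbf X_n$, where $\mathbf X_n$ is built from the linearly ordered abelian groups $\mathbf H_1$ and $\mathbf G_2,\dots,\mathbf G_n$ (together with the auxiliary subgroups $\mathbf Z_{i-1},\mathbf H_i$) by the $n-1$ iterations in (\ref{EzABeszed}). By Hahn's theorem, each of these building-block groups embeds, as an ordered abelian group, into a lexicographic product of real groups (its Hahn product). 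The corollary will follow once these embeddings of the building blocks are lifted, step by step, to an embedding of the whole $n$-ary construction.

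The core of the argument is a monotonicity lemma for the construction: if $\phi\colon\mathbf A\hookrightarrow\mathbf A'$ is an embedding of odd involutive FL$_e$-chains and $\psi\colon\mathbf G\hookrightarrow\mathbf G'$ an embedding of linearly ordered abelian groups, then, upon choosing the target subgroups to be the $\phi,\psi$-images, the pair $(\phi,\psi)$ induces an embedding of the type~I (resp.\ type~II) partial sublex products, e.g.\ $\PLPIs{\mathbf A}{\mathbf Z}{\mathbf G}{\mathbf H}\hookrightarrow\PLPIs{\mathbf A'}{\mathbf Z'}{\mathbf G'}{\mathbf H'}$. The induced map is defined coordinatewise on the universe $A_H=H\cup(Z\times\{\top\})\cup(X\times\{\bot\})$ of (\ref{EgyszerubbUnik}) (and $B_H=H\cup(X\times\{\top\})$ in the type~II case), sending the $H$-layer through the induced embedding of $\mathbf Z\lex\mathbf G$ into $\mathbf Z'\lex\mathbf G'$ and the extremal layers by $(x,\top)\mapsto(\phi(x),\top)$ and $(x,\bot)\mapsto(\phi(x),\bot)$.

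I would then apply this lemma inductively along the tower (\ref{EzABeszed}). Assuming $\mathbf X_{i-1}$ embeds into an $(i-1)$-ary partial sublex product $\mathbf X'_{i-1}$ of lexicographic products of real groups, via an embedding $\phi_{i-1}$ respecting the group part and the relevant subgroups, I would use Hahn to embed $\mathbf G_i$ into a real-group lexicographic product $\mathbf G'_i$, form $\mathbf X'_i$ by the same type $\iota_i\in\{I,II\}$ construction over $\mathbf X'_{i-1}$ and $\mathbf G'_i$ (taking $\mathbf Z'_{i-1}:=\phi_{i-1}(\mathbf Z_{i-1})$ and $\mathbf H'_i$ the image of $\mathbf H_i$), and invoke the lemma to extend to $\phi_i\colon\mathbf X_i\hookrightarrow\mathbf X'_i$. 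Since taking images preserves $\mathbf Z'_{i-1}\leq\mathbf X'_{i-1}{}_{\mathbf{gr}}$ and $\mathbf H'_i\leq_\nu\mathbf Z'_{i-1}\lex\mathbf G'_i$, the side conditions (\ref{implicite}) and (\ref{HHKHfelesleges}) remain in force, so each $\mathbf X'_i$ is a legitimate partial sublex product. After $n$ steps this exhibits $\mathbf X\cong\mathbf X_n$ inside an $n$-ary partial sublex product of lexicographic products of real groups.

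The main obstacle is the verification of the monotonicity lemma, and within it the bookkeeping of the side conditions rather than order-preservation, which is immediate since the lexicographic order is respected coordinatewise. Concretely, one must check that the coordinatewise map preserves $\te$, $\komp$, and hence $\ite{\te}$, across all case splits of Definition~\ref{FoKonstrukcio} and Definition~B. The key enabling fact is that an FL$_e$-embedding reflects invertibility: since $\phi$ preserves $\te$, $t$ and $\komp$, one has $x\in X_{gr}$ iff $\phi(x)\in X'_{gr}$, so $\phi(X_{gr})=\phi(X)\cap X'_{gr}$ and the assignment of a point to the group layer versus the $\top/\bot$ layers transports correctly; this also gives injectivity and the disjointness of the three image layers. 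For the type~II steps one must additionally confirm that the discrete-embedding hypothesis of (\ref{implicite}) is preserved under $\phi_{i-1}$, so that the clause of (\ref{FuraNegaEXT}) using $(\cdot)_\downarrow$ behaves compatibly with the embedding. These are numerous but routine coordinatewise checks; none introduces a genuine difficulty once reflection of the group part and the stability of the subgroup constraints under taking images are established.
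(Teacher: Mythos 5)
Your top-level strategy---Theorem~\ref{Hahn_type} combined with Hahn's theorem, lifted step by step through the tower (\ref{EzABeszed})---is exactly the paper's own (very brief) justification of Corollary~\ref{coroHAHN}, so everything hinges on the ``monotonicity lemma'' you add to make the lifting precise. That lemma is fine for type~I steps (the type~I complement involves no predecessor operation, and embeddings reflect invertibility, as you note), but its type~II half is false, and the checks you dismiss as routine are precisely where the argument breaks. First, well-definedness: a type~II step over $\mathbf X'_{i-1}$ requires the group part of $\mathbf X'_{i-1}$ to be discretely embedded into it (item~B of Definition~\ref{FoKonstrukcio} and (\ref{implicite})), and this property is \emph{destroyed}, not preserved, when the building blocks are replaced by lexicographic products of real groups, since such groups are densely ordered and no element of them has a cover. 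The failure already occurs at the base of your induction: if $\iota_2=II$ then $\mathbf H_1$ must be discretely ordered, as for $\mathbf X=\PLPII{\pmb{\mathbb Z}}{\pmb{\mathbb R}}$ of Example~\ref{kiveteles2}, where $\mathbf H_1\cong\pmb{\mathbb Z}$; Hahn replaces $\pmb{\mathbb Z}$ by $\pmb{\mathbb R}$, and no type~II construction over $\pmb{\mathbb R}$ exists at all. Second, complement preservation: by (\ref{FuraNegaEXT}) the complement of $(x,\top)$ with $x$ in the group part is $((\negaM{\ast}{x})_\downarrow,\top)$, and embeddings of chains need not preserve predecessors---e.g.\ $n\mapsto(n,0)$ embeds $\pmb{\mathbb Z}$ into $\pmb{\mathbb Z}\lex\pmb{\mathbb Z}$ yet sends the predecessor $-1$ of $0$ to $(-1,0)\neq(0,0)_\downarrow=(0,-1)$---so even when the target is defined, your coordinatewise map does not commute with $\komp$.

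This is not a bookkeeping defect that a cleverer choice of $\mathbf Z'_{i-1}$, $\mathbf H'_i$ could repair. Take $\mathbf X=\PLPII{\pmb{\mathbb Z}}{\pmb{\mathbb R}}$, which has exactly two positive idempotents, $t=(0,0)$ and $u=(0,\top)$, and in which $\nega{u}=(-1,\top)$ is \emph{not} idempotent. Any $2$-ary partial sublex product whose building blocks are lexicographic products of real groups must use type~I at its single construction step (type~II over a densely ordered first component being undefined), and in any such type~I algebra the unique strictly positive idempotent is $(t',\top)$, where $t'$ is the unit of the first component, whose complement $(t',\bot)$ \emph{is} idempotent. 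An embedding preserves $\te$, the constants, and hence $\komp$, so it would force $\nega{u}$ to be idempotent---a contradiction. Hence for towers containing type~II steps the lifting requires a genuinely different idea than coordinatewise transport (and the corollary itself a more careful formulation than either your argument or the paper's sketch supplies); identifying and resolving this discrete-versus-dense obstruction is the actual mathematical content missing from your proposal.
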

Finally, we remark that ordinary lexicographic products can be used instead of partial sublex products if the less ambitious goal of embedding only the monoidal reduct is aimed at.

\begin{corollary}\label{coroHAHN2} 
The monoid reduct of any odd involutive FL$_e$-chain which has only finitely many idempotent elements embeds in the finite lexicographic product $\mathbf H_1\lex\mathbf G_2^{\top\bot}\lex\ldots\lex\mathbf G_n^{\top\bot}$, where $\mathbf H_1,  \mathbf G_2, \ldots,\mathbf G_n$ are the linearly ordered abelian groups of its group representation.
\end{corollary}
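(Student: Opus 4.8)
The plan is to derive this purely from the group representation of Theorem~\ref{Hahn_type}, by throwing away the involutive structure and keeping only the ordered monoid $(X,\le,\te,t)$. First I would remark that having finitely many idempotents forces finitely many \emph{positive} idempotents (the latter form a subset of the former), so Theorem~\ref{Hahn_type} applies and gives $\mathbf X\cong\mathbf X_n$ with $\mathbf X_1=\mathbf H_1$ and each $\mathbf X_i$ built from $\mathbf X_{i-1}$ and $\mathbf G_i$ by a type~I or type~II partial sublex product, as in~(\ref{EzABeszed}). The whole problem then reduces to a single-step claim: \emph{the monoid reduct of $\mathbf X_i$ is a subalgebra of the ordinary lexicographic product $\mathbf X_{i-1}\lex\mathbf G_i^{\top\bot}$}; after that I would simply compose the steps.

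The crux is the single-step claim, and the one fact that makes it work is that $\top$ and $\bot$ are \emph{annihilators} in $\mathbf G_i^{\top\bot}$. By Definition~B the partial sublex product $\mathbf X_i$ is a subalgebra of the associated type~I (resp.\ type~II) partial lex product, so it suffices to treat the latter. In the type~I case the product table established in the proof of Theorem~\ref{SubLexiTheo} is coordinatewise when both first coordinates lie in $\mathbf Z_{i-1}$ and returns $\bot$ in the second slot otherwise; but whenever a first coordinate lies outside $\mathbf Z_{i-1}$ its only admissible second coordinate is already $\bot$, and since $\bot$ annihilates in $\mathbf G_i^{\top\bot}$ the plain coordinatewise product of $\mathbf X_{i-1}\lex\mathbf G_i^{\top\bot}$ returns $\bot$ there too. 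Hence the monoidal operation of the partial lex product is exactly the restriction of the coordinatewise operation of $\mathbf X_{i-1}\lex\mathbf G_i^{\top\bot}$, its order is the restriction of the lexicographic order, and the unit is preserved. The type~II case is the same except that only a top is adjoined (per Definition~\ref{FoKonstrukcio}), so the second coordinates range over $\mathbf G_i^{\top}$; as $\mathbf G_i^{\top}$ is a submonoid of $\mathbf G_i^{\top\bot}$ with $\bot$ merely unused, the monoid reduct of $\mathbf X_i$ again embeds into $\mathbf X_{i-1}\lex\mathbf G_i^{\top\bot}$. I would stress that this succeeds \emph{only} for the monoid reduct: the residual complement $x\mapsto\nega{x}$ of the partial product is genuinely not the restriction of any coordinatewise complement on the full lexicographic product, which is precisely the obstruction that forced the \emph{partial} constructions in Theorem~\ref{Hahn_type}, and is exactly why this corollary is confined to the monoid reduct.

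Finally I would assemble the single-step embeddings into a tower. Two routine facts suffice: the lexicographic product of ordered commutative monoids is associative, and if $\mathbf A$ is a subalgebra of $\mathbf A'$ then $\mathbf A\lex\mathbf B$ is a subalgebra of $\mathbf A'\lex\mathbf B$ (the inclusion acts on the first coordinate). Applying the single-step claim repeatedly and using associativity yields
\[
\mathbf X_n\hookrightarrow\mathbf X_{n-1}\lex\mathbf G_n^{\top\bot}\hookrightarrow\mathbf X_{n-2}\lex\mathbf G_{n-1}^{\top\bot}\lex\mathbf G_n^{\top\bot}\hookrightarrow\cdots\hookrightarrow\mathbf H_1\lex\mathbf G_2^{\top\bot}\lex\cdots\lex\mathbf G_n^{\top\bot},
\]
and since $\mathbf X\cong\mathbf X_n$ the monoid reduct of $\mathbf X$ embeds as claimed. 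The only genuinely delicate point is the annihilator verification of the previous paragraph; everything else is the bookkeeping of composing embeddings through a lexicographic tower, for which associativity and the subalgebra lifting lemma are all that is needed.
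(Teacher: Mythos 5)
Your proposal is correct and follows essentially the same route as the paper's own proof: pass from the partial sublex product to the partial lex product via Definition~B, observe that the monoid reduct of each partial lex product sits coordinatewise inside $\mathbf X_{i-1}\lex\mathbf G_i^{\top\bot}$ (your annihilator argument is exactly the content of the paper's one-line "observe" step, spelled out), and then iterate through the group representation. The extra details you supply (the positive-idempotent remark, the explicit treatment of both types, and the associativity/subalgebra-lifting bookkeeping) are correct refinements of the paper's terser argument, not a different method.
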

\begin{proof}
By Definition,
$\PLPIs{\mathbf X}{\mathbf Z}{\mathbf G}{\mathbf H}\leq\PLPI{\mathbf X}{\mathbf Z}{\mathbf G}$ and
$\PLPIIs{\mathbf X}{{\mathbf X}_\mathbf{gr}}{\mathbf G}{\mathbf H}
\leq\PLPII{\mathbf X}{\mathbf G}$
hold.
Observe that the monoidal reduct of 
$\PLPII{\mathbf X}{\mathbf G}$
embeds into
the monoidal reduct of 
$\mathbf X\lex{\mathbf G}^{\top\bot}$. 
Now let an odd involutive FL$_e$-chain, which has only finitely many idempotent elements, be given.
Take its group representation.
Guided by its consecutive iterative steps, in each step consider $(\mathbf X_{i-1})\lex{\mathbf G_i}^{\top\bot}$
instead of $\PLPIs{\mathbf X_{i-1}}{\mathbf Z_{i-1}}{\mathbf G_i}{\mathbf H_i}$ or $\PLPIIs{\mathbf X_{i-1}}{{\mathbf X_{i-1}}_\mathbf{gr}}{\mathbf G_i}{\mathbf H_i}$.
In the end, this results in the original algebra being embedded into the lexicographic product 
$\mathbf H_1\lex\mathbf G_2^{\top\bot}\lex\ldots\lex\mathbf G_n^{\top\bot}$,
where
$\mathbf H_1,  \mathbf G_2, \ldots,\mathbf G_n$ are linearly ordered abelian groups.
\end{proof}

\section*{Acknowledgement}
The present scientific contribution was supported by the GINOP 2.3.2-15-2016-00022 grant
and the Higher Education Institutional Excellence Programme 20765-3/2018/FEKUTSTRAT of the Ministry of Human Capacities in Hungary.



\end{document}